\documentclass[12pt]{article}
\usepackage{amssymb,bm}
\usepackage{amsmath}
\usepackage{amsthm}
\usepackage{graphicx}
\usepackage{hyperref} 
\hypersetup{pdfborder=0 0 0}
\setlength{\oddsidemargin}{0in}
\setlength{\evensidemargin}{-0.0625in}
\setlength{\textwidth}{6.5in}
\setlength{\topmargin}{-.5in}
\setlength{\textheight}{8.8in}


\newtheorem{theorem}{{\sc Theorem}}[section]

\newtheorem{lemma}[theorem]{{\sc Lemma}}
\newtheorem{corollary}[theorem]{Corollary}
\newtheorem{remark}[theorem]{Remark}

\newtheorem{conjecture}[theorem]{Conjecture}

\newcommand{\bb}[1]{\mathbb{ #1}}

\bmdefine\Bone{1}

\newcommand{\weak}{\rightharpoonup\:}

\newcommand{\bra}[1]{\overline{#1}}

\newcommand{\hf}{\displaystyle\frac{1}{2}}
\newcommand{\nth}[1]{\displaystyle\frac{1}{#1}}

\newcommand{\Md}{\partial}


\def\XXint#1#2#3{{\setbox0=\hbox{$#1{#2#3}{\int}$ }
\vcenter{\hbox{$#2#3$ }}\kern-.6\wd0}}

\newcommand{\jump}[1]{\lbrack\!\lbrack #1 \rbrack\!\rbrack}

\newcommand{\lims}{\mathop{\overline\lim}}
\newcommand{\limi}{\mathop{\underline\lim}}

\newcommand{\sech}{\mathop{\mathrm{sech}}}


\newcommand{\rhs}{right-hand side}
\newcommand{\lhs}{left-hand side}

\newcommand{\WLOG}{without loss of generality}

\newcommand{\IFF}{if and only if }


\newcommand{\Ga}{\alpha}
\newcommand{\Gb}{\beta}
\newcommand{\Gd}{\delta}
\newcommand{\Ge}{\epsilon}

\newcommand{\Gg}{\gamma}

\newcommand{\Gl}{\lambda}

\newcommand{\Gth}{\theta}

\newcommand{\Gz}{\zeta}

\newcommand{\GG}{\Gamma}

\newcommand{\GO}{\Omega}

\bmdefine\BGa{\alpha}
\bmdefine\BGb{\beta}
\bmdefine\BGd{\delta}
\bmdefine\BGe{\epsilon}
\bmdefine\BGve{\varepsilon}
\bmdefine\BGf{\phi}
\bmdefine\BGvf{\varphi}
\bmdefine\BGg{\gamma}
\bmdefine\BGc{\chi}
\bmdefine\BGi{\iota}
\bmdefine\BGk{\kappa}
\bmdefine\BGl{\lambda}
\bmdefine\BGn{\eta}
\bmdefine\BGm{\mu}
\bmdefine\BGv{\nu}
\bmdefine\BGp{\pi}
\bmdefine\BGth{\theta}
\bmdefine\BGvth{\vartheta}
\bmdefine\BGr{\rho}
\bmdefine\BGvr{\varrho}
\bmdefine\BGs{\sigma}
\bmdefine\BGvs{\varsigma}
\bmdefine\BGt{\tau}
\bmdefine\BGj{\tau}
\bmdefine\BGu{\upsilon}
\bmdefine\BGo{\omega}
\bmdefine\BGx{\xi}
\bmdefine\BGy{\psi}
\bmdefine\BGz{\zeta}
\bmdefine\BGD{\Delta}
\bmdefine\BGF{\Phi}
\bmdefine\BGG{\Gamma}
\bmdefine\BGL{\Lambda}
\bmdefine\BGP{\Pi}
\bmdefine\BGT{\Theta}
\bmdefine\BGS{\Sigma}
\bmdefine\BGU{\Upsilon}
\bmdefine\BGO{\Omega}
\bmdefine\BGX{\Xi}
\bmdefine\BGY{\Psi}

\bmdefine\BFM{\mathfrak{M}}
\bmdefine\BFb{\mathfrak{b}}
\bmdefine\BFk{\mathfrak{k}}
\bmdefine\BFm{\mathfrak{m}}
\bmdefine\BFu{\mathfrak{u}}
\bmdefine\BFv{\mathfrak{v}}



\newcommand{\CK}{{\mathcal K}}

\newcommand{\CM}{{\mathcal M}}

\newcommand{\CR}{{\mathcal R}}

\bmdefine\BCA{{\mathcal A}}
\bmdefine\BCB{{\mathcal B}}
\bmdefine\BCC{{\mathcal C}}
\bmdefine\BCD{{\mathcal D}}
\bmdefine\BCE{{\mathcal E}}
\bmdefine\BCF{{\mathcal F}}
\bmdefine\BCG{{\mathcal G}}
\bmdefine\BCH{{\mathcal H}}
\bmdefine\BCI{{\mathcal I}}
\bmdefine\BCJ{{\mathcal J}}
\bmdefine\BCK{{\mathcal K}}
\bmdefine\BCL{{\mathcal L}}
\bmdefine\BCM{{\mathcal M}}
\bmdefine\BCN{{\mathcal N}}
\bmdefine\BCO{{\mathcal O}}
\bmdefine\BCP{{\mathcal P}}
\bmdefine\BCQ{{\mathcal Q}}
\bmdefine\BCR{{\mathcal R}}
\bmdefine\BCS{{\mathcal S}}
\bmdefine\BCT{{\mathcal T}}
\bmdefine\BCU{{\mathcal U}}
\bmdefine\BCV{{\mathcal V}}
\bmdefine\BCW{{\mathcal W}}
\bmdefine\BCX{{\mathcal X}}
\bmdefine\BCY{{\mathcal Y}}
\bmdefine\BCZ{{\mathcal Z}}

\bmdefine\Bzr{ 0}
\bmdefine\Ba{ a}
\bmdefine\Bb{ b}
\bmdefine\Bc{ c}
\bmdefine\Bd{ d}
\bmdefine\Be{ e}
\bmdefine\Bf{ f}
\bmdefine\Bg{ g}
\bmdefine\Bh{ h}
\bmdefine\Bi{ i}
\bmdefine\Bj{ j}
\bmdefine\Bk{ k}
\bmdefine\Bl{ l}
\bmdefine\Bm{ m}
\bmdefine\Bn{ n}
\bmdefine\Bo{ o}
\bmdefine\Bp{ p}
\bmdefine\Bq{ q}
\bmdefine\Br{ r}
\bmdefine\Bs{ s}
\bmdefine\Bt{ t}
\bmdefine\Bu{ u}
\bmdefine\Bv{ v}
\bmdefine\Bw{ w}
\bmdefine\Bx{ x}
\bmdefine\By{ y}
\bmdefine\Bz{ z}
\bmdefine\BA{ A}
\bmdefine\BB{ B}
\bmdefine\BC{ C}
\bmdefine\BD{ D}
\bmdefine\BE{ E}
\bmdefine\BF{ F}
\bmdefine\BG{ G}
\bmdefine\BH{ H}
\bmdefine\BI{ I}
\bmdefine\BJ{ J}
\bmdefine\BK{ K}
\bmdefine\BL{ L}
\bmdefine\BM{ M}
\bmdefine\BN{ N}
\bmdefine\BO{ O}
\bmdefine\BP{ P}
\bmdefine\BQ{ Q}
\bmdefine\BR{ R}
\bmdefine\BS{ S}
\bmdefine\BT{ T}
\bmdefine\BU{ U}
\bmdefine\BV{ V}
\bmdefine\BW{ W}
\bmdefine\BX{ X}
\bmdefine\BY{ Y}
\bmdefine\BZ{ Z}



\DeclareMathAlphabet{\cg}{OMS}{zplm}{m}{n}
\newcommand{\HH}{\mathbb{H}}
\newcommand{\NN}{\mathbb{N}} 
\newcommand{\RR}{\mathbb{R}}      
\newcommand{\D}{\text{d}}
\usepackage{epstopdf}
\usepackage{subcaption}
\title{Optimal error estimates for analytic continuation in the upper half-plane}
\author{Yury Grabovsky \and Narek Hovsepyan}
\begin{document}
\maketitle
\begin{abstract}
  Analytic functions in the Hardy class $H^{2}$ over the upper half-plane
  $\mathbb{H}_+$ are uniquely determined by their values on any curve $\Gamma$
  lying in the interior or on the boundary of $\mathbb{H}_+$. The goal of this
  paper is to provide a sharp quantitative version of this statement. We
  answer the following question. Given $f$ of a unit $H^{2}$ norm that is
  small on $\Gamma$ (say, its $L^{2}$ norm is of order $\epsilon$), how large
  can $f$ be at a point $z$ away from the curve? When $\Gamma \subset \partial
  \mathbb{H}_+$, we give a sharp upper bound on $|f(z)|$ of the form
  $\epsilon^\gamma$, with an explicit exponent $\gamma = \gamma(z) \in (0,1)$
  and explicit maximizer function attaining the upper bound.  When $\Gamma
  \subset \mathbb{H}_+$ we give an implicit sharp upper bound in terms of a
  solution of an integral equation on $\Gamma$. We conjecture and give
  evidence that this bound also behaves like $\epsilon^\gamma$ for some
  $\gamma = \gamma(z) \in (0,1)$. These results can also be transplanted to
  other domains conformally equivalent to the upper half-plane.
\end{abstract}


\section{Introduction}
\setcounter{equation}{0} 
\label{sec:intro}
Our motivation
comes from the effort to understand stability of extrapolation of complex
electromagnetic permittivity of materials as a function of frequency
\cite{lali60:8,feyn64}. An underlying mathematical problem is about
identifying a Herglotz function---a complex analytic function in the upper
half-plane $\bb{H}_{+}$ that has nonnegative imaginary part, given its values
at specific points in the upper half-plane or on its boundary. Such functions,
and their variants, are ubiquitous in physics. For example, the complex
impedance of an electrical circuit as a function of frequency has a similar
property. Yet another example, is the dependence of effective moduli of
composites on the moduli of its constituents \cite{berg78,mi80}. These
functions appear in areas as diverse as optimal design problems
\cite{lipt01a,lipt01}, nuclear physics \cite{capr74,capr79} and medical
imaging \cite{epst08}. It is simply impossible to enumerate all of the fields
in science and engineering where they occur. Notwithstanding a more than a
century of attention, Herglotz functions remain at the forefront of research,
e.g. \cite{dlp17,cami17,sfjm18}.

Let us assume that a Herglotz function has been experimentally measured on a
curve $\GG$ in $\bb{H}_{+}$. The measurements may contain small errors and the
actual data may no longer come from any Herglotz function. The goal is to find
a Herglotz function consistent with such noisy measurements up to a small
error. In this paper we are not interested in any specific reconstruction or
extrapolation algorithm, of which there is an overabundance in the
literature, but rather in characterizing a worst case scenario, where two
Herglotz functions differ little at the data points, but may diverge
significantly, the further away from the data source we move. Since Herglotz
functions that decay at infinity always lie in a Hardy space $H^{2}$ of the
upper half-plane, we will ask how large can a Hardy function, representing the
difference between two Herglotz extrapolants of the same data be
at a specific point $z$ if we know that it is $L^{2}$-small on a curve
$\GG$ in the upper half-plane $\bb{H}_{+}$.

On the one hand complex analytic functions possess a large degree of rigidity,
being uniquely determined by values at any infinite set of points in a compact
set. This rigidity implies that even very small measurement errors will
produce data \emph{mathematically} inconsistent with values of an analytic
function. On the other hand there is a theorem due to Riesz (see
e.g. \cite{part97}) that restrictions of analytic functions in a Hardy class
$H^{2}$ are dense in $L^{2}$ on any smooth bounded curve.  Therefore,
\emph{any data} can be extrapolated as an analytic function with arbitrary
degree of agreement. The high accuracy of matching will be attained by an
increasingly wild behavior away from the curve \cite{digr01}. To see why this
occurs we can examine Carleman formulas \cite{carl26,gokr33} expressing values
of the analytic function in the domain in terms of its values on a part of the
boundary.  These formulas are highly oscillatory and reproduce values of
analytic functions using delicate \emph{exact} cancellation properties such
functions enjoy. Small measurement errors destroy these exact cancellations
and small errors get exponentially amplified. For curves in the interior
Carleman type formulas have been developed in \cite{aize12}, but they exhibit
the same error amplification feature since they are also based on the same
exact cancellation properties of analytic functions.

Typically analytic continuation problems are regularized by placing additional
boundedness constraints on the extrapolant. The resulting competition between
``rigidity'' and ``flexibility'' of complex analytic functions place such
questions between ill and well-posed problems. Our goal is to obtain a
quantitative version of such a statement. We therefore formulate the
\emph{power law transition principle} according to which all so regularized
analytic continuation problems must exhibit a power law transition from well
to ill-posedness.  Specifically, if $f(\Gz)$ is bounded in some norm in the
space of analytic functions on a domain $\GO$, and is also of order $\Ge$ on a
curve $\GG\subset\bra{\GO}$ in some other norm (e.g. in $L^{2}(\GG)$ or
$L^{\infty}(\GG)$), then it can be only as large as $C\Ge^{\Gg(z)}$ at some
other point $z\in\GO\setminus\GG$. Moreover $\Gg(z)\in(0,1)$ decreases from 1
to 0, as the point $z$ moves further and further away from $\GG$. This general
principle in the form of an upper bound has been recently established in
\cite{trefe19}. In fact, upper and lower bounds of this form have long been
known in the literature,
e.g. \cite{cami65,mill70,payne75,fran90,vese99,fdfd09,deto18,trefe19}. However,
exact values of $\Gg(z)$ have only recently been obtained in a few special cases
\cite{deto18,trefe19} by matching bounds and constructions.

The most common regularizing boundedness constraints in the literature are in the
$H^{\infty}(\GO)$ norm. The power law estimates are then derived from a
maximum modulus principle. To take a simple example from \cite{trefe19}, the modulus
of the function $e^{z\ln\Ge}f(z)$ does not exceed $\Ge$ on the boundary of the
infinite strip $\Re z\in(0,1)$, provided $|f(z)|\le 1$ in the strip and
$|f(iy)|\le\Ge$. The maximum modulus principle (or rather its
Phragm\'en-Lindel\"of version) then implies that $|f(z)|\le \Ge^{1-\Re
  z}$. The estimate is optimal, since $f(z)=\Ge e^{-z\ln\Ge}$ satisfies the
constraints and achieves equality in the maximum modulus principle. We believe
that the power law transition principle for analytic continuation holds in a
wide variety of contexts irrespective of the choice of norms, domain geometries and
sources of data.

In this paper we formulate the problem of optimal analytic continuation error
estimates using Hilbert space norms, rather than $H^{\infty}$ norms and use
variational methods that establish optimal upper bounds on the
extrapolation error. The bounds are formulated in terms of the solution of an
integral equation. In this new formulation the power law
transition principle is contained in a somewhat implicit form. It can be made
explicit in those cases where the underlying integral equation can be
solved explicitly, as is done in Section~\ref{extrapolation from boundary SECT}, and in
the companion paper \cite{grho-annulus}. There we apply our methodology
to the setting of \cite{deto18}, but with $L^{2}$ rather than $L^{\infty}$
norms. We recover their power law exponent, suggesting that the exponents must
be robust and not very sensitive to the choice of specific norms in the spaces
of analytic functions. This phenomenon could be related to the fact that
functions with worst extrapolation error can be analytically continued into
much larger domains, as is evident from our integral equation, and hence satisfy
the required constraints in all $L^{p}$ or $H^{p}$ norms.

Conformal mappings between domains can be used to ``transplant'' the exponent
estimates from one geometry to a different one. For example, we can transplant
the exponent obtained in Section~\ref{extrapolation from boundary SECT} for
the half-plane to the half-strip $\Re z>0$, $|\Im z|<1$, considered in
\cite{trefe19}. The analytic function $f(z)$ is assumed to be bounded in the
half-strip and also of order $\Ge$ on the interval $[-i,i]$ on the imaginary
axis. Then any such function must satisfy $|f(x)|\le C\Ge^{\Gg(x)}$, $x>0$,
where $\Gg(x)=(2/\pi){\rm arccot}(\sinh(\pi x/2))$. Moreover, the estimate is
sharp, since it is attained by the function $W(-i\sinh(\pi z/2))$, where
$W(\Gz)$ is given by (\ref{maximizer boundary}). This result follows from the
observation that $\Gz=-i\sinh(\pi z/2)$ is a conformal map from the half-strip
to the upper half-plane, mapping interval $[-i,i]$ to the interval
$[-1,1]$. As Trefethen points out in \cite{trefe19}, the half-strip geometry
gives a stark example of the discrepancy between mathematical well-posedness
(the analytic continuation error does go to 0 as $\Ge\to 0$) and practical
well-posedness: At $x=1$ only a quarter of all digits of precision will
remain, while at $x=2$ only 1/20th will remain.

We start our analysis by reformulating the problem as a maximization of a
linear functional with quadratic inequality constraints, which is why we use
Hilbert space norms in the original problem formulation. We then use convex
duality to obtain an upper bound on $f(z)$. The conditions of optimality of
the bound lead to an integral equation for the worst case function
$u(\Gz)$. We conclude that our upper bound is optimal, since $u(\Gz)$
satisfies all the constraints. We show that the power law transition principle
is a consequence of the conjectured exponential decay of the eigenvalues and
eigenfunctions of the integral operator (see Theorem~\ref{connections
  THM}). The eigenvalues of the integral operator are also singular values of
the restriction operator \cite{gps03}, whose exact exponential decay rates are
well-known in some cases \cite{parf78,parfenov}. The integral operator in the
upper half-plane possesses a special ``displacement structure'', and the
exponential decay of its eigenvalues follows from the upper bound in
\cite{beto17}. Our numerical computations (with the help of Leslie Greengard)
show that this upper bound matches the rate of exponential decay of
eigenvalues extremely well, when $\GG$ is the interval $[-1,1]+ih$, $h>0$. In
this special case the integral operator is also of finite convolution type and
upper and lower bounds on the rate of exponential decay of its eigenvalues
follow from results of Widom \cite{widom64}. Our computations show that these
bounds are far from optimal.
 
The paper is organized as follows. In the next section we state and discuss
our main results. In section~\ref{SEC K norm estimates} we attempt some a
priori analysis of the integral equation generating the maximizer of the
analytic continuation error. This exhibits singular features of
the integral equation that defeat standard a priori estimates approaches. In
section~\ref{conjecture implies optimality SECTION} we show how the power law
transition principle arises from putative features of the integral equation,
such as exponential decay of its eigenvalues.
In section~\ref{sec:proof1} we prove that the maximizer of the analytic continuation
error can be obtained from the solution of an integral equation. In
section~\ref{extrapolation from boundary SECT} we analyze the case when
$\GG=[-1,1]$ lies on the boundary of $\bb{H}_{+}$. In this case we show that
the error maximizer also solves an integral equation, but with a singular,
non-compact integral operator. This singular equation is then solved explicitly
and the exponent $\Gg(z)$ is computed. Examining the formula for $\Gg(z)$ we
find a beautiful geometric interpretation of this exponent.


\section{Main Results}
\setcounter{equation}{0} 
\label{sec:main}
\noindent \textbf{Notation:} Let us write $A \sim B$ as $\epsilon \to 0$,
whenever $\displaystyle \lim_{\epsilon \to 0} A/B = 1$. Let us also write $A
\lesssim B$, if there exists a constant $c$ such that $A \leq c B$ and
likewise the notation $A \gtrsim B$ will be used. If both $A
\lesssim B$ and $A \gtrsim B$ are satisfied we will write $A\simeq B$.

Let $\Gamma \subset \HH_+$ be a compact smooth ($C^{1}$) curve.
Let $L^2(\Gamma) := L^2(\Gamma, |\D \zeta|)$. In this paper all the $L^2$ spaces will be spaces of complex valued functions. Consider the Hardy space 

\begin{equation}\label{H2def}
H^2=H^2(\HH_+) := \{f \ \text{is analytic in} \ \HH_+: \sup_{y>0} \|f(\cdot + iy)\|_{L^2(\RR)} < \infty \}.
\end{equation}

\noindent It is well known \cite{koos98} that a function $f \in H^2$ has $L^2$ boundary data and that $\|f\|_{H^2} = \|f\|_{L^2(\RR)}$ defines a norm in $H^2$.

\vspace{.1in}

\subsection{Interior}

\begin{theorem}[Interior] \label{upper bound THM}
Let $\Gamma \Subset \HH_+$ be a smooth ($C^1$), bounded and simple curve and
$z \in \HH_+ \setminus\Gamma$ be the extrapolation point. Let $\epsilon > 0$ and $f \in H^2$ be such that $\|f\|_{H^2} \leq 1$ and $\|f\|_{L^2(\Gamma)} \leq \epsilon$, then

\begin{equation} \label{main bound}
  |f(z)| \leq \frac{3}{2}u_{\epsilon, z}(z)\min\left\{\frac{1}{\|u_{\epsilon, z}\|_{H^2}},
\frac{\Ge}{\|u_{\epsilon, z}\|_{L^{2}(\GG)}}\right\},
\end{equation}

\noindent where $u_{\epsilon, z}$ solves the integral equation

\begin{equation} \label{Ku +eps^2 u = kz}
(\CK u)(\Gz) +\epsilon^2 u(\Gz) = p_z(\Gz), \qquad\Gz\in \Gamma,
\end{equation}

\noindent with

\begin{equation} \label{K and kz}
(\CK u) (\zeta) = \frac{1}{2\pi} \int_\Gamma \frac{i u(\tau)}{\zeta - \overline{\tau}} |\D \tau|, \qquad \qquad p_z(\zeta) = \frac{i}{\zeta- \overline{z}}
\end{equation}

\end{theorem}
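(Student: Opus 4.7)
The plan is to recast the extremal problem $\sup|f(z)|$ subject to the two norm constraints as a linear functional estimate in the reproducing-kernel Hilbert space $H^{2}(\HH_+)$, and to interpret the integral equation \eqref{Ku +eps^2 u = kz} as its associated Lagrangian/dual optimality condition. I proceed in four steps.

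\emph{Step 1 (Operator-theoretic setup and solvability).} Let $R\colon H^{2}\to L^{2}(\Gamma)$ denote the restriction map $Rf:=f|_\Gamma$. A direct calculation with the $H^{2}$ reproducing kernel $k_{z}(\zeta)=\tfrac{i}{2\pi(\zeta-\bar z)}$ identifies the adjoint
\begin{equation*}
(R^{*}v)(\zeta)=\frac{1}{2\pi}\int_\Gamma\frac{iv(\tau)}{\zeta-\bar\tau}\,|\D\tau|,\qquad \zeta\in\HH_+,
\end{equation*}
so that $\CK=RR^{*}$ and $p_{z}|_\Gamma=2\pi Rk_{z}$. Because $\Gamma\Subset\HH_+$, the operator $R$ is Hilbert--Schmidt (elements of $H^{2}$ are analytic in a neighborhood of $\Gamma$); consequently $\CK$ is compact, self-adjoint, and nonnegative on $L^{2}(\Gamma)$, so $\CK+\epsilon^{2}I$ is boundedly invertible. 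Hence \eqref{Ku +eps^2 u = kz} has a unique solution $u_{\epsilon,z}\in L^{2}(\Gamma)$, and its Cauchy extension $U:=R^{*}u_{\epsilon,z}\in H^{2}$ is identified with $u_{\epsilon,z}$ in $\HH_+$, so that $u_{\epsilon,z}(z)=U(z)$ and $\|u_{\epsilon,z}\|_{H^{2}}=\|U\|_{H^{2}}$.

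\emph{Step 2 (Fundamental identity).} Pairing \eqref{Ku +eps^2 u = kz} with $u_{\epsilon,z}$ in $L^{2}(\Gamma)$ and using $\langle\CK u,u\rangle_{L^{2}(\Gamma)}=\|R^{*}u\|_{H^{2}}^{2}$ along with $\langle p_{z},u\rangle_{L^{2}(\Gamma)}=2\pi\langle k_{z},U\rangle_{H^{2}}=2\pi\,U(z)$ yields
\begin{equation*}
\|u_{\epsilon,z}\|_{H^{2}}^{2}+\epsilon^{2}\|u_{\epsilon,z}\|_{L^{2}(\Gamma)}^{2}=2\pi\,u_{\epsilon,z}(z),\tag{$\star$}
\end{equation*}
which also shows $u_{\epsilon,z}(z)>0$.

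\emph{Step 3 (Splitting the reproducing kernel).} Rewriting the equation as $R(2\pi k_{z}-U)=\epsilon^{2}u_{\epsilon,z}$ and setting $g:=k_{z}-\tfrac{1}{2\pi}U\in H^{2}$, one has $Rg=\tfrac{\epsilon^{2}}{2\pi}u_{\epsilon,z}$ and, by the reproducing property,
\begin{equation*}
f(z)=\langle f,k_{z}\rangle_{H^{2}}=\langle f,g\rangle_{H^{2}}+\tfrac{1}{2\pi}\langle f,U\rangle_{H^{2}}=\langle f,g\rangle_{H^{2}}+\tfrac{1}{2\pi}\langle Rf,u_{\epsilon,z}\rangle_{L^{2}(\Gamma)}
\end{equation*}
for every $f\in H^{2}$. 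The second summand can be controlled either by the $L^{2}(\Gamma)$ constraint (via the third expression) or by the $H^{2}$ constraint (via the second, using $\|U\|_{H^{2}}=\|u_{\epsilon,z}\|_{H^{2}}$).

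\emph{Step 4 (Cauchy--Schwarz and the minimum).} Applying Cauchy--Schwarz to both summands in two complementary ways yields two bounds. In each case $\|g\|_{H^{2}}^{2}$ is evaluated from the reproducing identity as
\begin{equation*}
\|g\|_{H^{2}}^{2}=g(z)-\tfrac{\epsilon^{2}}{4\pi^{2}}\|u_{\epsilon,z}\|_{L^{2}(\Gamma)}^{2}=k_{z}(z)-\tfrac{u_{\epsilon,z}(z)}{2\pi}-\tfrac{\epsilon^{2}}{4\pi^{2}}\|u_{\epsilon,z}\|_{L^{2}(\Gamma)}^{2},
\end{equation*}
and $(\star)$ is used repeatedly to rewrite mixed terms as multiples of $u_{\epsilon,z}(z)$. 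The two resulting estimates then take the homogeneous form $\tfrac{3}{2}\,u_{\epsilon,z}(z)/\|u_{\epsilon,z}\|_{H^{2}}$ and $\tfrac{3\epsilon\,u_{\epsilon,z}(z)}{2\|u_{\epsilon,z}\|_{L^{2}(\Gamma)}}$; taking the smaller of the two gives \eqref{main bound}.

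\emph{Main obstacle.} Steps 1--3 are essentially operator-theoretic bookkeeping and should go through routinely. The content is concentrated in Step 4, where one must orchestrate the Cauchy--Schwarz estimates and the substitutions from $(\star)$ so that both bounds collapse into the homogeneous product $\tfrac{3}{2}u_{\epsilon,z}(z)\cdot\bullet$. The explicit constant $\tfrac{3}{2}$ is a byproduct of this bookkeeping and need not be optimal; what matters for the paper's purposes is the functional form, since (as discussed in the introduction) the conjectured power-law exponent $\gamma(z)$ will be extracted from the $\epsilon\to 0$ asymptotics of $u_{\epsilon,z}(z)$, $\|u_{\epsilon,z}\|_{H^{2}}$, and $\|u_{\epsilon,z}\|_{L^{2}(\Gamma)}$ rather than from numerical constants in \eqref{main bound}.
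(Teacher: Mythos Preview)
Your overall strategy—splitting the reproducing kernel via the integral equation and applying Cauchy--Schwarz—is a legitimate alternative to the paper's convex-duality argument, and in fact leads to a cleaner derivation once it is done correctly. The paper instead formulates a Lagrangian $(\mu+\nu\CK)f=p$ in $H^{2}$, analyzes the equation $\Phi(\eta)=\epsilon^{2}$ for the ratio $\eta=\mu/\nu$, and then, unable to prove $\eta_{*}\simeq\epsilon^{2}$, simply sets $\eta=\epsilon^{2}$ and bounds the resulting slack using the Pythagorean identity $2\pi u(z)=\|u\|_{L^{2}(\Gamma)}^{2}+\epsilon^{2}\|u\|_{H^{2}}^{2}$. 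Your route bypasses the Lagrange-multiplier machinery entirely.

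However, there is a genuine error in Step~1 that propagates through the rest. The analytic extension of $u_{\epsilon,z}$ to $\HH_{+}$ is \emph{not} $U:=R^{*}u_{\epsilon,z}$: restricting $U$ to $\Gamma$ gives $RR^{*}u_{\epsilon,z}=\CK u_{\epsilon,z}=p_{z}-\epsilon^{2}u_{\epsilon,z}\neq u_{\epsilon,z}$. The correct extension (this is the content of Remark~\ref{remark}(2)) is obtained by reading the integral equation off $\Gamma$, namely $u_{\epsilon,z}(\zeta)=\epsilon^{-2}\bigl(p_{z}(\zeta)-(\CK u_{\epsilon,z})(\zeta)\bigr)$. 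In your own notation this is $u_{\epsilon,z}^{\mathrm{ext}}=\tfrac{2\pi}{\epsilon^{2}}g$, not $U$. Consequently your identity~$(\star)$ is written with the weights swapped; the correct version (the paper's Corollary~\ref{cor:Pyth}) is
\[
\|u_{\epsilon,z}\|_{L^{2}(\Gamma)}^{2}+\epsilon^{2}\|u_{\epsilon,z}\|_{H^{2}}^{2}=2\pi\,u_{\epsilon,z}(z),
\]
and neither $u_{\epsilon,z}(z)=U(z)$ nor $\|u_{\epsilon,z}\|_{H^{2}}=\|U\|_{H^{2}}$ holds.

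Once you correct the identification, your Step~3 splitting becomes $2\pi k_{z}=\epsilon^{2}u_{\epsilon,z}^{\mathrm{ext}}+R^{*}u_{\epsilon,z}$, and Cauchy--Schwarz gives $2\pi|f(z)|\le\epsilon^{2}\|u_{\epsilon,z}\|_{H^{2}}+\epsilon\|u_{\epsilon,z}\|_{L^{2}(\Gamma)}$. Combining with the corrected Pythagorean identity via $\epsilon\|u\|_{L^{2}(\Gamma)}\cdot\|u\|_{H^{2}}\le\pi u(z)$ and $\epsilon^{2}\|u\|_{H^{2}}^{2}\le 2\pi u(z)$, $\|u\|_{L^{2}(\Gamma)}^{2}\le 2\pi u(z)$ then yields both forms of \eqref{main bound} with the constant $\tfrac{3}{2}$, exactly as claimed.
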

The theorem is proved in Section~\ref{DMP Section}.

\begin{remark} \label{remark} \mbox{}
\begin{enumerate}

\item $\CK$ is a compact, self-adjoint and positive operator on $L^2(\Gamma)$ (cf. Section~\ref{conjecture implies optimality SECTION}). In particular, \eqref{Ku +eps^2 u = kz} has a unique solution $u_{\Ge,z} \in L^2(\Gamma)$.

\item It is evident that
  $\CK u$ and $ p_z$ are well-defined members of $H^{2}(\bb{H}_{+})$. Hence,
  when $\Gz\not\in\GG$ the integral equation (\ref{Ku +eps^2 u = kz}) is
  interpreted as a definition of $u_{\epsilon, z} (\Gz)$. 
  This explains the meaning of
  the \rhs\ in \eqref{main bound}.
\end{enumerate}
\end{remark}

The bound in (\ref{main bound}) is asymptotically optimal, as $\epsilon\to 0$
since the function 
\begin{equation}
  \label{Mepsz}
  M_{\epsilon, z} (\zeta)= u_{\epsilon, z}(\zeta)\min\left\{\frac{1}{\|u_{\epsilon, z}\|_{H^2}},
\frac{\Ge}{\|u_{\epsilon, z}\|_{L^{2}(\GG)}}\right\}
\end{equation}
has $L^2(\Gamma)$-norm bounded by $\epsilon$ and $H^{2}$-norm bounded by 1. 
Even though we only required $f$ to be in $H^2(\HH_+)$, the optimal function
  $M_{\epsilon, z}(\Gz)$ is actually analytic in $\bb{C}\setminus\bra{\GG}$. 

  We believe that the two quantities under the minimum in (\ref{Mepsz}) have
  the same asymptotics as $\Ge\to 0$, and hence, the error maximizer can be
  written either as $u_{\epsilon, z}/\|u_{\epsilon, z}\|_{H^2}$ or as $\Ge
  u_{\epsilon, z}/\|u_{\epsilon, z}\|_{L^2(\GG)}$.

\begin{conjecture} \label{conjecture upper bound}

Let $u_{\epsilon, z}$ be as in Theorem~\ref{upper bound THM}, then 

\begin{equation} \label{optimality eqn}
\|u_{\epsilon, z}\|_{L^2(\Gamma)} \simeq \epsilon \|u_{\epsilon, z}\|_{H^2}.
\end{equation}
\end{conjecture}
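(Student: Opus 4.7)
The plan is to convert the conjecture into a sharp spectral identity and then close it using the exponential decay of the eigenvalues of $\CK$ conjectured in Section~\ref{conjecture implies optimality SECTION}. First, I would pair the integral equation $(\CK + \epsilon^2) u_{\epsilon,z} = p_z$ against $u_{\epsilon,z}$ in $L^2(\Gamma)$ and exploit that $\CK = R R^*$, where $R : H^2 \to L^2(\Gamma)$ is the restriction operator and $p_z/(2\pi) = k_z$ is the reproducing kernel of $H^2(\HH_+)$. Using $(\CK u_{\epsilon,z}, u_{\epsilon,z})_{L^2(\Gamma)} = \|R^* u_{\epsilon,z}\|_{H^2}^2 = \|p_z - \epsilon^2 u_{\epsilon,z}\|_{H^2}^2$ together with $(p_z, u_{\epsilon,z})_{H^2} = 2\pi\,\overline{u_{\epsilon,z}(z)}$ gives after a short calculation the clean identity
\[
\epsilon^2 \|u_{\epsilon,z}\|_{H^2}^2 + \|u_{\epsilon,z}\|_{L^2(\Gamma)}^2 = 2\pi\, u_{\epsilon,z}(z).
\]
(Here $u_{\epsilon,z} = 2\pi(R^*R + \epsilon^2)^{-1} k_z$ is viewed as an $H^2$ function, and $u_{\epsilon,z}(z) > 0$ is real by self-adjointness of $(R^*R + \epsilon^2)^{-1}$.) Both summands on the left are at most $2\pi u_{\epsilon,z}(z)$, so Conjecture~\ref{conjecture upper bound} reduces to the two matching lower bounds $\|u_{\epsilon,z}\|_{L^2(\Gamma)}^2 \gtrsim u_{\epsilon,z}(z)$ and $\epsilon^2 \|u_{\epsilon,z}\|_{H^2}^2 \gtrsim u_{\epsilon,z}(z)$ as $\epsilon \to 0$.

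Next I would pass to the spectral picture. Let $\CK\phi_n = \lambda_n \phi_n$ with $\{\phi_n\}$ orthonormal in $L^2(\Gamma)$, and set $\psi_n := R^*\phi_n/\sqrt{\lambda_n}$; these form a complete orthonormal system in $H^2$ since $R$ is injective. Writing $\alpha_n := |\psi_n(z)|^2 \geq 0$ and expanding in $\{\psi_n\}$ yields
\[
u_{\epsilon,z}(z) = 2\pi\sum_n\frac{\alpha_n}{\lambda_n + \epsilon^2}, \quad \|u_{\epsilon,z}\|_{H^2}^2 = 4\pi^2\sum_n\frac{\alpha_n}{(\lambda_n+\epsilon^2)^2}, \quad \|u_{\epsilon,z}\|_{L^2(\Gamma)}^2 = 4\pi^2\sum_n\frac{\lambda_n\alpha_n}{(\lambda_n+\epsilon^2)^2}.
\]
Thus the ratio $\|u_{\epsilon,z}\|_{L^2(\Gamma)}^2/(\epsilon^2 \|u_{\epsilon,z}\|_{H^2}^2)$ is precisely the weighted average of $\lambda_n/\epsilon^2$ with weights $w_n := \alpha_n/(\lambda_n + \epsilon^2)^2$, and the conjecture becomes the statement that this weighted average stays bounded above and bounded away from zero, i.e., that the critical index $N(\epsilon)$ with $\lambda_N \simeq \epsilon^2$ lies in the effective support of $\{w_n\}$.

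Assuming the conjectured exponential decay $\lambda_n \simeq e^{-\alpha n}$ one has $N(\epsilon) \simeq \alpha^{-1}\log(\epsilon^{-2})$, and splitting the sums at $n = N$ reduces the conjecture to the comparability
\[
\sum_{n < N}\frac{\alpha_n}{\lambda_n} \simeq \frac{1}{\epsilon^2}\sum_{n > N}\alpha_n \qquad (\epsilon \to 0).
\]
A direct computation shows that this comparability holds whenever $\alpha_n$ itself admits a geometric decay $\alpha_n \simeq r^n$ with $r \in (e^{-\alpha}, 1)$: both sides are then of order $r^N/\epsilon^2 \simeq \epsilon^{2\log(r^{-1})/\alpha - 2}$, and plugging back into Theorem~\ref{upper bound THM} recovers the predicted exponent $\gamma(z) = \log(r^{-1})/\alpha \in (0, 1)$. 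The constraint $r > e^{-\alpha}$ translates into the requirement that $\alpha_n$ not decay faster than $\lambda_n$, i.e., that the eigenfunctions $\psi_n$ retain nontrivial amplitude at the extrapolation point $z$ relative to their $L^2(\Gamma)$ size.

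The hard part is exactly this spectral input: obtaining sharp two-sided estimates on the eigenfunction evaluations $\alpha_n = |\psi_n(z)|^2$ (equivalently, on $p_n := (p_z, \phi_n)_{L^2(\Gamma)}$ via $|p_n|^2 = 4\pi^2 \lambda_n\alpha_n$). The exponential decay of $\lambda_n$ from the displacement-structure arguments of \cite{beto17} and the convolution-type analysis of \cite{widom64} supplies only half of the needed information and is compatible with pathological mass distributions $\{\alpha_n\}$ that would falsify the conjecture. A plausible route is a WKB or Riemann--Hilbert asymptotic analysis of the $\psi_n$ on model curves (analogous to the classical theory of prolate spheroidal wave functions), followed by transplantation to general smooth $\Gamma$ via conformal maps. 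An alternative, more indirect route is to construct explicit near-optimal competitors $f \in H^2$ for which both $\|f\|_{H^2} \simeq 1$ and $\|f\|_{L^2(\Gamma)} \simeq \epsilon$ are saturated simultaneously, and to leverage the $(3/2)$-tightness of Theorem~\ref{upper bound THM} to force both summands in the identity above to be of the same order.
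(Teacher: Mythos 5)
The statement you are trying to prove is labeled a \emph{Conjecture} in the paper, and the paper itself does not prove it unconditionally; what it does prove is a conditional version (Theorem~\ref{connections THM} and, more precisely, Lemma~\ref{lem:BDconj}) under the ratio hypotheses \eqref{optimality relations} on the spectral data $\lambda_n$, $\pi_n$. Your proposal reaches essentially the same conditional conclusion by essentially the same route. Your pairing of the integral equation with $u_{\epsilon,z}$ and the reproducing-kernel identity $(p_z,u)_{H^2}=2\pi\overline{u(z)}$ is an equivalent derivation of the paper's Pythagorean identity \eqref{Pyth} (Corollary~\ref{cor:Pyth}), obtained there via the key adjoint relation \eqref{keyK}, which is the same as your $\CK = R R^*$ factorization. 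Your switch to the basis $\psi_n = R^*\phi_n/\sqrt{\lambda_n}$ and the quantities $\alpha_n = |\psi_n(z)|^2$ is a cosmetic reparametrization of the paper's $\{e_n\}$, $\pi_n$: one has $|\pi_n|^2 = 4\pi^2\lambda_n\alpha_n$, so your three spectral sums are literally \eqref{L2normu}, the $H^2$ analogue, and \eqref{uofz}. Your split at the switchover index $N$ with $\lambda_N\simeq\epsilon^2$ and your reduction to the comparability of the tails is exactly the mechanism of Lemma~\ref{lem:BDconj} (where the index is called $J$ and the needed input is phrased as the one-sided ratio bounds \eqref{optimality relations} rather than your two-sided geometric decay $\alpha_n\simeq r^n$, $r\in(e^{-\alpha},1)$ — your hypothesis is a hair stronger and closer to Theorem~\ref{connections THM}'s \eqref{lambda_n and pi_n asymp}). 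Finally, you correctly and honestly locate the genuine gap: a lower bound on $|\pi_n|$ (equivalently, on $\alpha_n$) is needed to rule out ``pathological mass distributions,'' and the displacement-structure and Widom arguments provide only eigenvalue decay, not eigenfunction-evaluation bounds. This is precisely the open part; the paper leaves it conjectural as well (Conjecture~\ref{conjecture M_eps,z asymp}). So your proposal does not close the conjecture, but neither does the paper; you have faithfully reconstructed the conditional argument and correctly identified what would be required to complete it.
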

The difficulty in establishing (\ref{optimality eqn}) is that in this
particular equation the solution $u_{\epsilon, z}$ must achieve a delicate
balance after the cancellation in (\ref{Ku +eps^2 u = kz}). We will show
(see Section~\ref{SEC K norm estimates}) that
$\|u_{\Ge,z}\|_{L^{2}(\GG)}=o(\Ge^{-1})$, while
\[
\lim_{\Ge\to 0}\CK u_{\Ge,z}=p_{z}
\]
both in $L^{2}(\GG)$ and pointwise in $\bb{H}_{+}$. Therefore, the second term on the \lhs\ in
(\ref{Ku +eps^2 u = kz}) is infinitesimal compared to other terms and hence
represents a delicate matching of the remainder after cancellation in
$p_{z}-\CK u_{\Ge,z}=\Ge^{2}u_{\Ge,z}=o(\Ge)$ in $L^{2}(\GG)$. We will also
see that $M_{\epsilon, z}(z)=o(1)$ and $M_{\epsilon, z}(z)\gg\Ge$ as $\epsilon \downarrow 0$.
This implies that if the power law transition principle
holds, i.e. 
\begin{equation}
\label{powerlaw}
 M_{\epsilon, z} (z) \sim  \epsilon^{\gamma}, \qquad \text{as} \qquad \epsilon \to 0,
\end{equation}
then $\gamma = \gamma_{\GG}(z) \in (0,1)$. In (\ref{powerlaw}) we
abuse our notation convention for $\sim$ for the sake of aesthetics. The
mathematically correct statement would be $\ln M_{\epsilon, z} (z)
\sim\Gg_{\GG}(z)\ln\Ge$. The exponent $\gamma_{\GG}(z)$ is expected to grow
smaller the further away point $z$ moves from $\GG$, so that $\Gg_{\GG}(z)\to
0$ as $z\to\infty$. 
The genesis of the exponent $\Gg_{\GG}(z)$ in (\ref{powerlaw}) from equation
\eqref{Ku +eps^2 u = kz} that itself contains no fractional exponents of
$\Ge$, comes from the conjectured exponential decay of eigenvalues $\Gl_{n}$
of $\CK$. 

The exponential upper bound on $\Gl_{n}$ is a consequence of the displacement
rank 1 structure:
\begin{equation}
  \label{displ}
  (M \CK -\CK M^{*})u=\frac{i}{2\pi}\int_{\GG}u(\tau)|d\tau|=:Ru,
\end{equation}
where $M:L^{2}(\GG)\to L^{2}(\GG)$ is the operator of multiplication by
$\tau\in\GG$: $(Mu)(\tau)=\tau u(\tau)$. 
The operator $R$ on the \rhs\ of (\ref{displ}) is a rank-one operator, since its range
consists of constant functions. Then, according to \cite{beto17},
\begin{equation}
  \label{Zolot}
  \Gl_{n+1}\le
  \rho_{1}\Gl_{n},\qquad\rho_{1}=\inf_{r\in\CM}\frac{\max_{\tau\in\GG}|r(\tau)|}
{\min_{\tau\in\GG}|r(\bra{\tau})|},
\end{equation}
for all $n\ge 1$, where $\CM$ is the set of all M\"obius transformations
\[
r(\tau)=\frac{a\tau+b}{c\tau+d}.
\]
It is easy to see that $\rho_{1}<1$ by considering M\"obius transformations that
map upper half-plane into the unit disk. Then $\GG$ will be mapped to a
curve inside the unit disk, so that $m=\max_{\tau\in\GG}|r(\tau)|<1$. By the
symmetry property of M\"obius transformations the image of $\bra{\GG}$ will be
symmetric to the image of $\GG$ with respect to the inversion in the unit
circle. Thus, $\min_{\tau\in\GG}|r(\bra{\tau})|=1/m$, so that $\rho_{1}\le m^{2}<1$.
In particular this implies that all eigenvalues have
multiplicity 1. 

The implied exponential upper bound $\Gl_{n+1}\le \rho_{1}^{n}\Gl_{1}$ is not the best
that one can derive from the rank-1 displacement structure
(\ref{displ}). According to a theorem of Beckermann and Townsend \cite{beto17},
$
\Gl_{n}\le Z_{n}(\GG,\bra{\GG})\Gl_{1},
$
where $Z_{n}$ is the $n$th Zolotarev
number \cite{zolo1877}. When $n$ is large, the Zolotarev numbers decay exponentially
$\ln Z_{n}(\GG,\bra{\GG})\sim -n\ln \rho_{\GG}$,
where $\rho_{\GG}$ is the Riemann invariant, whereby the annulus
$\{1<|z|<\rho_{\GG}\}$ is conformally equivalent to the Riemann sphere with
$\GG$ and $\bra{\GG}$ removed \cite{gonc69}. Hence,
\begin{equation}
  \label{BTub}
  \Gl_{n}\lesssim\rho_{\GG}^{-n}.
\end{equation}

We are ready now to relate the spectral exponential decay rates to the power
law (\ref{powerlaw}).  Let $\{e_n\}_{n=1}^\infty$ denote the orthonormal
eigenbasis of $\CK$. In this basis equation \eqref{Ku +eps^2 u = kz}
diagonalizes:
\[
\Gl_{n}u_{n}+\Ge^{2}u_{n}=\pi_{n},\qquad u_{n}=(u,e_{n})_{L^{2}(\GG)},\quad
\pi_n = (p_z, e_n)_{L^2(\Gamma)},
\]
and is easily solved
\begin{equation}
  \label{unsol}
  u_{n}=\frac{\pi_{n}}{\Gl_{n}+\Ge^{2}}.
\end{equation}
We will prove that 
\begin{equation}
  \label{pi_n_evidence}
\sum_{n=1}^{\infty}\frac{|\pi_{n}|^{2}}{\Gl_{n}}<+\infty, \qquad
\sum_{n=1}^{\infty}\frac{|\pi_{n}|^{2}}{\Gl^{2}_{n}}=+\infty,
\end{equation}
indicating that the coefficients $\pi_{n}$ must also decay exponentially fast. The
power law principle is then a consequence of the strictly exponential decay of
eigenvalues $\Gl_{n}$ and coefficients $\pi_{n}$.

\begin{theorem} \label{connections THM}
Let $\{e_n\}_{n=1}^\infty$ denote the orthonormal eigenbasis of $\CK$, and let 
$\pi_n = (p_z, e_n)_{L^2(\Gamma)}$. Assume that
\begin{equation} \label{lambda_n and pi_n asymp}
\lambda_n \sim C_{1}e^{-\alpha n}, \quad |\pi_n|^2 \sim C_{2}e^{-\beta n},\qquad
0<\Ga<\Gb<2\Ga,
\end{equation}
so that \eqref{pi_n_evidence} holds.
Then estimate \eqref{optimality eqn} holds, and $M_{\epsilon, z}$, given by
\eqref{Mepsz} has the power law asymptotics
\begin{equation}
  \label{exactgamma}
  M_{\epsilon, z}(z) \simeq \epsilon^{\frac{\beta - \alpha}{\alpha}},
\end{equation}
with implicit constants independent of $\epsilon$.
\end{theorem}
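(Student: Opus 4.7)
My approach is to diagonalize (\ref{Ku +eps^2 u = kz}) in the orthonormal eigenbasis $\{e_n\}$ of $\CK$ and reduce the three quantities entering (\ref{Mepsz}) to explicit numerical series in $\lambda_n$, $|\pi_n|^2$ and $\epsilon$. Using (\ref{unsol}), the first step is to derive the identities
\begin{equation*}
\|u_{\epsilon,z}\|_{L^2(\Gamma)}^2 = \sum_{n\ge 1}\frac{|\pi_n|^2}{(\lambda_n+\epsilon^2)^2}, \qquad
\|u_{\epsilon,z}\|_{H^2}^2 = \sum_{n\ge 1}\frac{|\pi_n|^2}{\lambda_n(\lambda_n+\epsilon^2)^2},
\end{equation*}
\begin{equation*}
u_{\epsilon,z}(z) = \frac{1}{2\pi}\sum_{n\ge 1}\frac{|\pi_n|^2}{\lambda_n(\lambda_n+\epsilon^2)}.
\end{equation*}
The first is Parseval on $L^{2}(\Gamma)$. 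For the second I would factor $\CK = RR^{*}$ with $R:H^{2}\to L^{2}(\Gamma)$ the restriction operator, so that $R^{*}:L^{2}(\Gamma)\to H^{2}$ is the integral formula (\ref{K and kz}) viewed as an $H^{2}$-valued extension; then $\tilde e_n:=\sqrt{\lambda_n}\,e_n$ are orthonormal in $H^{2}$, and the $H^{2}$ extension of $u_{\epsilon,z}$ has expansion $\sum_n(u_n/\sqrt{\lambda_n})\tilde e_n$, yielding the stated formula. For the third, the eigenfunction extends off $\Gamma$ by $e_n(z) = \lambda_n^{-1}(\CK e_n)(z)$, and a short direct calculation using $p_z(\zeta) = i/(\zeta-\bar z)$ identifies $(\CK e_n)(z) = (e_n,p_z)_{L^2(\Gamma)}/(2\pi) = \overline{\pi_n}/(2\pi)$, after which summing $u_n e_n(z)$ produces the claimed series.

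Under the hypothesis (\ref{lambda_n and pi_n asymp}) each sum is a discrete Laplace-type sum. Let $N=N(\epsilon)\sim -(2\ln\epsilon)/\alpha$ be the transition index at which $\lambda_N\sim\epsilon^2$. For $n\le N$ we have $\lambda_n+\epsilon^2\simeq \lambda_n\simeq e^{-\alpha n}$, while for $n>N$ we have $\lambda_n+\epsilon^2\simeq \epsilon^2$. Splitting each sum at $n=N$: the head summand grows geometrically (ratio of consecutive terms is $e^{2\alpha-\beta}>1$ or $e^{3\alpha-\beta}>1$, thanks to $\beta<2\alpha$), and the tail summand decays geometrically (ratio $e^{-\beta}<1$ or $e^{\alpha-\beta}<1$, thanks to $\beta>\alpha$). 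Consequently each series is controlled above and below by a fixed multiple of the summand at $n\simeq N$, and substitution gives
\begin{equation*}
\|u_{\epsilon,z}\|_{L^2(\Gamma)}^2\simeq \epsilon^{\frac{2\beta}{\alpha}-4},\qquad
\|u_{\epsilon,z}\|_{H^2}^2\simeq \epsilon^{\frac{2\beta}{\alpha}-6},\qquad
u_{\epsilon,z}(z)\simeq \epsilon^{\frac{2\beta}{\alpha}-4}.
\end{equation*}
The first two differ by a factor $\epsilon^2$, which is exactly (\ref{optimality eqn}). Plugging into (\ref{Mepsz}) then gives $M_{\epsilon,z}(z)\simeq \epsilon^{2\beta/\alpha-4}\cdot\epsilon^{3-\beta/\alpha}=\epsilon^{(\beta-\alpha)/\alpha}$, which is (\ref{exactgamma}).

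The main technical obstacle is upgrading the leading-order hypothesis $\lambda_n\sim C_1 e^{-\alpha n}$, $|\pi_n|^2\sim C_2 e^{-\beta n}$ to the two-sided $\simeq$ bounds in the conclusion. I would handle this by a standard sandwich: for every $\eta>0$ pick $n_0$ so that the asymptotics hold within a factor $1\pm\eta$ for all $n\ge n_0$, separate off the finite head (which is $O(1)$ and dominated by the $\epsilon$-divergent tail as $\epsilon\downarrow 0$), and bracket the remaining tail by two geometric sums whose ratios differ from the nominal values by $O(\eta)$. The structural reason the bookkeeping works is exactly the assumption $\alpha<\beta<2\alpha$: the upper bound $\beta<2\alpha$ makes the head summand grow geometrically so that the head contribution is concentrated near $n=N$ rather than at small $n$, while the lower bound $\alpha<\beta$ makes the tail summand decay geometrically so that the tail is likewise concentrated near $n=N$; together they ensure both halves of every split contribute the same order, and they place $(\beta-\alpha)/\alpha$ squarely in $(0,1)$, as required by the power law transition principle.
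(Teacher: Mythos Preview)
Your proof is correct and follows essentially the same route as the paper: the same three spectral formulas (the paper derives them via the identity $(u,f)_{L^2(\Gamma)}=(\CK u,f)_{H^2}$ and the auxiliary Hilbert space $H_+$, which is exactly your factorization $\CK=RR^{*}$), the same switchover index $J\simeq -2\ln\epsilon/\alpha$, and the same geometric head/tail estimates driven by $\alpha<\beta<2\alpha$. The paper packages the argument into two separate lemmas---one establishing $\|u\|_{L^2(\Gamma)}\simeq\epsilon\|u\|_{H^2}$ under hypotheses weaker than (\ref{lambda_n and pi_n asymp}), and one proving the general asymptotic $\sum_n b_n/(a_n+\delta)^2\simeq\delta^{\beta/\alpha-2}$---but the substance is identical to what you outlined.
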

The theorem is proved in Section~\ref{SEC K from exp decay to power law}.
\begin{remark}
  The coefficients $\pi_{n}$ of $p_{z}$ in the eigenbasis of $\CK$
  can be expressed in terms of the eigenfunctions $\{e_{n}\}$ (cf. \eqref{(u,p)=Ku}):
\[
\pi_n = 2\pi \lambda_n \overline{e_n(z)}.
\]   
\end{remark}

\begin{conjecture} \label{conjecture M_eps,z asymp} The eigenvalues $\Gl_{n}$
  of $\CK$ and coefficients $\pi_n = 2\pi \lambda_n \overline{e_n(z)}$
  have exponential decay asymptotics \eqref{lambda_n and pi_n
    asymp}. Moreover, we also conjecture that the asymptotic upper bound (\ref{BTub})
  captures the rate of exponential decay of $\Gl_{n}$, i.e. $\Ga=\ln\rho_{\GG}$.
\end{conjecture}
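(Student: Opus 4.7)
The plan is to prove Conjecture~\ref{conjecture M_eps,z asymp} by first establishing the sharp exponential decay $\lambda_n\sim C_1 e^{-\alpha n}$ with $\alpha=\ln\rho_\Gamma$, and then transferring this asymptotics to the coefficients $\pi_n$ via the identity $\pi_n=2\pi\lambda_n\overline{e_n(z)}$. The starting point is to reinterpret $\lambda_n$ as the squared singular values of the restriction operator $T\colon H^2(\bb{H}_+)\to L^2(\Gamma)$, $Tf=f|_\Gamma$. A direct calculation using the Szeg\H{o} reproducing kernel $k_w(\zeta)=i/(2\pi(\zeta-\overline{w}))$ for $H^2(\bb{H}_+)$ identifies $\CK$ with $TT^{*}$, so that the spectral problem for $\CK$ coincides with the singular value decomposition of $T$.

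To match the Beckermann-Townsend upper bound \eqref{BTub} from below, I would appeal to Parfenov's theorem \cite{parfenov}, building on Widom \cite{widom64}, which identifies the exact exponential decay rate of singular numbers of restriction operators between Hardy spaces of doubly connected regions with the logarithm of the conformal modulus of the region. Since $\widehat{\bb{C}}\setminus(\Gamma\cup\overline{\Gamma})$ is conformally equivalent to the annulus $\{1<|w|<\rho_\Gamma\}$ by the very definition of $\rho_\Gamma$, verifying the regularity hypotheses of Parfenov's theorem for the restriction operator $T$ associated with a smooth simple arc $\Gamma\Subset\bb{H}_+$ would yield $\lambda_n\sim C_1 e^{-\alpha n}$ with $\alpha=\ln\rho_\Gamma$ exactly.

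For the $\pi_n$ asymptotics, the identity $|\pi_n|^2=4\pi^2\lambda_n^2|e_n(z)|^2$ reduces the question to the exponential growth rate of $|e_n(z)|$. The eigenfunctions extend analytically to $\widehat{\bb{C}}\setminus\overline{\Gamma}$ through $e_n(w)=\lambda_n^{-1}(\CK e_n)(w)$, and I would pursue a WKB-type ansatz
\[
e_n(\zeta)\approx A_n(\zeta)\,\varphi(\zeta)^n,
\]
where $\varphi$ is the conformal map from $\widehat{\bb{C}}\setminus(\Gamma\cup\overline{\Gamma})$ onto $\{1<|w|<\rho_\Gamma\}$, normalized so that $|\varphi|=1$ on $\Gamma$ and $|\varphi|=\rho_\Gamma$ on $\overline{\Gamma}$, with $A_n$ algebraically bounded in $n$. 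The $L^2(\Gamma)$ normalization of $e_n$ fixes $|A_n|$ on $\Gamma$ and propagates by analyticity to give $|e_n(z)|^2\simeq|\varphi(z)|^{2n}$, hence $\beta=2\alpha-2\ln|\varphi(z)|$. The constraint $\alpha<\beta<2\alpha$ then reads $0<\ln|\varphi(z)|<\alpha/2$, and the resulting exponent $\gamma_\Gamma(z)=(\beta-\alpha)/\alpha=1-2\ln|\varphi(z)|/\ln\rho_\Gamma$ carries a natural interpretation as a harmonic measure on the annular region.

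The main obstacle is the matching lower bound for $\lambda_n$. The Beckermann-Townsend inequality is a soft consequence of the rank-one displacement identity \eqref{displ}; matching it requires a quantitative Zolotarev-type construction of extremal rational approximants on $\Gamma$, which is the technical heart of the Parfenov/Widom theory. Extending those arguments, originally formulated for convolution operators on intervals, to a general smooth curve $\Gamma\Subset\bb{H}_+$ demands delicate potential-theoretic analysis near the endpoints of $\Gamma$, and the same analysis is what would justify the WKB ansatz needed to pin down the growth rate of $|e_n(z)|$. The numerical evidence discussed in the introduction supports the sharpness of \eqref{BTub} and gives us confidence in the conjectured exponents.
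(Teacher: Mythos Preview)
The statement you are addressing is a \emph{conjecture}, and the paper does not prove it; it only assembles evidence (the Beckermann--Townsend upper bound, numerical experiments, the explicit boundary case, and the Parfenov result for subdomains). Your proposal is therefore not to be compared against a proof in the paper, but assessed on its own merits as an attempted proof.

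Your reduction of $\lambda_n$ to the squared singular values of the restriction operator $T:H^2(\bb{H}_+)\to L^2(\Gamma)$ is correct and is essentially the identity (\ref{keyK}). However, the argument then rests on two steps that are not carried out and, as currently stated, cannot be:
\begin{itemize}
\item You invoke Parfenov's theorem to obtain a matching lower bound $\lambda_n\gtrsim\rho_\Gamma^{-n}$. But Parfenov's result is proved for restriction to a compact \emph{subdomain} with sufficiently smooth boundary, not to an arc. The paper itself points this out explicitly: ``The curve $\Gamma$ could also be regarded as a limiting case of a domain. However, its boundary would not be smooth and the analysis in \cite{parfenov} would not apply.'' You acknowledge that extending the argument to arcs ``demands delicate potential-theoretic analysis near the endpoints of $\Gamma$,'' but you do not supply any of it. This is precisely the missing ingredient that keeps the statement a conjecture.
\item For the $\pi_n$ asymptotics you posit a WKB ansatz $e_n(\zeta)\approx A_n(\zeta)\varphi(\zeta)^n$ with $A_n$ algebraically bounded. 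No mechanism is offered for why the eigenfunctions of $\CK$ should admit such a representation, nor is the claimed propagation of the $L^2(\Gamma)$ normalization to pointwise control at $z$ justified. The formal exponent you extract, $\gamma_\Gamma(z)=1-2\ln|\varphi(z)|/\ln\rho_\Gamma$, is consistent with the paper's upper bound $\theta(z)$ in (\ref{gamma1}), but the paper obtains that exponent by transplanting an explicit annulus construction, not by proving eigenfunction asymptotics; and Figure~\ref{fig:aofz} shows $\theta(z)$ is only an upper bound on the numerically observed $\gamma_\Gamma(z)$, not an equality, for small $h$.
\end{itemize}

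In short, your outline correctly identifies the two analytic obstructions---the lower bound on $\lambda_n$ for arcs and the pointwise asymptotics of $e_n(z)$---but it does not resolve either one. What you have written is a reasonable research plan that is fully consistent with the paper's own discussion of why the conjecture remains open; it is not a proof.
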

There is substantial evidence supporting this conjecture, including the
explicit formula for $\Gg(z)$ in the limiting case when
$\GG\subset\Md\bb{H}_{+}$, given in Theorem~\ref{main boundary THM}
below. Also, if the $L^{2}$ norm of $f\in H^{2}$ were of order $\Ge$ on a
compact subdomain $G\subset\bb{H}_{+}$, instead of the curve $\GG$, then the
conjectured asymptotics of $\Gl_{n}$ would hold, as shown in \cite{parfenov},
provided the boundary of $G$ is sufficiently smooth. The curve $\GG$ could
also be regarded as a limiting case of a domain. However, its boundary would
not be smooth and the analysis in \cite{parfenov} would not apply.

The operator $\Ge^{2}+\CK$ in the integral equation (\ref{Ku +eps^2 u = kz})
is almost singular when $\Ge$ is small, since $\CK$ is compact and has no
bounded inverse. It was the idea of Leslie Greengard to solve (\ref{Ku +eps^2
  u = kz}) directly numerically using quadruple precision floating point
arithmetic available in FORTRAN. He has written the code and shared the
FORTRAN libraries for Gauss quadrature, linear systems solver and eigenvalues
and eigenvectors routines for Hermitian matrices. For the numerical
computations we took $\GG=[-1,1]+ih$, and extrapolation points
$z+ih$, $z\ge 1$. Quadruple precision allowed us to compute all eigenvalues of
$\CK$ that are larger than $10^{-33}$ and solve the integral equation (\ref{Ku
  +eps^2 u = kz}) for values of $\Ge$ as low as $10^{-16}$. For this
particular choice of $\GG$ the operator $\CK$ is a finite convolution type
operator with kernel $k(t) = \frac{i(2\pi)^{-1}}{t+2ih}$. Asymptotics of
eigenvalues of positive self-adjoint finite convolution operators with
real-valued kernels (i.e. even real functions $k(t)$) were obtained by Widom
in \cite{widom64}. To apply these results we note that $\widehat{k}(\xi) =
e^{-2h\xi} \chi_{(0,+\infty)}(\xi)$, which has exact exponential decay when
$\xi\to+\infty$. The operator $\CK_{0}$ with the even real kernel
$k_{0}(t)=2\Re k(t)$ has symbol $\widehat{k}_{0}(\xi) = e^{-2h|\xi|}$ to which
Widom's theory applies.  Widom's formula gives
\begin{equation*}
\ln \lambda_n(\CK_0) \sim -W n,\qquad\text{as} \ n\to \infty,\qquad 
W=-\pi \frac{K\left(\sech\left(\frac{\pi}{2h}\right)\right)}
{K\left(\tanh\left(\frac{\pi}{2h}\right)\right)},
\end{equation*}
\noindent where $K(k)$ is the complete elliptic integral of the first kind. We therefore
obtain an upper bound
\begin{equation} \label{Widom upper bound}
\ln \lambda_n(\CK_h) \leq \ln \lambda_n(\CK_0) \sim -Wn.
\end{equation}
The lower bound can be obtained from the same formula using an inequality
\[
\lambda_n(\CK_0) \leq \lambda_{n/2}(\CK_h) +
\lambda_{n/2}(\overline{\CK_h}) = 2 \lambda_{n/2}(\CK_h),
\] 
so that
\begin{equation} \label{Widom lower bound}
\ln \lambda_n(\CK_h) \geq \ln \tfrac{1}{2} + \ln \lambda_{2n}(\CK_0) \sim  -2Wn.
\end{equation}
Figure~\ref{fig:Keigs}, where $h=1$ supports the exponential decay conjecture
\eqref{lambda_n and pi_n asymp} and shows that estimates (\ref{Widom upper
  bound}), (\ref{Widom lower bound}) are not asymptotically sharp. By
contrast, Figure~\ref{fig:Keigs} shows that the Beckermann-Townsend upper
bound (\ref{BTub}) matches the asymptotics of $\Gl_{n}$ very well. The
explicit transformation $\Psi$ of the extended complex plane with $[-1,1]\pm
ih$ removed onto the annulus
$\{v\in\bb{C}:\rho_{\GG}^{-1/2}<v<\rho_{\GG}^{1/2}\}$ has been derived in
\cite[p.~138]{akhi90} in terms of the elliptic functions and integrals
\begin{equation}
  \label{Psi}
  \Psi^{-1}(v)=\frac{h}{\pi}\left(\Gz\left(\left.\frac{\ln v}{2\pi i}\right|\tau\right)
-\Gz\left(\left.\hf\right|\tau\right)\frac{\ln v}{\pi i}\right),
\qquad\tau=\frac{K(1-m)}{K(m)},
\end{equation}
where $\Gz(z|\tau)$ is the Weierstrass zeta function with quasi-periods $1$ and
$i\tau$. The Riemann invariant $\rho_{\GG}=e^{2\pi\tau}$ is computed after
finding the unique solution $m\in(0,1)$ of 
\[
K(m)E(x(m)|m)-E(m)F(x(m)|m)=\frac{\pi}{2h},\quad
x(m)=\sqrt{\frac{K(m)-E(m)}{mK(m)}}.
\]
\begin{figure}[t]
  \begin{subfigure}[t]{1.7in}
\includegraphics[scale=0.125]{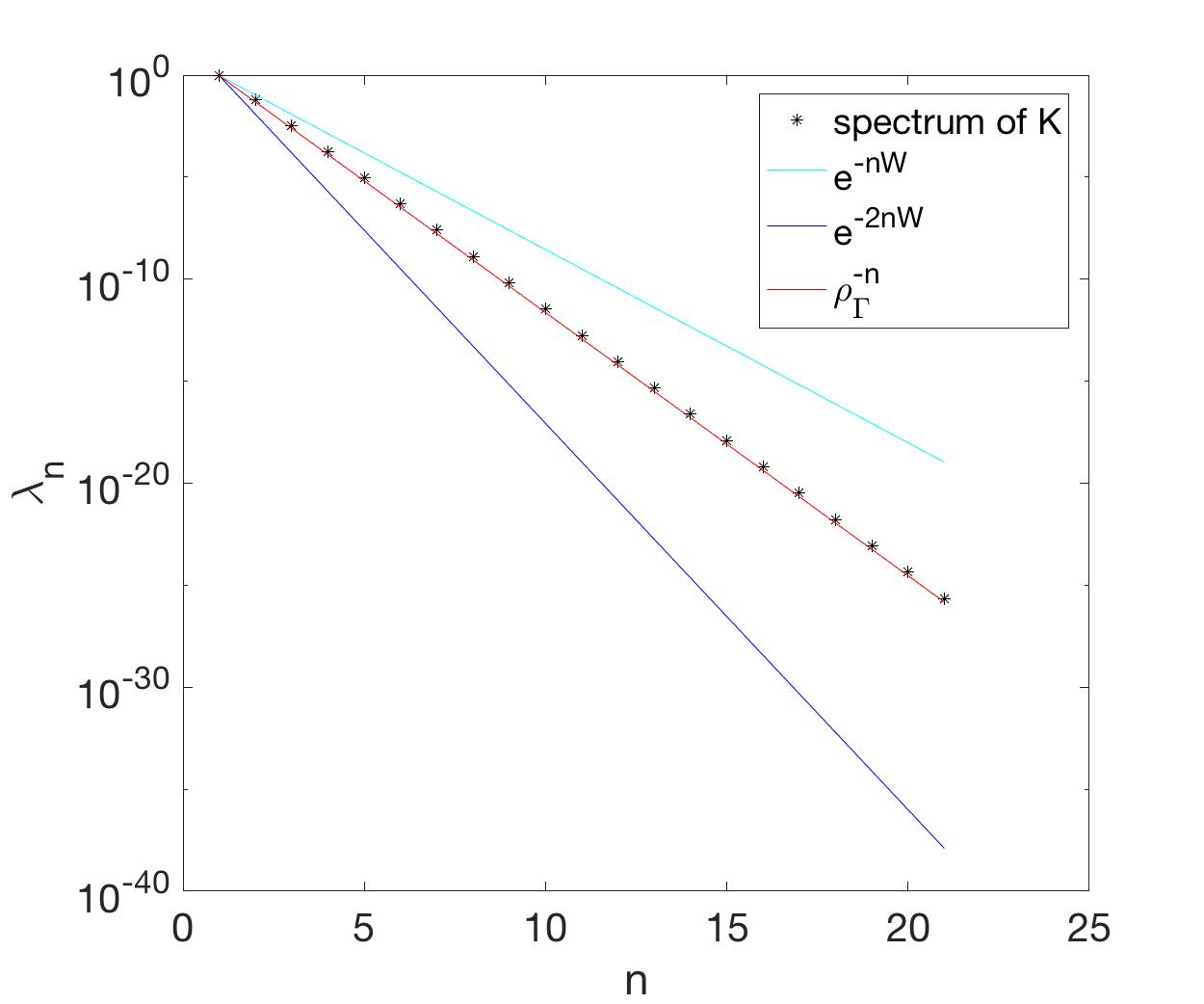}
\caption{}
\label{fig:Keigs}
  \end{subfigure}\hspace{4ex}
  \begin{subfigure}[t]{1.7in}
\includegraphics[scale=0.15]{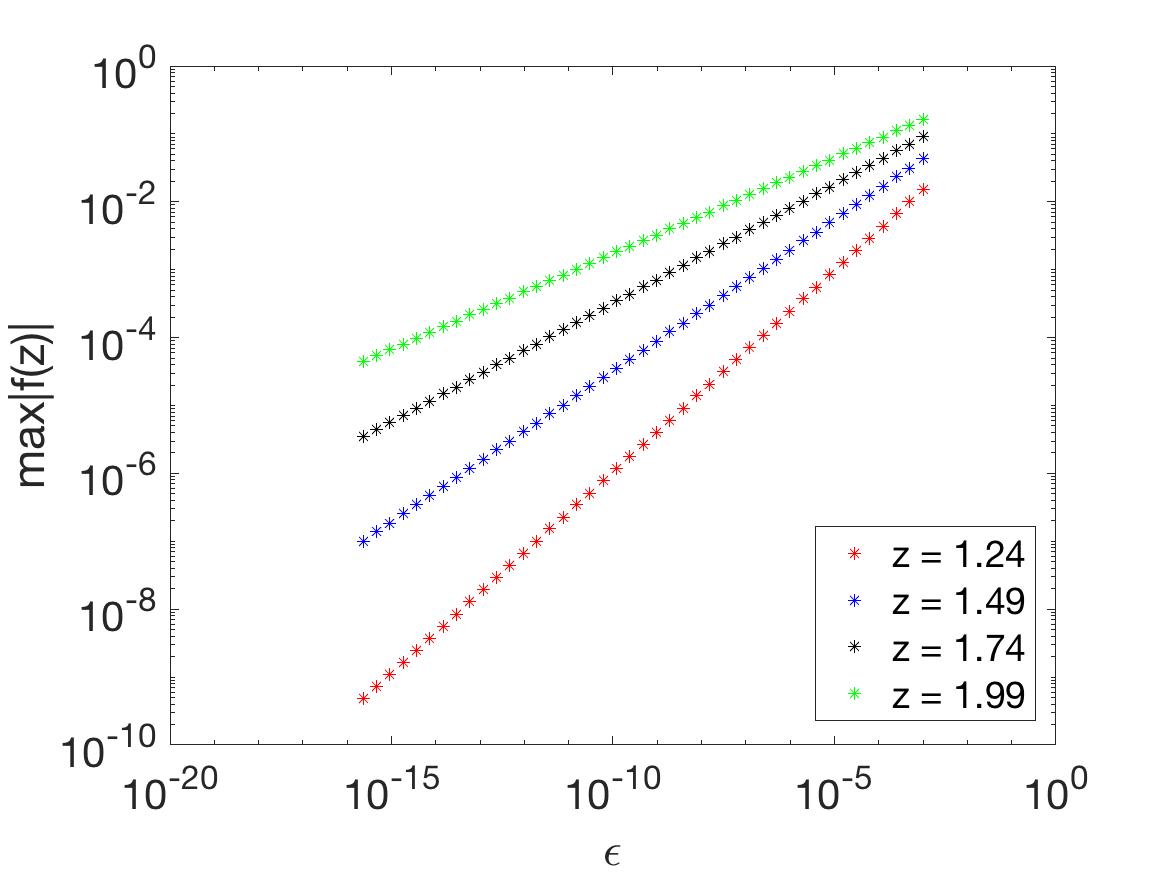}
\caption{}
\label{fig:powerlaw}
  \end{subfigure}\hspace{7ex}
\begin{subfigure}[t]{1.7in}
\includegraphics[scale=0.15]{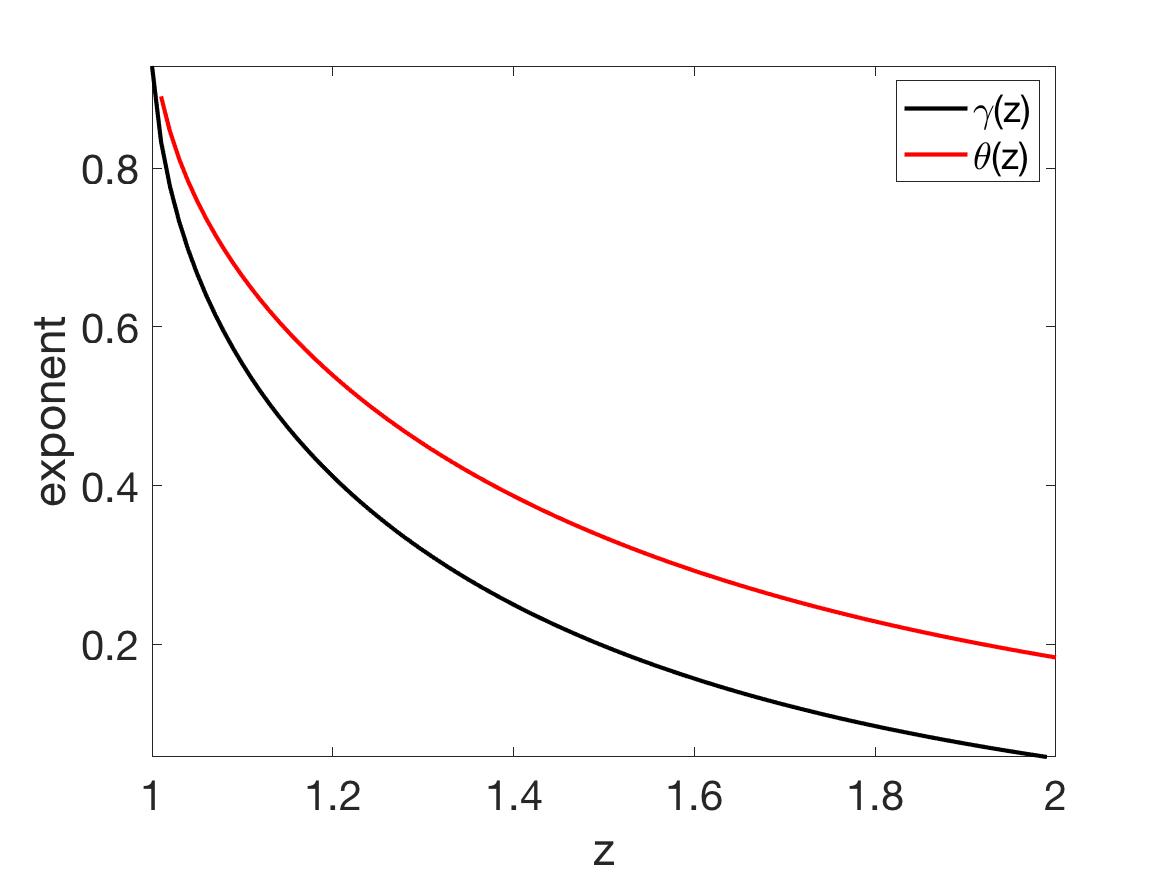}
\caption{}
\label{fig:aofz}
  \end{subfigure}
  \caption{Numerical support for the power law transition principle.}
\end{figure}

We can show by a specific construction that one cannot expect better precision
at a point $z$ than $\Ge^{\Gth(z)}$ for some $\Gth(z)\in(0,1)$, giving an
upper bound on $\Gg(z)$. This is done by mapping the explicit eigenfunction
expansion of the solution of the integral equation for the annulus problem to
the upper half-plane by the explicit conformal transformation $\Psi$ (see
\cite{grho-annulus,grho-CEMP} for details). This gives the estimate
\begin{equation} \label{gamma1}
\Gth(z) = \frac{\ln |\Psi(\bra{z})|}{\pi\tau} \in (0,1),
\end{equation}
achieved by the function
\begin{equation} \label{Uguess}
f(\Gz) = \frac{\epsilon^{2-\Gth(z)}}{\Gz+ih} \sum_{n=1}^{\infty}
\frac{\left(\bra{\Psi(z)}\Psi(\Gz)\right)^{n}}{\Ge^{2} + \rho^{-n}}\in H^{2}(\bb{H}_{+}).
\end{equation}
Figure~\ref{fig:powerlaw} shows values of $M_{\Ge,z}(z)$ as a function of
$\Ge$, supporting the power law principle \eqref{powerlaw}. We also compare
the computed exponents $\Gg_{\GG}(z)$ with the estimate (\ref{gamma1}) for
$\GG=[-1,1]+0.5i$, and extrapolation points $z+0.5i$, $z>1$.
Figure~\ref{fig:aofz} shows $\Gg(z)$ (obtained by least squares linear fit of
the data for various values of $z$, four of which are shown in
Figure~\ref{fig:powerlaw}) and the upper bound $\Gth(z)$ given by
(\ref{gamma1}). We remark that by virtue of transplanting the actual maximizer
of $|f(z)|$ from one geometry to the other, the structure of the test function
(\ref{Uguess}) resembles the optimal one (\ref{unsol}). In fact, for values of
$h>0.6$ the bound $\Gth(z)$ is virtually indistinguishable from $\Gg_{\GG}(z)$.


\subsection{Boundary} \label{SEC K boundary results}

\noindent We recall that functions in the Hardy space $H^{2}$ (see
\eqref{H2def}) are determined uniquely not only by their values on any curve
$\Gamma \subset \HH_+$, but also on $\Gamma \subset \RR$. Indeed, if $f = 0$ on $\Gamma
\subset \RR$, the Cauchy integral representation formula implies
\[
f(z) = \frac{1}{2\pi i} \int_{\Gamma^c} \frac{f(t) \D t}{t-z},\qquad
z \in \HH_+,
\] 
where $\Gamma^c=\bb{R}\setminus\Gamma$. Then $f(z)$ has analytic extension to
$\bb{C} \setminus \Gamma^c$, which vanishes on a curve $\Gamma$ inside its
domain of analyticity and therefore $f \equiv 0$. This rigidity property
suggests that we should expect the same
power law behavior of the analytic continuation error as for the curves in the
interior of $\HH_+$. 

We will consider the most basic case when $\Gamma \subset \RR$ is an interval.
(By rescaling  and translation we may assume, \WLOG, that $\Gamma = [-1,1]$).
We proceed by representing $\GG$ as a limit of interior curves $\Gamma_h =
[-1,1] + ih$ as $h \downarrow 0$. For curves $\Gamma_h$, Theorem~\ref{upper
  bound THM} can be applied and in the resulting upper bound and the integral
equation, limits, as $h \downarrow 0$, can be taken. As a result we obtain
\begin{theorem}[Boundary] \label{main boundary THM}
Let $z=z_r + i z_i \in \HH_+$ and $\epsilon \in (0,1)$. Assume $f \in H^2$ is such that $\|f\|_{H^2} \leq 1$ and $\|f\|_{L^2(-1,1)} \leq \epsilon$, then

\begin{equation} \label{main bound boundary explicit}
|f(z)| \leq \rho \epsilon^{\gamma(z)}
\end{equation}

\noindent where $\rho^{-2} = \frac{z_i}{9} \left(\arctan \frac{z_r + 1}{z_i} - \arctan \frac{z_r - 1}{z_i}\right)$ and

\begin{equation} \label{gamma(z)}
\gamma(z) = -\frac{1}{\pi} \arg \frac{z+1}{z-1} \in (0,1)
\end{equation}

\noindent is the angular size of $[-1,1]$ as seen from $z$, measured in units
of $\pi$ radians. Moreover, the upper bound \eqref{main bound boundary
  explicit} is asymptotically (in $\epsilon$) optimal and the maximizer that
attains the bound up to a multiplicative constant independent of $\epsilon$ is
\begin{equation} \label{maximizer boundary}
W(\zeta) = \epsilon \frac{p(\zeta)}{\|p\|_{L^2(-1,1)}} e^{\frac{i }{\pi} \ln \epsilon \ln \frac{1+\zeta}{1-\zeta}}, \qquad \qquad \zeta \in \HH_+
\end{equation} 

\noindent where $p(\zeta) = i / (\zeta - \overline{z})$ and $\ln$ denotes the principal branch of logarithm.

\end{theorem}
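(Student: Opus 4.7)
I would prove Theorem~\ref{main boundary THM} by a Phragm\'en--Lindel\"of multiplier argument, which captures the exponent $\gamma(z)$ directly, and complement it with the authors' limit-of-Theorem-\ref{upper bound THM} route to recover the precise constant $\rho$. The key object is
\[
\Phi_0(\zeta) \defeq \exp\!\Bigl(\frac{i\ln\epsilon}{\pi}\ln\frac{1+\zeta}{1-\zeta}\Bigr),
\]
analytic and zero-free in $\HH_+$ (branch cut on $(-\infty,-1]\cup[1,\infty)$). A direct computation of boundary values gives $|\Phi_0|\equiv 1$ on $(-1,1)$ and $|\Phi_0|\equiv 1/\epsilon$ on $\RR\setminus[-1,1]$; since $\log|\Phi_0|$ is harmonic in $\HH_+$ with these boundary data, it equals $\ln(1/\epsilon)$ times the harmonic measure of $\RR\setminus[-1,1]$ at $\zeta$. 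Hence $1\le|\Phi_0|\le 1/\epsilon$ throughout $\HH_+$, and $|\Phi_0(z)|=\epsilon^{\gamma(z)-1}$ with $\gamma(z)$ as in (\ref{gamma(z)}).

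Setting $g \defeq f/\Phi_0$, which is analytic in $\HH_+$ and satisfies $|g|\le|f|$ pointwise (hence $g\in H^2$ with $\|g\|_{H^2}\le 1$), I would read off boundary moduli to obtain
\[
\|g\|_{L^2(\RR)}^2 = \|f\|_{L^2(-1,1)}^2 + \epsilon^2\|f\|_{L^2(\RR\setminus[-1,1])}^2 \le 2\epsilon^2.
\]
The Hardy-space reproducing-kernel estimate $|g(z)|\le\|g\|_{H^2}/(2\sqrt{\pi z_i})$, combined with $|f(z)|=|g(z)||\Phi_0(z)|=|g(z)|\epsilon^{\gamma(z)-1}$, immediately yields $|f(z)|\lesssim\epsilon^{\gamma(z)}/\sqrt{z_i}$, which is the theorem's bound up to the exact constant. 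To pin down $\rho$ itself, I would run the limit procedure the authors sketch: for $\Gamma_h=[-1,1]+ih$, the kernel $\frac{i}{2\pi(x-x'+2ih)}$ of $\CK_h$ converges by Sokhotski--Plemelj to $\frac12\delta(x-x')+\frac{i}{2\pi}\,\mathrm{p.v.}\,\frac{1}{x-x'}$ as $h\downarrow 0$, turning (\ref{Ku +eps^2 u = kz}) into a dominant Cauchy singular integral equation on $(-1,1)$. This equation is solvable in closed form by a Riemann--Hilbert argument whose canonical factor is exactly $\Phi_0$, and substituting its solution into (\ref{main bound}) produces the stated $\rho$.

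Optimality of the bound is checked directly on $W$ from (\ref{maximizer boundary}): analyticity in $\HH_+$ is clear; $\|W\|_{L^2(-1,1)}=\epsilon$ because $|\Phi_0|=1$ on $(-1,1)$; $\|W\|_{H^2}^2=\epsilon^2+\|p\|_{L^2(\RR\setminus[-1,1])}^2/\|p\|_{L^2(-1,1)}^2=O(1)$ because $|\Phi_0|=1/\epsilon$ on $\RR\setminus[-1,1]$; and $|W(z)|=(|p(z)|/\|p\|_{L^2(-1,1)})\epsilon^{\gamma(z)}\simeq\rho\,\epsilon^{\gamma(z)}$. The main technical obstacle lies in the limit $h\downarrow 0$ in the integral-equation route: while $\CK_h$ is compact and (\ref{Ku +eps^2 u = kz}) is uniquely $L^2$-solvable for each $h>0$, the limiting singular operator on $(-1,1)$ is noncompact, and the solutions $u_{\epsilon,z}$ develop integrable endpoint singularities at $\pm 1$ that are precisely what produce the exponential factor in $W$. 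Rigorously controlling the convergence of $u_h(z)$ and $\|u_h\|_{L^2(\Gamma_h)}$ against these endpoint singularities, so as to transfer the inequality in (\ref{main bound}) to the limit with the sharp constant, is the core hurdle of the argument.
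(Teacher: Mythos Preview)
Your Phragm\'en--Lindel\"of multiplier argument is correct and yields a genuinely more elementary proof of the upper bound than the paper's. The paper does not argue this way: it carries out in full the limit procedure you only sketch, first proving (Theorem~\ref{upper bound boundary THM}) that the bound \eqref{main bound} for $\Gamma_h=[-1,1]+ih$ survives the limit $h\downarrow 0$ via an $L^2$-convergence lemma for $\CK_h$, so that the limiting integral equation is $\tfrac12(K+1)u+\epsilon^2u=p_z$ on $(-1,1)$ with $K$ the finite Hilbert transform. It then solves that equation not by a Riemann--Hilbert factorization but by the Koppelman--Pincus spectral diagonalization of $K$, obtaining $u$ in closed form (formula \eqref{u final answer}); the oscillatory factor $e^{-i\beta L(x)/\pi}$ that emerges is, since $\beta\sim-\ln\epsilon$, precisely your $\Phi_0$. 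The optimality verification on $W$ is done the same way in both approaches.

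Two remarks. First, your detour through the integral equation ``to pin down $\rho$'' is unnecessary: your direct argument already gives $|f(z)|\le\epsilon^{\gamma(z)}/\sqrt{2\pi z_i}$, and since the arctangent difference in $\rho^{-2}$ never exceeds $\pi$, this constant is in fact no larger than the paper's $\rho$; your first paragraph alone proves \eqref{main bound boundary explicit}. Second, what the paper's longer route buys is structural: it exhibits the boundary case as the $h\to 0$ degeneration of the interior theory, and it \emph{derives} the maximizer $W$ (up to normalization) as the solution of the limiting integral equation rather than writing it down and checking it. Your argument, by contrast, is self-contained and makes the harmonic-measure interpretation of $\gamma(z)$ explicit from the outset.
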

The theorem is proved in Section~\ref{extrapolation from boundary SECT}.

\vspace{.1in}

\begin{remark}\mbox{}
\begin{enumerate} 
\item Our explicit formulas show that the problem of predicting the value of a
  function at $z = z_0 \in \RR \setminus [-1,1]$ is ill-posed in every
  sense. Indeed, in the optimal bound \eqref{main bound boundary explicit} $\rho \to +\infty$ and $\gamma(z) \to 0$ as $z \to z_0$. 

\item The set of points $z\in\bb{H}_{+}$ for which $\Gg(z)$ is the same is an arc
of a circle passing through $-1$ and $1$ that lies in the upper half-plane. 
\end{enumerate}
\end{remark}


\section{A priori estimates} \label{SEC K norm estimates}
\setcounter{equation}{0}

\noindent \textbf{Notation:} In this section let $\|\cdot\|$ and $(\cdot, \cdot)$ be the norm and the inner product of $L^2(\Gamma)$.

\vspace{.1in}

In this section we prove several general properties of solutions of the
integral equation (\ref{Ku +eps^2 u = kz}). They show that the solution cannot
depend analytically on $\Ge$, as $\Ge\to 0$.

\begin{lemma} \label{LEM L^2 and H^2 norms of u}
Let $u_\epsilon$ solve $\CK u + \epsilon^2 u = p_z$. Then as $\epsilon \downarrow 0$
\begin{itemize}
\item[(i)] $\displaystyle \|u_\epsilon\| \to \infty$
\item[(ii)] $\dfrac{u_{\Ge}}{\|u_{\Ge}\|}\weak 0$ in $L^{2}(\GG)$
\item[(iii)] $\dfrac{u_{\Ge}}{\|u_{\Ge}\|_{H^{2}}}\weak 0$ in $L^{2}(\bb{R})$
\end{itemize}
\end{lemma}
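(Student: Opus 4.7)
My plan rests on two ingredients: the properties of $\CK$ already recorded in Remark~\ref{remark} (compactness, self-adjointness, and positivity, and hence injectivity on $L^{2}(\Gamma)$), together with the single geometric fact that $p_{z}\notin\mathrm{Range}(\CK)$. To establish the latter, observe that for any $u\in L^{2}(\Gamma)$ the integral $(\CK u)(\zeta)=\frac{1}{2\pi}\int_{\Gamma}iu(\tau)|d\tau|/(\zeta-\bar{\tau})$ defines an analytic function on $\mathbb{C}\setminus\bar{\Gamma}$, and since $z\notin\Gamma$ forces $\bar{z}\notin\bar{\Gamma}$, $\CK u$ is in particular analytic at $\bar{z}$. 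Meanwhile $p_{z}(\zeta)=i/(\zeta-\bar{z})$ has a pole at $\bar{z}$. Because $\bar{\Gamma}$ is a bounded simple $C^{1}$-arc in $\mathbb{H}_{-}$, the set $\mathbb{C}\setminus(\bar{\Gamma}\cup\{\bar{z}\})$ is connected, so any coincidence $\CK u=p_{z}$ on $\Gamma$ would propagate by analytic continuation to a punctured neighborhood of $\bar{z}$, where $|p_{z}|\to\infty$ but $\CK u$ remains bounded---a contradiction.

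\textbf{Parts (i) and (ii).} For (i) I argue by contradiction. If $\|u_{\epsilon_{k}}\|$ stayed bounded along a subsequence $\epsilon_{k}\to 0$, I would extract a weakly convergent subsequence $u_{\epsilon_{k}}\rightharpoonup u$ in $L^{2}(\Gamma)$. Compactness of $\CK$ upgrades this to $\CK u_{\epsilon_{k}}\to\CK u$ in norm, while $\epsilon_{k}^{2}u_{\epsilon_{k}}\to 0$ in norm by boundedness. Passing to the limit in $\CK u_{\epsilon_{k}}+\epsilon_{k}^{2}u_{\epsilon_{k}}=p_{z}$ would give $\CK u=p_{z}$, contradicting the previous paragraph. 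Part (ii) follows by the same mechanism applied to the normalized sequence $v_{\epsilon}=u_{\epsilon}/\|u_{\epsilon}\|$, which satisfies $\CK v_{\epsilon}+\epsilon^{2}v_{\epsilon}=p_{z}/\|u_{\epsilon}\|$ with right-hand side tending to zero by (i). Any weak subsequential limit $v$ then satisfies $\CK v=0$, so $v=0$ by injectivity, and uniqueness of the weak limit gives $v_{\epsilon}\rightharpoonup 0$.

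\textbf{Part (iii).} The delicate point is bridging $L^{2}(\Gamma)$ and $L^{2}(\mathbb{R})$. I first note the trace inequality $\|u\|_{L^{2}(\Gamma)}\le C(\Gamma)\|u\|_{H^{2}}$ for $u\in H^{2}(\mathbb{H}_{+})$, with $C(\Gamma)$ controlled by $\mathrm{dist}(\Gamma,\mathbb{R})>0$; this is a direct consequence of the Cauchy integral formula and Cauchy--Schwarz. Combined with (i), it forces $\|u_{\epsilon}\|_{H^{2}}\to\infty$. Let $w_{\epsilon}=u_{\epsilon}/\|u_{\epsilon}\|_{H^{2}}$, a unit vector in $L^{2}(\mathbb{R})$, and extract a subsequential weak limit $w_{\epsilon_{k}}\rightharpoonup w$ in $L^{2}(\mathbb{R})$; since $H^{2}(\mathbb{H}_{+})$ is weakly closed in $L^{2}(\mathbb{R})$, also $w\in H^{2}$. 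For each $\zeta\in\mathbb{H}_{+}$, the Cauchy kernel $1/(t-\zeta)$ lies in $L^{2}(\mathbb{R})$, so weak convergence yields the pointwise identity $w_{\epsilon}(\zeta)\to w(\zeta)$ and the uniform bound $|w_{\epsilon}(\zeta)|\lesssim (\Im\zeta)^{-1/2}$, which is uniform for $\zeta\in\Gamma$. Dominated convergence then upgrades pointwise to strong convergence $w_{\epsilon}|_{\Gamma}\to w|_{\Gamma}$ in $L^{2}(\Gamma)$. Dividing the integral equation by $\|u_{\epsilon}\|_{H^{2}}$ and passing to the limit produces $\CK(w|_{\Gamma})=0$, whence $w|_{\Gamma}=0$ by injectivity of $\CK$. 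Analytic continuation then forces $w\equiv 0$ on $\mathbb{H}_{+}$ and hence $w=0$ a.e.\ on $\mathbb{R}$, so uniqueness of the subsequential weak limit gives (iii).

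\textbf{Main obstacle.} The genuinely nontrivial step is the range-exclusion $p_{z}\notin\mathrm{Range}(\CK)$ in the opening paragraph, which rests on the analytic-continuation argument through a punctured neighborhood of $\bar{z}$ across $\mathbb{H}_{+}\cup(\mathbb{H}_{-}\setminus\bar{\Gamma})$. Everything else is a routine compactness/injectivity exercise, with the additional bookkeeping in (iii) of running it inside the Hardy-space framework via the Cauchy integral formula and dominated convergence.
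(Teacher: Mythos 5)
Your proof is correct and reaches the same three conclusions, but it takes a cleaner and more self-contained route than the paper's. In part~(i) the paper diagonalizes $\CK$, writes $\|u_\epsilon\|^2=\sum_n|\pi_n|^2/(\lambda_n+\epsilon^2)^2$, and invokes Fatou's lemma together with the (forward-referenced) divergence $\sum_n|\pi_n|^2/\lambda_n^2=+\infty$ from Lemma~\ref{upper bound sums THM}; you instead extract a weakly convergent subsequence and use compactness of $\CK$ to pass to the limit directly in $\CK u_{\epsilon_k}+\epsilon_k^2u_{\epsilon_k}=p_z$, so the same range-exclusion $p_z\notin\mathcal{R}(\CK)$ delivers the contradiction without touching the spectral sums. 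In part~(iii) the paper says only ``repeating the argument in the proof of Part~(ii),'' leaving implicit the bridge between weak convergence in $H^2(\HH_+)$ and the convergence of $\CK v_{\epsilon_k}$ on $\Gamma$; your argument supplies this bridge explicitly, via the pointwise estimate $|w_\epsilon(\zeta)|\lesssim(\Im\zeta)^{-1/2}$ coming from Cauchy's formula, dominated convergence on the compact curve $\Gamma$, and boundedness of $\CK$. Both proofs hinge on the same two facts: the range-exclusion argument for $p_z$ (which you spell out through connectedness of $\mathbb{C}\setminus(\bar\Gamma\cup\{\bar z\})$, mirroring the paper's own argument in the proof of Lemma~\ref{upper bound sums THM}) and the injectivity of $\CK$ (which the paper proves via the Sokhotski--Plemelj jump in Lemma~\ref{lem:desnisty}); you cite injectivity from Remark~\ref{remark}, which is slightly glib since ``positive'' alone does not give injectivity, but this is a documented property of $\CK$ in the paper and does not affect correctness. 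Net effect: your version is a genuine and arguably tidier alternative, trading the spectral representation for soft functional-analytic compactness and fleshing out the part~(iii) limiting step the paper leaves terse.
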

\begin{proof}
Part (i). Recalling formula (\ref{unsol}), we have
\begin{equation}
  \label{L2normu}
  \|u_\epsilon\|^{2}=\sum_{n=1}^\infty|u_{n}|^{2}=\sum_{n=1}^\infty \frac{|\pi_n|^2}{(\Ge^{2}+\lambda_{n})^2},
\end{equation}
and applying Lemma Fatou we conclude that
\[
\sum_{n=1}^\infty\frac{|\pi_n|^2}{\lambda_{n}^2}\le\limi_{\Ge\to
  0}\|u_{\Ge}\|^{2} .
\]
Hence, boundedness along any subsequence of $\|u_\epsilon\|$ implies
convergence of the series above, which leads to a contradiction, as observed
in Lemma~\ref{upper bound sums THM}.

Part (ii). Let $v_{\Ge}=u_{\Ge}/\|u_{\Ge}\|$. Extracting a weakly convergent
subsequence in $L^2(\Gamma)$ and passing to the limit in
\[
\CK v_{\Ge}+\Ge^{2}v_{\Ge}=\frac{p_{z}}{\|u_{\Ge}\|},
\]
using Part (i) of the lemma, we obtain the equation for the weak limit
$v_{0}$: $\CK v_{0}=0$. Hence, by Lemma~\ref{lem:desnisty},  $v_{0}=0$.
Since every weakly convergent subsequence of $v_{\Ge}$ has a zero limit, the
entire family $v_{\Ge}$ converges weakly to 0. 

Part (iii). Let now $v_\epsilon=u_{\Ge}/\|u_{\Ge}\|_{H^{2}}$, and let
$v_{\Ge_{k}}\weak v_{0}$ in $H^{2}(\bb{H}_{+})$. Then passing to the limit in
(\ref{H2rep}) we obtain
\[
v_{0}(\Gz)=\nth{2\pi i}\int_{\bb{R}}\frac{v_{0}(x)dx}{x-\Gz}.
\]
Observing that $\|u_{\Ge}\|_{H^{2}} \geq c \|u_{\Ge}\| \to \infty$, and
repeating the argument in the proof of Part (ii) of the lemma, we get
$\CK v_0 = 0$ on $\Gamma$. Hence $v_0 = 0$ on $\Gamma$, and by analyticity, $v_0 =
0$ on $\RR$.
\end{proof}

Lemma~\ref{LEM L^2 and H^2 norms of u} has a number of immediate corollaries,
especially when combined with formula (\ref{Pyth}) (see
Corollary~\ref{cor:Pyth}) and the Cauchy representation formula for $H^{2}$
functions (\ref{H2rep}).
Using the Cauchy representation formula (\ref{H2rep}) part~(iii) implies that
$u_{\Ge}(\Gz)/\|u_{\Ge}\|_{H^{2}}\to 0$, as $\Ge\to 0$ for all
$\Gz\in\bb{H}_{+}$. In particular, $u_{\Ge}(z)/\|u_{\Ge}\|_{H^{2}}\to 0$, as
$\Ge\to 0$. Applying this fact to (\ref{Pyth}) we conclude that
$\|u_{\Ge}\|_{H^{2}}=o(\Ge^{-2})$ and that
\begin{equation}
  \label{oeps}
  \|u_{\Ge}\|^{2}=o(\|u_{\Ge}\|_{H^{2}})=o(\Ge^{-2}),
\end{equation}
showing that $\|u_{\Ge}\|=o(\Ge^{-1})$ and hence
$u_{\Ge}(z)=o(\Ge^{-2})$. From the integral
equation (\ref{Ku +eps^2 u = kz}) we obtain
\begin{equation}
  \label{canceff}
  p_{z}-\CK u_{\Ge,z}=\Ge^{2}u_{\Ge,z}=o(\Ge)
\end{equation}
in $L^{2}(\GG)$.
Returning to the Cauchy representation formula we
conclude from (\ref{oeps}): $\Ge^{2}u_{\Ge}(\Gz)\to
0$, as $\Ge\to 0$ for all $\Gz\in\bb{H}_{+}$. Hence, $p_{z}-\CK u_{\Ge,z}\to 0$
pointwise in $\bb{H}_{+}$.



\section{Justification of the power law transition principle} \label{conjecture implies optimality SECTION}
\setcounter{equation}{0}
In this section we prove Theorem~\ref{connections THM} under slightly more
general assumptions. It shows how exponential decay of eigenvalues and
eigenfunctions gives rise to power law estimates (\ref{powerlaw}).
Throughout this section $\|\cdot\|$ and $(\cdot, \cdot)$ denote the norm and the inner product of the space $L^2(\Gamma)$.

\vspace{.1in}

\subsection{Spectral representation of $u_{\Ge,z}(z)$}
We begin by rewriting the value $u_{\Ge,z}(z)$, in terms of $\lambda_n$ and
coefficients $\pi_n$ of $p_z$ in the eigenbasis $\{e_n\}$.
\begin{lemma} \label{upper bound sums THM}
Let $u_{\Ge,z}$ be the solution of \eqref{Ku +eps^2 u = kz}. Then
\begin{equation}
  \label{uofz}
2\pi u_{\epsilon, z}(z)=\sum_{n=1}^\infty\frac{|\pi_n|^2}{\lambda_n(\lambda_n+\epsilon^2)}.
\end{equation}
\end{lemma}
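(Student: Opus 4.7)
The plan is to diagonalize the integral equation (\ref{Ku +eps^2 u = kz}) in the $L^{2}(\GG)$-eigenbasis $\{e_{n}\}$ of $\CK$, extend it analytically to $\Gz = z$, and rewrite each resulting term spectrally. On $\GG$ the diagonalization (\ref{unsol}) gives $u_{n} = \pi_{n}/(\lambda_{n}+\epsilon^{2})$; since both $\CK u_{\epsilon, z}$ and $p_z$ lie in $H^{2}(\bb{H}_{+})$ (Remark \ref{remark}), the equation extends to $\Gz=z$ and rearranges to $\epsilon^{2} u_{\epsilon, z}(z) = p_z(z) - (\CK u_{\epsilon, z})(z)$.

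The term $(\CK u_{\epsilon, z})(z)$ is read off immediately once one observes that the integral kernel $i/(2\pi(z-\bar\tau))$ equals $\overline{p_z(\tau)}/(2\pi)$, giving
\[
(\CK u_{\epsilon, z})(z) = \frac{1}{2\pi}(u_{\epsilon, z}, p_z)_{L^{2}(\GG)} = \frac{1}{2\pi}\sum_{n}\frac{|\pi_{n}|^{2}}{\lambda_{n}+\epsilon^{2}}
\]
by Parseval in the eigenbasis.

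The harder task, and the main obstacle I foresee, is to identify $p_z(z)$ with $(2\pi)^{-1}\sum_{n}|\pi_{n}|^{2}/\lambda_{n}$. My plan is to combine the reproducing kernel identity $p_z(z) = (2\pi)^{-1}\|p_z\|_{H^{2}}^{2}$, immediate from $p_z = 2\pi K_{z}$, with a Parseval-type formula
\[
\|f\|_{H^{2}}^{2} = \sum_{n} \lambda_{n}^{-1}\,|(f, e_{n})_{L^{2}(\GG)}|^{2}, \qquad f \in H^{2}(\bb{H}_{+}).
\]
I derive the latter by factoring $\CK = R R^{*}$, where $R : H^{2}(\bb{H}_{+}) \to L^{2}(\GG)$ is the restriction, taking the SVD of $R$, and using that $\ker R = 0$ by rigidity (Section~\ref{SEC K boundary results}) to ensure that the right singular vectors $f_{n} = R^{*}e_{n}/\sqrt{\lambda_{n}}$ form a complete orthonormal system in $H^{2}(\bb{H}_{+})$. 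Applied to $f = p_z$, this step simultaneously yields the desired equality and the finiteness of $\sum |\pi_{n}|^{2}/\lambda_{n}$, i.e.\ the first half of (\ref{pi_n_evidence}).

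Once both terms are in spectral form, the partial-fraction identity $1/\lambda_{n} - 1/(\lambda_{n}+\epsilon^{2}) = \epsilon^{2}/[\lambda_{n}(\lambda_{n}+\epsilon^{2})]$ collapses the difference $p_z(z) - (\CK u_{\epsilon, z})(z)$ into the single sum on the right-hand side of the lemma, and dividing by $\epsilon^{2}$ concludes the proof. The SVD step is the non-routine one: it is where the completeness of $\{f_n\}$ in $H^2(\bb{H}_+)$ and the injectivity of $R$ must be handled carefully.
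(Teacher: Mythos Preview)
Your proposal is correct and follows essentially the same approach as the paper: both reduce the lemma to the identity $\|p_z\|_{H^2}^2 = \sum_n |\pi_n|^2/\lambda_n$, which the paper proves by constructing the auxiliary Hilbert space $H_+$ (Lemma~\ref{lem:Hplus}) via the adjoint relation $(u,f)_{L^2(\Gamma)} = (\CK u, f)_{H^2}$, while you phrase that same relation as the factorization $\CK = RR^*$ and invoke the SVD of $R$. The mathematical content is identical---your right singular vectors $f_n = R^*e_n/\sqrt{\lambda_n}$ are exactly the paper's observation that $\{e_n\}$ is orthogonal in $H^2$ with $\|e_n\|_{H^2}^2 = 1/\lambda_n$, and your completeness via $\ker R = 0$ is the same rigidity argument underlying the paper's Bessel-inequality step.
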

\begin{proof}
   Observe that 
   
\begin{equation} \label{(u,p)=Ku}
(u,p_{z})=\int_{\Gamma} u(\tau)\frac{i}{z-\overline{\tau}}|d\tau|=2\pi(\CK u)(z)
\end{equation}

\noindent for any $u \in L^2(\Gamma)$, therefore for the solution $u_{\epsilon, z}$ of  \eqref{Ku +eps^2 u = kz} we have
\begin{equation} \label{u(z) from int eq}
2\pi u_{\epsilon, z}(z) = \frac{1}{\epsilon^2}(2\pi p_{z}(z)-2\pi(\CK u_{\epsilon, z})(z))=
\frac{1}{\epsilon^2}\left( \frac{\pi}{\Im z} - (u_{\epsilon, z}, p_z) \right).
\end{equation}
Since
\[
(u_{\epsilon, z}, p_z)=\sum_{n=1}^\infty\frac{|\pi_n|^2}{\lambda_n+\epsilon^2},
\]
it is easy to see that \eqref{uofz} is equivalent to
\begin{equation} \label{(K^-1 p_z, p_z) and pi_j}
\frac{\pi}{\Im z} =  \sum_{n=1}^\infty \frac{|\pi_n|^2}{\lambda_n}.
\end{equation}
Formally the series on the \rhs\ of \eqref{(K^-1 p_z, p_z) and pi_j} can be
written as
\[
\sum_{n=1}^\infty \frac{|\pi_n|^2}{\lambda_n}=(\CK^{-1}p_{z},p_{z}).
\]
However, it is easy to see that $p_{z}$ is not in the range of $\CK$. Indeed,
for any $u\in L^{2}(\GG)$ its image $f(\Gz)=(\CK u)(\Gz)$ has an analytic
extension to $\bb{C}\setminus\bra{\GG}$, while $p_{z}(\Gz)$ has a pole at
$\bra{z}\not\in\bra{\GG}$. As a consequence
\begin{equation}
  \label{diverge}
  \sum_{n=1}^{\infty}\frac{|\pi_{n}|^{2}}{\Gl^{2}_{n}}=+\infty,
\end{equation}
since otherwise the function
\[
v=\sum_{n=1}^{\infty}\frac{\pi_{n}}{\Gl_{n}}e_{n}
\]
would belong to $L^{2}(\GG)$ and would have the property $\CK v=p_{z}$.

The key to understanding the operator $\CK$
is the observation that its range 
\[
\CR(\CK)=\{f:f=\CK u,\ u\in L^{2}(\GG)\}
\]
consists of functions that have an analytic extension to functions in
$H^{2}(\bb{H}_{+})$, moreover for any $f\in H^{2}(\bb{H}_{+})$ and
$u\in L^{2}(\GG)$ we have
\begin{equation}
  \label{keyK}
  (u,f)=(\CK u,f)_{H^{2}}.
\end{equation}
Indeed, changing the order of integration we obtain
\begin{equation*}
(\CK u,f)_{H^{2}}=\int_{\RR} (\CK u)(x)\bra{f(x)}\D x = \int_{\Gamma}u(\tau) \bra{
\left(\nth{2\pi i}\int_\RR \frac{f(x)\D x}{x-\tau}\right)} |\D \tau|=
\int_{\Gamma} u(\tau) \overline{f(\tau)} |\D \tau| =(u, f),
\end{equation*}
where we have used the Cauchy representation formula for $H^{2}(\bb{H}_{+})$
functions in terms of their boundary values:
\begin{equation}
  \label{H2rep}
  f(\Gz)=\nth{2\pi i}\int_\RR \frac{f(x)\D x}{x-\Gz},\quad\Gz\in\bb{H}_{+}.
\end{equation}
An immediate corollary of (\ref{keyK}) is
\begin{lemma}
\label{lem:desnisty}
  $\CR(\CK)$ is dense in $L^{2}(\GG)$.
\end{lemma}
\begin{proof}
  Suppose $u\in L^{2}(\GG)$ is orthogonal to $\CR(\CK)$. Then for any $v\in
  L^{2}(\GG)$ we have $(\CK v,u)=0$. Choosing $v=u$ we obtain
\[
0=(u,\CK u)=\|\CK u\|^{2}_{H^{2}},
\]
which implies that $\CK u=0$. This implies that $u=0$ in $L^{2}(\GG)$. This
conclusion is obtained by observing that for any $v\in L^{2}(\GG)$ the image 
$f(\Gz)=(\CK v)(\Gz)$ has an analytic
extension to $\bb{C}\setminus\bra{\GG}$ and by the Sokhotski-Plemelj formula
\begin{equation}
  \label{SP}
  \jump{\CK v}_{\bra{\GG}}(s)=\frac{v(\tau(s))}{\bra{\dot{\tau}(s)}},
\end{equation}
where $\tau(s)$ is the arc-length parametrization of $\GG$.  For an oriented curve
$C\subset\bb{C}$ with parametrization $\tau(s)$ the notation $\jump{f}_{C}(s)$ means
\[
\jump{f}_{C}(s)=\lim_{\Gz\to\tau(s)^{+}}f(\Gz)-
\lim_{\Gz\to\tau(s)^{-}}f(\Gz),
\]
where $\Gz\to\tau(s)^{+}$ means that the vectors
$\dot{\tau}(s)$, $\Gz-\tau(s)$ form a positively oriented pair.

Thus, if $\CK u=0$ in $L^{2}(\GG)$ it follows that the unique analytic
extension of $\CK u$ is a zero function and (\ref{SP}) implies $u=0$.
\end{proof}
We remark that in the course of the proof of the Lemma we have also shown that
$\CK$ is a positive operator with trivial null-space.
We proceed now to the proof of (\ref{(K^-1 p_z, p_z) and pi_j}) by showing
that it is a consequence of a more general and elegant result about the
operator $\CK$ (see Lemma~\ref{lem:Hplus} below).

On a dense subspace $\CR(\CK)\subset L^{2}(\GG)$ we define a new
inner product
\[
(f,g)_{+}=(f,g)_{H^{2}}=(\CK u,v)_{L^{2}(\GG)},\qquad f=\CK u,\quad g=\CK v,
\]
where formula (\ref{keyK}) has been used. Suppose that $f_{n}=(f,e_{n})$ and
$g_{n}=(g,e_{n})$, then $f_{n}=\Gl_{n}u_{n}$ and $g_{n}=\Gl_{n}v_{n}$, where
$u_{n}=(u,e_{n})$ and $v_{n}=(v,e_{n})$. Then
\[
(f,g)_{+}=\sum_{n=1}^{\infty}\Gl_{n}u_{n}\bra{v_{n}}=\sum_{n=1}^{\infty}\frac{f_{n}\bra{g_{n}}}{\Gl_{n}}.
\]
We now define the Hilbert space $H_{+}$ as the completion of $\CR(\CK)$ with
respect to $\|\cdot\|_{+}$. Then
\begin{equation*}
H_+ = \left\{ f \in L^2(\Gamma) : \|f\|_+^2 := \sum_{n=1}^\infty \frac{|f_n|^2}{\lambda_n} < \infty \right\}
\end{equation*}
is a dense subspace in $H_{0}=L^2(\Gamma)$. In particular
$\|f\|^{2}_{+}\ge\Gl_{1}^{-1}\|f\|^{2}$.
\begin{lemma}
  \label{lem:Hplus}
$H_{+}$ consists of those functions in $L^{2}(\GG)$ that have (a necessarily
unique) extension to functions in $H^{2}(\bb{H}_{+})$. Moreover,
\begin{equation}
  \label{plusip}
  (f,g)_{+}=(f,g)_{H^{2}}.
\end{equation}
\end{lemma}
\begin{proof}
  Formula (\ref{plusip}) holds for all $\{f,g\}\subset\CR(\CK)$ by
  definition. Suppose that $f\in H_{+}$. We define
\[
\phi_{N}=\sum_{j=1}^{N}f_{n}e_{n}.
\]
Obviously, $\phi_{N}\in\CR(\CK)\subset H^{2}$, since each eigenfunction
$e_{n}$ is in $\CR(\CK)$. But then by (\ref{plusip}), $\phi_{N}$ would be a Cauchy sequence in
the $H^{2}$ norm and would have a limit $\phi_{\infty}\in H^{2}$. By
construction $\phi_{N}\to f$ in $H_{+}$. In particular $\phi_{N}\to f$ in
$L^{2}(\GG)$, but $\phi_{N}\to\phi_{\infty}$ in $H^{2}$ and therefore in
$L^{2}(\GG)$. Hence, $f=\phi_{\infty}$ on $\GG$ and $f$ has the extension
$\phi_{\infty}\in H^{2}$. Thus, if $\{f,g\}\subset H_{+}$ then $f$ and $g$
have extensions to $\bb{H}_{+}$ that are in $H^{2}(\bb{H}_{+})$. Moreover, if
\[
\psi_{N}=\sum_{j=1}^{N}g_{n}e_{n},
\]
then we can pass to the limit on both sides of the equality
\[
(\phi_{N},\psi_{N})_{+}=(\phi_{N},\psi_{N})_{H^{2}}
\]
and obtain (\ref{plusip}). To finish the proof we only need to show that
restrictions to $\GG$ of $H^{2}$ functions are in $H_{+}$. According to
(\ref{keyK})
\[
(e_{n},e_{m})_{H^{2}}=\nth{\Gl_{n}}(\CK
e_{n},e_{m})_{H^{2}}=\nth{\Gl_{n}}(e_{n},e_{m})=
\frac{\Gd_{mn}}{\Gl_{n}}.
\]
Hence the eigenbasis functions $e_{n}$ also form an orthogonal system in
$H_{+}$, but they are no longer orthonormal. We now take $f\in H^{2}$ and
repeat the proof of Bessel's inequality, using the orthogonality of $\{e_{n}\}$:
\[
0\le\|f-\sum_{n=1}^{N}f_{n}e_{n}\|_{H^{2}}^{2}=\|f\|_{H^{2}}^{2}
-2\sum_{n=1}^{N}\bra{f_{n}}(f,e_{n})_{H^{2}}+\sum_{n=1}^{N}\frac{|f_{n}|^{2}}{\Gl_{n}}=
\|f\|_{H^{2}}^{2}-\sum_{n=1}^{N}\frac{|f_{n}|^{2}}{\Gl_{n}},
\]
since, according to (\ref{keyK}),
\[
(f,e_{n})_{H^{2}}=\nth{\Gl_{n}}(f,\CK e_{n})_{H^{2}}=\frac{f_{n}}{\Gl_{n}}.
\]
Thus,
\[
\sum_{n=1}^{N}\frac{|f_{n}|^{2}}{\Gl_{n}}\le\|f\|_{H^{2}}^{2}
\]
and hence the series is convergent, proving that the restriction of $f\in
H^{2}$ to $\GG$ belongs to $H_{+}$. The Lemma is now proved.
\end{proof}
\begin{corollary}
  \begin{equation}
    \label{converge}
      \frac{\pi}{\Im z}=\|p_{z}\|_{H^{2}}^{2}=\|p_{z}\|_{+}^{2}=
\sum_{n=1}^{\infty}\frac{|\pi_{n}|^{2}}{\Gl_{n}},
  \end{equation}
establishing (\ref{(K^-1 p_z, p_z) and pi_j}) and hence \eqref{uofz}, which in
the new notation of inner product in $H_{+}$ can also be written as
\begin{equation}
  \label{uplus}
  2\pi u_{\epsilon, z}(z)=(u_{\epsilon, z},p_{z})_{+}=(u_{\epsilon, z},p_{z})_{H^{2}}.
\end{equation}
\end{corollary}
Lemma~\ref{upper bound sums THM} is now proved.
\end{proof} 
\begin{corollary}
\label{cor:Pyth}
By Lemma~\ref{uofz}
\[
2\pi u_{\epsilon, z}(z)=\sum_{n=1}^\infty\frac{|\pi_n|^2}{\lambda_n(\lambda_n+\epsilon^2)}=
\sum_{n=1}^\infty\frac{|\pi_n|^2}{(\lambda_n+\epsilon^2)^{2}}+
\Ge^{2}\sum_{n=1}^\infty\frac{|\pi_n|^2}{\Gl_{n}(\lambda_n+\epsilon^2)^{2}}
=\|u_{\epsilon, z}\|^{2}+\Ge^{2}\|u_{\epsilon, z}\|^{2}_{+},
\]
which in view of Lemma~\ref{lem:Hplus} proves
\begin{equation}
  \label{Pyth}
  2\pi u_{\epsilon, z}(z)=\|u_{\epsilon, z}\|_{L^2(\Gamma)}^2+\epsilon^{2}\|u_{\epsilon, z}\|_{H^2}^2.
\end{equation}
\end{corollary}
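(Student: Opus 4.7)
The plan is to derive the identity (\ref{Pyth}) directly from the spectral representation established in Lemma~\ref{upper bound sums THM}, combined with the Hilbert space identification $H_{+}\cong H^{2}(\bb{H}_{+})$ proved in Lemma~\ref{lem:Hplus}. The key algebraic observation is the trivial partial fraction decomposition
\[
\frac{1}{\Gl_{n}(\Gl_{n}+\Ge^{2})}=\frac{1}{(\Gl_{n}+\Ge^{2})^{2}}+\frac{\Ge^{2}}{\Gl_{n}(\Gl_{n}+\Ge^{2})^{2}},
\]
which I would verify by bringing the right-hand side to a common denominator $\Gl_{n}(\Gl_{n}+\Ge^{2})^{2}$ and noting that $(\Gl_{n}+\Ge^{2})+0 = \Gl_{n}+\Ge^{2}$ in the numerator.

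Applying this identity termwise to the series in (\ref{uofz}) splits $2\pi u_{\Ge,z}(z)$ into two sums. Using the explicit spectral formula $u_{n}=\pi_{n}/(\Gl_{n}+\Ge^{2})$ from (\ref{unsol}), I would identify the first sum as $\sum_{n}|u_{n}|^{2}=\|u_{\Ge,z}\|_{L^{2}(\GG)}^{2}$ via Parseval's identity in the $\{e_{n}\}$ basis, and the second as $\Ge^{2}\sum_{n}|u_{n}|^{2}/\Gl_{n}=\Ge^{2}\|u_{\Ge,z}\|_{+}^{2}$ by the definition of the $H_{+}$ norm.

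The final step is to replace $\|u_{\Ge,z}\|_{+}$ with $\|u_{\Ge,z}\|_{H^{2}}$, which is exactly the content of Lemma~\ref{lem:Hplus}. Before invoking it, I would want to check that $u_{\Ge,z}$ actually lies in $H_{+}$, i.e.\ that $\sum|\pi_{n}|^{2}/(\Gl_{n}(\Gl_{n}+\Ge^{2})^{2})<\infty$; this is immediate from the bound $(\Gl_{n}+\Ge^{2})^{2}\ge\Ge^{4}$ together with the convergence of $\sum|\pi_{n}|^{2}/\Gl_{n}=\pi/\Im z$ established in (\ref{converge}).

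There is no real obstacle here since the heavy lifting has been done in Lemma~\ref{upper bound sums THM} and Lemma~\ref{lem:Hplus}; the corollary is essentially a bookkeeping exercise in regrouping the single series (\ref{uofz}) into two pieces and reinterpreting each piece as a squared norm. The only point requiring a moment of care is that both norms appearing on the right-hand side of (\ref{Pyth}) genuinely make sense for the specific solution $u_{\Ge,z}$, which follows from the qualitative integrability already recorded in (\ref{pi_n_evidence}).
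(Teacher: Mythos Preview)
Your proposal is correct and follows exactly the paper's own argument: the same partial-fraction identity, the same identification of the two resulting sums as $\|u_{\Ge,z}\|_{L^{2}(\GG)}^{2}$ and $\Ge^{2}\|u_{\Ge,z}\|_{+}^{2}$, and the same appeal to Lemma~\ref{lem:Hplus}. Your added check that $u_{\Ge,z}\in H_{+}$ via (\ref{converge}) is a welcome bit of care the paper leaves implicit.
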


\begin{remark}
  \label{rem:Kinv}
For all $\{f,g\}\subset H_{+}$ we can formally write
\begin{equation}
  \label{Kinvpl}
  (f,g)_{+}=\sum_{n=1}^{\infty}\frac{f_{n}\bra{g_{n}}}{\Gl_{n}}=(\CK^{-1}f,g).
\end{equation}
If $f=\CK u$ for some $u\in L^{2}(\GG)$, the \rhs\ of (\ref{Kinvpl}) is equal
to $(u,g)$. Otherwise, $(f,g)_{+}$ will serve as a definition\footnote{The
  theory of rigged Hilbert spaces \cite{gesh64} can be used to
  define Hilbert space $H_{-}$ of generalized functions where $\CK^{-1}f$
  belongs for all $f\in H_{+}$. This space is naturally identified with the
  dual $(H_{+})^{*}$, so that $(\CK^{-1}f,g)$ is understood as the duality
  pairing between $\CK^{-1}f\in H_{-}=(H_{+})^{*}$ and $g\in H_{+}$. Most
  commonly this theory is used to define negative Sobolev spaces $W^{-m,2}$,
  where the role of $\CK^{-1}$ is played by an elliptic differential
  operator.} 
of $(\CK^{-1}f,g)$.
\end{remark}

\subsection{From exponential decay to power law} 
\label{SEC K from exp decay to power law}
In this section we prove Theorem~\ref{connections THM}. We begin by showing that the
first part of the theorem holds under substantially weaker assumptions than
(\ref{lambda_n and pi_n asymp}).
Here we suppress dependence on $\Ge$ and $z$ from the notation. So that
$u_{\Ge,z}$ is denoted simply by $u$. As before, $\Gl_{n}$ denote the
eigenvalues of $\CK$ and $\pi_{n}=(p_{z},e_{n})$ are coordinates of $p_{z}$ in
the eigenbasis of $\CK$.
\begin{lemma}
  \label{lem:BDconj}
  Let $u$ solve (\ref{Ku +eps^2 u = kz}).
  Assume that in addition to already proved inequality (\ref{Zolot}) there
  exist $\tilde{\rho}, \sigma, \tilde{\sigma} \in (0,1)$, so that
\begin{equation} \label{optimality relations}
\lambda_{n+1} \geq \tilde{\rho} \lambda_n,
\qquad
\displaystyle \frac{|\pi_{n+1}|^2}{\lambda_{n+1}} \leq \sigma
\frac{|\pi_n|^2}{\lambda_n}, \qquad 
\frac{|\pi_{n}|^2}{\lambda^{2}_{n}} \leq \tilde{\sigma}\frac{|\pi_{n+1}|^2}{\lambda_{n+1}^{2}},
\qquad \forall n \ \text{large}
\end{equation}
Then $\|u\|\simeq\Ge\|u\|_{H^{2}}$, i.e. Conjecture~\ref{conjecture upper
  bound} is true.
\end{lemma}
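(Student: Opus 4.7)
Expanding $u$ in the orthonormal eigenbasis of $\CK$ via $u_n = \pi_n/(\lambda_n+\epsilon^2)$, and using Lemma~\ref{lem:Hplus} to identify $\|u\|_{H^2}^2$ with $\|u\|_+^2$, one has the explicit spectral representations
\begin{equation*}
\|u\|^2 \;=\; \sum_{n\geq 1}\frac{|\pi_n|^2}{(\lambda_n+\epsilon^2)^2},\qquad \|u\|_{H^2}^2 \;=\; \sum_{n\geq 1}\frac{|\pi_n|^2}{\lambda_n(\lambda_n+\epsilon^2)^2}.
\end{equation*}
The two series differ termwise by the factor $\epsilon^2/\lambda_n$, so I would localize them around the unique crossover index $N=N(\epsilon)$ defined by $\lambda_N\leq\epsilon^2<\lambda_{N-1}$, which exists for all sufficiently small $\epsilon$ since $\lambda_n\downarrow 0$ strictly. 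The lower bound $\lambda_{n+1}\geq\tilde\rho\lambda_n$ from \eqref{optimality relations} then pins $\lambda_N\in[\tilde\rho\epsilon^2,\epsilon^2]$, so $\lambda_N+\epsilon^2\simeq\epsilon^2$.

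The whole lemma would follow from the single claim that both $\|u\|^2$ and $\epsilon^2\|u\|_{H^2}^2$ are comparable to the common ``anchor'' $|\pi_N|^2/\epsilon^4$. The lower bound is immediate: keep only the $n=N$ summand and use $\lambda_N+\epsilon^2\leq 2\epsilon^2$ together with $\lambda_N\leq\epsilon^2$. For the matching upper bounds I would split each series at $N$. On the outer block $n>N$, use $(\lambda_n+\epsilon^2)^2\geq\epsilon^4$ and iterate the second relation of \eqref{optimality relations}, $a_{n+1}\leq\sigma a_n$ for $a_n:=|\pi_n|^2/\lambda_n$, together with the Zolotarev bound $\lambda_{n+1}\leq\rho_1\lambda_n$; this yields $|\pi_n|^2\leq(\sigma\rho_1)^{n-N}|\pi_N|^2$ and $a_n\leq\sigma^{n-N}a_N$, producing convergent geometric series of ratios $\sigma\rho_1<1$ and $\sigma<1$ whose sums, after invoking $\lambda_N\simeq\epsilon^2$, are each bounded by a constant multiple of $|\pi_N|^2/\epsilon^4$. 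On the inner block $n\leq N$, use $(\lambda_n+\epsilon^2)^2\geq\lambda_n^2$, rewrite the summands in terms of $b_n:=|\pi_n|^2/\lambda_n^2$, and iterate the third relation $b_n\leq\tilde\sigma b_{n+1}$ backward from $N$, combined with $\lambda_n\geq\rho_1^{n-N}\lambda_N$ (that is, $1/\lambda_n\leq\rho_1^{N-n}/\lambda_N$); this yields geometric series of ratios $\tilde\sigma<1$ and $\tilde\sigma\rho_1<1$, again majorized by constants times $|\pi_N|^2/\epsilon^4$ once $\lambda_N\simeq\epsilon^2$ is used.

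Combining the four pieces gives $\|u\|^2\simeq|\pi_N|^2/\epsilon^4\simeq\epsilon^2\|u\|_{H^2}^2$, which is exactly \eqref{optimality eqn}. The step I expect to be most delicate is the two-sided bookkeeping near the index $N$: the outer block relies on the forward geometric decay of $a_n$, the inner block on the backward geometric decay of $b_n$, and both must be rescaled to the same anchor via the two-sided estimate $\tilde\rho\epsilon^2\leq\lambda_N\leq\epsilon^2$, which in turn requires the lower decay hypothesis $\lambda_{n+1}\geq\tilde\rho\lambda_n$ to prevent $N$ from ``jumping.'' A minor point is that the hypotheses hold only for $n$ large; this is harmless, because the finitely many initial terms contribute a bounded constant to each sum, whereas the anchor $|\pi_N|^2/\epsilon^4\simeq b_N$ tends to infinity as $\epsilon\to 0$ (since $b_n$ grows by a factor at least $1/\tilde\sigma$ per step), so the initial block is of lower order.
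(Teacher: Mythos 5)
Your proof is correct and follows essentially the same route as the paper: split the spectral sums at the crossover index where $\lambda_n$ passes $\epsilon^2$, pin that index via $\lambda_{n+1}\geq\tilde\rho\lambda_n$, and use the two geometric‑decay hypotheses on $|\pi_n|^2/\lambda_n$ and $|\pi_n|^2/\lambda_n^2$ to sum the outer and inner blocks. Your reorganization around the single anchor $|\pi_N|^2/\epsilon^4$ is a clean way to run the bookkeeping (and in fact yields the slightly stronger statement $\|u\|^2\simeq|\pi_N|^2/\epsilon^4\simeq\epsilon^2\|u\|_{H^2}^2$, whereas the paper compares the two block decompositions directly), and your explicit remark disposing of the finitely many initial indices where \eqref{optimality relations} may fail is a detail the paper passes over silently. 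One small overreach: you invoke the Zolotarev bound $\lambda_{n+1}\leq\rho_1\lambda_n$, but plain monotonicity $\lambda_{n+1}\leq\lambda_n$ already suffices everywhere you use it.
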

\begin{proof}
 We have 
\[
\|u\|_{H^{2}}^{2}=\sum_{n=1}^\infty \dfrac{|\pi_n|^2}{\lambda_n (\lambda_n+\epsilon^2)^2},
\qquad
\|u\|^{2} = \sum_{n=1}^\infty \dfrac{|\pi_n|^2}{(\lambda_n + \epsilon^2)^2}. 
\]
Define the switchover index $J=J(\epsilon)$
\begin{equation}
\label{switchJ}
\begin{cases}
\lambda_n \geq \epsilon^2 & \forall \ 1\leq n \leq J(\epsilon)
\\
\lambda_n < \epsilon^2 & \forall \ n> J(\epsilon)
\end{cases}
\end{equation}
 then we see
\begin{equation} \label{B(eps) bound}
\|u\|_{H^{2}}^{2} \simeq \sum_{n=1}^{J} \frac{|\pi_n|^2}{\lambda_n^3} + \frac{1}{\epsilon^4} \sum_{n=J + 1}^\infty \frac{|\pi_n|^2}{\lambda_n},\qquad
 \|u\|^{2}\simeq \sum_{n=1}^{J} \frac{|\pi_n|^2}{\lambda_n^2} + \frac{1}{\epsilon^4} \sum_{n=J + 1}^\infty |\pi_n|^2.
\end{equation}
Indeed, for $n\le J$
\[
\nth{4}\dfrac{|\pi_n|^2}{\lambda_n^{3}}\le\dfrac{|\pi_n|^2}{\lambda_n (\lambda_n+\epsilon^2)^2}\le\dfrac{|\pi_n|^2}{\lambda^{3}_{n}}
\]
while for all $n>J$
\[
\nth{4}\dfrac{|\pi_n|^2}{\lambda_n\Ge^{4}}\le\dfrac{|\pi_n|^2}{\lambda_n
  (\lambda_n+\epsilon^2)^2}\le\dfrac{|\pi_n|^2}{\lambda_{n}\Ge^{4}}
\]

\noindent recall that the eigenvalues are labeled in decreasing order: $\lambda_1 \geq \lambda_2 \geq ...$. Now, the second inequality of \eqref{optimality relations} implies

\begin{equation} \label{sum from J+1 to infty}
\frac{|\pi_n|^2}{\lambda_n} \leq \sigma^{n-J-1} \frac{|\pi_{J+1}|^2}{\lambda_{J+1}}, \qquad  n \geq J+1
\quad \Longrightarrow \quad
\sum_{n=J + 1}^\infty \frac{|\pi_n|^2}{\lambda_n} \leq \frac{1}{1-\sigma} \frac{|\pi_{J+1}|^2}{\lambda_{J+1}}
\end{equation}
But then we can estimate
\begin{equation*}
\|u\|_{H^{2}}^{2} \lesssim \frac{1}{\lambda_{J+1}}  \left( \sum_{n=1}^{J} \frac{\lambda_{J+1}}{\lambda_n^3} |\pi_n|^2 + \frac{|\pi_{J+1}|^2}{\epsilon^4} \right) 
\lesssim
\frac{1}{\lambda_{J+1}}  \left( \sum_{n=1}^{J} \frac{|\pi_n|^2}{\lambda_n^2}  + \frac{|\pi_{J+1}|^2}{\epsilon^4} \right)
\lesssim
\frac{\|u\|^{2}}{\Ge^{2}},
\end{equation*}
where in the last inequality we used the first inequality of \eqref{optimality relations}.

In order to prove the reverse inequality we appeal to the third inequality of
\eqref{optimality relations}, which implies
\begin{equation} \label{sum from 1 to J}
\sum_{n=1}^J \frac{|\pi_n|^2}{\lambda_n^2} \lesssim \frac{|\pi_{J}|^2}{\lambda_{J}^2} \le \lambda_J \sum_{n=1}^J \frac{|\pi_n|^2}{\lambda_n^3} 
\end{equation}
\noindent But then we can estimate
\begin{equation*}
\epsilon^2 \|u\|_{H^{2}}^{2} \gtrsim \frac{\epsilon^2}{\lambda_J} \left(\sum_{n=1}^J \frac{|\pi_n|^2}{\lambda_n^2} + \frac{1}{\epsilon^4} \sum_{n=J + 1}^\infty |\pi_n|^2\right)\gtrsim\|u\|^{2},
\end{equation*}
where we also used the first inequality of \eqref{optimality relations}:
$\lambda_J \lesssim \lambda_{J+1} < \epsilon^2$ which concludes the proof of
the lemma.
\end{proof}

Let us now prove the second part of Theorem~\ref{connections THM}, that
requires strict exponential asymptotics \eqref{lambda_n and pi_n asymp}, which
implies (\ref{optimality relations}), and therefore (\ref{optimality eqn}). In
this case formulas (\ref{main bound}), (\ref{Pyth}) and (\ref{L2normu}) imply
\[
M_{\Ge,z}\simeq\Ge^{2}\|u\|_{H^{2}}\simeq\Ge\|u\|=
\Ge\sqrt{\sum_{n=1}^\infty \frac{|\pi_n|^2}{(\Ge^{2}+\lambda_{n})^2}}.
\]
Then the conclusion of the second part of Theorem~\ref{connections THM}
follows from the following lemma.
\begin{lemma} \label{LEM sum asymptotics}
Let $\{a_n, b_n\}_{n=1}^\infty$ be nonnegative numbers such that $a_n \simeq e^{-\alpha n}$ and $b_n \simeq e^{-\beta n}$ with $0< \beta < \alpha$, where the implicit constants don't depend on $n$. Let $\Gd > 0$ be a small parameter, then

\begin{equation} \label{sum asymptotics}
\sum_{n=1}^\infty \frac{b_n}{(a_n + \Gd)^2} \simeq \Gd^{\frac{\beta}{\alpha} - 2}
\end{equation}

\noindent where the implicit constants don't depend on $\Gd$.

\end{lemma}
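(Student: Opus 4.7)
The plan is to split the sum at the \emph{switchover index} $N = N(\Gd)$, defined as the largest integer with $a_N \geq \Gd$. Since $a_n \simeq e^{-\alpha n}$ uniformly in $n$, we have $e^{-\alpha N} \simeq \Gd$, equivalently $N = -(\ln \Gd)/\alpha + O(1)$, and consequently $e^{-\beta N} \simeq \Gd^{\beta/\alpha}$ and $e^{2\alpha N} \simeq \Gd^{-2}$. On the head $n \leq N$ the denominator satisfies $(a_n+\Gd)^2 \simeq a_n^2$, so each summand is $\simeq e^{(2\alpha-\beta)n}$; on the tail $n > N$ the denominator satisfies $(a_n+\Gd)^2 \simeq \Gd^2$, so each summand is $\simeq \Gd^{-2} e^{-\beta n}$.

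Each of the two pieces is then a geometric-type sum. The tail $\sum_{n>N} e^{-\beta n}$ is a convergent geometric series with ratio $e^{-\beta} \in (0,1)$, comparable to its first term $e^{-\beta(N+1)} \simeq \Gd^{\beta/\alpha}$, which after division by $\Gd^2$ contributes $\simeq \Gd^{\beta/\alpha - 2}$. For the head, the proof needs $2\alpha - \beta > 0$ so that $\sum_{n=1}^{N} e^{(2\alpha-\beta)n}$ is an increasing geometric series, comparable to its \emph{last} term $e^{(2\alpha-\beta)N} = e^{2\alpha N}e^{-\beta N} \simeq \Gd^{-2}\cdot \Gd^{\beta/\alpha} = \Gd^{\beta/\alpha - 2}$. (The stated assumption $\beta < \alpha$ is stronger than what is actually used.) Adding the two contributions yields the claim.

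To make this rigorous under the $\simeq$-hypotheses I would argue the upper and lower bounds separately. For the upper bound, use $(a_n+\Gd)^2 \geq \max(a_n^2,\Gd^2) \gtrsim \max(e^{-2\alpha n},\Gd^2)$ together with $b_n \lesssim e^{-\beta n}$, then bound each geometric sum by its dominant end term times a constant depending only on $\alpha,\beta$. For the lower bound, use $(a_n+\Gd)^2 \leq 2(a_n^2+\Gd^2) \lesssim \max(e^{-2\alpha n},\Gd^2)$ and $b_n \gtrsim e^{-\beta n}$, and retain just a single well-chosen term at each end (say $n=N$ for the head and $n=N+1$ for the tail) to obtain the matching lower bound $\gtrsim \Gd^{\beta/\alpha - 2}$. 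The only real obstacle is bookkeeping: tracking the implicit constants from the four $\simeq$-relations, and verifying that the discrete index $N$ agrees with $-(\ln\Gd)/\alpha$ up to an additive $O(1)$, so that all the auxiliary $\simeq$ comparisons hold with $\Gd$-independent constants. No analytic idea beyond the elementary fact that a geometric series with ratio bounded away from $1$ is comparable to its dominant end term is needed.
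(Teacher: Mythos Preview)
Your proposal is correct and follows essentially the same approach as the paper: introduce the switchover index where $a_n$ crosses $\Gd$, split the sum into head and tail, and use that each piece is a geometric-type series comparable to its dominant end term. Your observation that only $\beta < 2\alpha$ (rather than $\beta < \alpha$) is actually needed is also correct and worth noting.
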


\begin{proof}
  As in the proof of Lemma~\ref{lem:BDconj} we introduce the switchover index
  $J=J(\Gd) \in \NN$ defined by

\begin{equation*}
\begin{cases}
a_n \geq \Gd & \quad \forall \ 1\leq n \leq J
\\
a_n < \Gd & \quad \forall \ n> J
\end{cases}
\end{equation*}

\noindent Below all the implicit constants in relations involving $\simeq$ or $\lesssim$ will be independent on $\Gd$. It is clear that

\begin{equation*}
\sum_{n=1}^\infty \frac{b_n}{(a_n + \Gd)^{2}} \simeq \sum_{n \leq J} \frac{b_n}{a_n^{2}} + \frac{1}{\Gd^{2}} \sum_{n>J} b_n.
\end{equation*}

\noindent Note that

\begin{equation*}
\sum_{n>J} b_n \lesssim \sum_{n>J} e^{-\beta n} \lesssim e^{-\beta (J+1)}.
\end{equation*}

\noindent On the one hand, using our assumption on $b_n$ we find

\begin{equation} \label{second sum}
\sum_{n>J} b_n \simeq b_{J+1} \simeq b_J.
\end{equation}

\noindent On the other hand

\begin{equation*}
\sum_{n \leq J} \frac{b_n}{a_n^{2}} \lesssim \sum_{n \leq J} e^{(2\alpha - \beta) n} = e^{2\alpha - \beta} 
\frac{e^{(2\alpha - \beta) J} - 1}{e^{2\alpha - \beta}-1} \lesssim e^{(2\alpha - \beta) J} \simeq \frac{b_J}{a_J^{2}},
\end{equation*}

\noindent Thus we conclude

\begin{equation} \label{first sum}
\sum_{n \leq J} \frac{b_n}{a_n^{2}} \simeq \frac{b_J}{a_J^{2}}.
\end{equation}

\noindent Now $\Gd \simeq a_J$ and $a_J \simeq e^{-\alpha J}$, therefore $e^{-J} \simeq \Gd^{\frac{1}{\alpha}}$. Using these along with \eqref{second sum} and \eqref{first sum} we obtain   

\begin{equation*}
\sum_{n=1}^\infty \frac{b_n}{(a_n + \Gd)^{2}} \simeq \frac{b_J}{a_J^{2}} + \frac{b_J}{\Gd^{2}} \simeq \frac{b_J}{a_J^{2}} \simeq e^{(2\alpha- \beta) J} \simeq \Gd^{\frac{\beta}{\alpha} - 2}.
\end{equation*}

\end{proof}

\section{Maximizing the extrapolation error} \label{DMP Section}
\setcounter{equation}{0} 
\label{sec:proof1}

\noindent \textbf{Notation:} In this section it will be convenient to switch
notation and let $\|\cdot\|$ and $(\cdot, \cdot)$ be the norm and the inner product of $H^2$.

\vspace{.1in}

Our goal is to understand how large $|f(z)|$ can be, under the assumptions
$\|f\|_{H^2} \leq 1$ and $\|f\|^2_{L^{2}(\GG)} \leq \epsilon$. From the representation formula \eqref{H2rep} we find

\begin{equation*}
f(z) = \frac{1}{2\pi} (f,p), \qquad \qquad p(x) = \frac{i}{x-\overline{z}}
\end{equation*}

\noindent on the other hand

\begin{equation*}
\|f\|_{L^2(\Gamma)}^2 = (\CK f,f)
\end{equation*}

\noindent where 

\begin{equation*}
(\CK f)(s) = \int_{\RR} k(t, s) f(t) \D t, \qquad
k(t,s) = \frac{1}{4\pi^2} \int_{\Gamma} \frac{|\D \tau|}{(t-\tau)(s-\overline{\tau})},\qquad
s\in\bb{R},
\end{equation*}
and we see that $\CK$ is a bounded, positive definite, self-adjoint operator in $H^2$. 
We can interchange the order of integration in the definition of $\CK f$, use \eqref{H2rep}, and obtain an alternative
representation:
\begin{equation}
  \label{K falt}
  (\CK f)(s) = \frac{i}{2\pi}\int_{\Gamma} \frac{f(\tau)|\D
    \tau|}{s-\overline{\tau}},\quad s\in\bb{R},\quad f\in H^2(\bb{H}_{+}).
\end{equation}
From this representation it is obvious that $\CK f$
has an analytic extension from $\bb{R}$ to $\mathbb{C} \setminus \overline{\Gamma}$ and that
its restriction to $\bb{H}_{+}$ is of Hardy class $H^{2}$.

\noindent Thus we arrive at a convex maximization problem with two quadratic constraints. Since the constraints are invariant with respect to the
choice of the constant phase factor for the function $f$, instead of
maximizing $|f(z)|$ we consider the
equivalent problem of maximizing a real linear functional $\Re (f,
p)$:

\begin{equation} \label{maximization problem}
\begin{cases}
\tfrac{1}{2\pi} \Re (f, p) \rightarrow \max \\
(f, f) \leq 1 
\\
\displaystyle (\CK f,f) \leq \epsilon^2 
\end{cases}
\end{equation}

\noindent For every $f$, satisfying \eqref{maximization problem}(b) and \eqref{maximization problem}(c) 
and for every
nonnegative numbers $\mu$ and $\nu$ ($\mu^{2}+\nu^{2}\not=0$) we have the
inequality
\[
((\mu + \nu \CK)f, f)\le\mu+\nu \epsilon^{2}
\]
obtained by multiplying \eqref{maximization problem}(b) by $\mu$ and \eqref{maximization problem}(c) by $\nu$
and adding. Also, for any uniformly positive definite self-adjoint operator
$M$ on $H^{2}$ we have
\[
\Re(u,v)-\frac{1}{2}(M^{-1}v,v)\le\frac{1}{2} (Mu,u)
\]
valid for all functions $u,v \in H^2$ (expand
$(M(M^{-1}v-u),(M^{-1}v-u))\geq 0$). The uniform positivity of $M$ ensures
that $M^{-1}$ is defined on all of $H^{2}$. This is an example of convex
duality (cf. \cite{ekte76}) applied to the convex function
$F(u)=(Mu,u)/2$. Then we also have for $\mu>0$

\begin{equation}\label{cxdual}
\Re(f, p)-\frac{1}{2} \left((\mu+\nu \CK)^{-1} p,p \right)\le
\frac{1}{2} \left( (\mu+\nu \CK)f,f \right),
\end{equation}
so that
\begin{equation} \label{maxub}
\Re(f, p) \le \frac{1}{2} \left((\mu+\nu \CK)^{-1} p,p \right)
+\frac{1}{2} \left( \mu+\nu \epsilon^{2}  \right)
\end{equation}

\noindent which is valid for every $f$, satisfying \eqref{maximization
  problem}(b) and \eqref{maximization problem}(c) and all $\mu > 0$, $\nu\ge
0$. In order for the bound to be optimal we must have equality in
(\ref{cxdual}), which holds if and only if
\[
p = (\mu + \nu \CK) f,
\]
giving the formula for optimal vector $f$:

\begin{equation} \label{maxxi}
  f=(\mu + \nu \CK)^{-1} p.
\end{equation}
The goal is to choose the Lagrange multipliers $\mu$ and $\nu$ so that the
constraints in (\ref{maximization problem}) are satisfied by $f$, given by
(\ref{maxxi}). Let us first consider special cases.

\noindent $\bullet$ if $\nu = 0$, then $f = \frac{p}{\|p\|} $, so we see that $f$ does not depend on the small parameter $\epsilon$, which leads to a contradiction, because the second constraint $(\CK f,f) \leq \epsilon^2$ is violated if $\epsilon$ is small enough. 

\vspace{.1in}

\noindent $\bullet$ If $\mu= 0$, the operator
$(\mu + \nu \CK)^{-1}$ is not defined on all of $H^{2}$. It is however defined
on a dense subspace of $H^{2}$. Even so, the choice $\mu=0$ cannot be optimal,
since then the optimal function $f$ would satisfy $\CK f = \frac{1}{\nu}p$. This
equation has no solutions in $H^2$, since $p$ has a pole at
$\overline{z}\not\in\overline{\Gamma}$, while $\CK f$ has an analytic extension to $\mathbb{C}
\setminus \overline{\Gamma}$.

\vspace*{.1in}

Thus we are looking for $\mu>0,\  \nu> 0$, so that equalities in
\eqref{maximization problem} hold. (These are the complementary slackness relations in Karush-Kuhn-Tucker conditions.), i.e.
\begin{equation}\label{cxdeq}
\begin{cases}
\left( (\mu + \nu \CK)^{-1} p, (\mu + \nu \CK)^{-1} p \right) =1, \\
\left( \CK (\mu + \nu \CK)^{-1} p, (\mu + \nu \CK)^{-1} p \right)= \epsilon^2.
\end{cases}
\end{equation}

\noindent Let $\eta = \frac{\mu}{\nu}$, we can solve either the first or the
second equation in \eqref{cxdeq} for $\nu$
\begin{equation}
  \label{nu1}
  \nu^{2} = \|(\CK+\eta)^{-1} p\|^{2},
\end{equation}
or
\begin{equation}
  \label{nu2}
  \nu^{2}=\epsilon^{-2}\left( \CK (\eta + \CK)^{-1} p, (\eta + \CK)^{-1} p \right).
\end{equation}
The two analysis paths stemming from using one or the other representation
for $\nu$ lead to two versions of the upper bound on $|f(z)|$, 
optimality of neither we can prove. However, the minimum of the two upper
bounds is still an upper bound and its optimality is then apparent. At first
glance both expressions for $\nu$ should be equivalent and not lead
to different bounds. Indeed, their
equivalence can be stated as an equation
\begin{equation}\label{Phi(eta) def and equation}
\Phi(\eta):= \frac{ \left( \CK (\CK + \eta)^{-1} p, (\CK + \eta)^{-1} p \right) }
{ \|(\CK+\eta)^{-1} p\|^2 } = \epsilon^{2}
\end{equation}
for $\eta$. We will prove that this equation has a unique solution
$\eta_{*}=\eta_{*}(\Ge)$, but we will be unable to prove that
$\eta_{*}(\Ge)\simeq\Ge^{2}$, as $\Ge\to 0$, which would follow from the
purported strict exponential decay of $\Gl_{n}$ and $\pi_{n}$. Thus, we take
$\eta_{*}(\Ge)=\Ge^{2}$ without justification, observing that \emph{any}
choice of $\eta$ gives a valid upper bound. But then the two expressions
(\ref{nu1}) and (\ref{nu2}) for $\nu$ give non-identical upper bounds, whose
combination achieves our goal.

We observe that
\[
\lim_{\eta\to\infty}\Phi(\eta)=\lim_{\eta\to\infty}\frac{ \left( \CK (\eta^{-1}\CK + 1)^{-1} p, (\eta^{-1}\CK + 1)^{-1} p \right) }
{ \|(\eta^{-1}\CK+1)^{-1} p\|^2 }=
\frac{(\CK p, p)}{\|p\|^{2}} < +\infty.
\]
Using Lemma~\ref{lem:Hplus} we have
\begin{equation}
  \label{Phinumer}
  \left( \CK (\CK + \eta)^{-1} p, (\CK + \eta)^{-1} p\right)=
\sum_{n=1}^{\infty}\frac{|\pi_{n}|^{2}}{(\Gl_{n}+\eta)^{2}},
\end{equation}
and
\begin{equation}
  \label{Phidenom}
  \|(\CK + \eta)^{-1} p\|^{2}=\sum_{n=1}^{\infty}\frac{|\pi_{n}|^{2}}{\Gl_{n}(\Gl_{n}+\eta)^{2}}.
\end{equation}
From Lemma Fatou and (\ref{diverge}) we know that
\[
\lim_{\eta\to 0}\|(\CK + \eta)^{-1} p\|^{2}=+\infty.
\]
Let $\Gd>0$ be arbitrary. Let $K$ be such that $\Gl_{n}<\Gd$ for all $n>
K$. Then
\[
\Phi(\eta)=\Phi_{K}(\eta)+\Psi_{K}(\eta),
\]
where
\[
\Phi_{K}(\eta)=\frac{\sum_{n=1}^{K}\frac{|\pi_{n}|^{2}}{(\Gl_{n}+\eta)^{2}}}
{\|(\CK + \eta)^{-1} p\|^{2}},\qquad
\Psi_{K}(\eta)=\frac{\sum_{n=K+1}^{\infty}\frac{|\pi_{n}|^{2}}{(\Gl_{n}+\eta)^{2}}}
{\|(\CK + \eta)^{-1} p\|^{2}}
\]
Then
\[
\lims_{\eta\to 0}\Phi_{K}(\eta)=0.
\]
We also have
\[
\Psi_{K}(\eta)\le\frac{\sum_{n=K+1}^{\infty}\frac{|\pi_{n}|^{2}}{(\Gl_{n}+\eta)^{2}}}
{\sum_{n=K+1}^{\infty}\frac{|\pi_{n}|^{2}}{\Gl_{n}(\Gl_{n}+\eta)^{2}}}\le
\Gl_{K+1}<\Gd.
\]
Thus,
\[
\lims_{\eta\to 0}\Phi(\eta)\le\lims_{\eta\to 0}\Phi_{K}(\eta)+
\lims_{\eta\to 0}\Psi_{K}(\eta)\le\Gd.
\]
Since $\Gd>0$ was arbitrary we conclude that
$\Phi(0^{+})=0$. Thus, for every $\Ge<\sqrt{(\CK p,p)}/\|p\|$ equation
(\ref{Phi(eta) def and equation}) has at least one solution $\eta>0$. We can prove that
this solution is unique by showing that $\Phi(\eta)$ is a monotone increasing
function. To prove this we only need to write
the numerator $N(\eta)$ of $\Phi'(\eta)$, obtained by the quotient rule. Using
formula
\[
\frac{d}{d\eta}(\CK+\eta)^{-1}=-(\CK+\eta)^{-2}
\]
and denoting $u=(\CK+\eta)^{-1}p$ we obtain
\[
N(\eta)=2((\CK+\eta)^{-1}u,u)(\CK u,u)-2(\CK(\CK+\eta)^{-1}u,u)\|u\|^{2}.
\]
Using formula $\CK (\CK+\eta)^{-1} = 1 - \eta (\CK+\eta)^{-1}$ we also have
\[
N(\eta)=2((\CK+\eta)^{-1}u,u)((\CK+\eta)u,u)-2(u,u)^{2}.
\]
Since operator $\CK+\eta$ is positive definite we can use the inequality
\[
(Ax,y)^{2}\le(Ax,x)(Ay,y)
\]
for $A=\CK+\eta$, $x=(\CK+\eta)^{-1}u$ and $y=u$, showing that $N(\eta)\ge 0$.
The equality occurs \IFF\ $x=\Gl y$. In our case this would correspond to $p$
being an eigenfunction of $\CK$, which is never true, since $p$ has a pole at
$\bar{z}$ and all functions in the range of $\CK$ have an analytic extension to 
$\mathbb{C}\setminus \overline{\Gamma}$. Thus, $N(\eta)>0$ and
(\ref{Phi(eta) def and equation}) has a unique
 solution $\eta_{*}>0$. Finding the asymptotics of $\eta_{*}(\Ge)$, as $\Ge\to
 0$ lies beyond capabilities of classical asymptotic methods because
 $\Phi(\eta)$ has an essential singularity at $\eta=0$. Indeed, it is not hard
 to show\footnote{Specifically $\eta=-\Gl_{m}$ is a pole of order 4 of
   $\|u\|^{4}$, while it is a pole of order 3 of $N(\eta)$.} that
 $\Phi'(-\Gl_{m})=0$ for all $m\ge 1$. Thus $\eta=0$ is neither a pole nor a
 removable singularity of $\Phi(\eta)$.

We can avoid the difficulty by observing that since the bound \eqref{maxub} is valid
for \emph{any} choice of $\mu$ and $\nu$, we can choose $\eta=\mu/\nu$ based on a
non-rigorous analysis of what $\eta_{*}$ should be, and then choose $\nu$ according to
(\ref{nu1}) or (\ref{nu2}), while still obtaining an
upper bound.

In accordance with (\ref{lambda_n and pi_n asymp}) we postulate that
\begin{equation*}
|\pi_n|^2 =  e^{- n \beta}, \qquad \Gl_n:= e^{-n \alpha}
\end{equation*}

\noindent for some $0< \alpha < \beta < 2 \alpha$, hence  equations \eqref{Phinumer} and \eqref{Phidenom} give

\begin{equation} \label{mPhi(eta) the model equation}
\Phi(\eta) = \frac{\displaystyle \sum_{n=1}^\infty \frac{e^{-n\beta}}{(e^{-n \alpha} + \eta )^2}}{\displaystyle \sum_{n=1}^\infty \frac{e^{n(\Ga- \beta)}}{(e^{-n \alpha} + \eta )^2}} = \epsilon^2.
\end{equation}
When $e^{-n \alpha} > \eta$ we will neglect $\eta$, while when
$e^{-n \alpha} < \eta$ we will neglect $e^{-n \alpha}$. Let $J = J(\eta)$ be
the switch-over index, for which $e^{-J(\eta)\Ga}\approx\eta$. Then 
\begin{equation*}
\begin{split}
\sum_{n=1}^\infty \frac{e^{n(\alpha-\beta)}}{(e^{-n \alpha} + \eta )^2} \approx \sum_{j=1}^J e^{n(3\alpha - \beta)} + \frac{1}{(\eta)^2} \sum_{j=J+1}^\infty e^{n(\alpha - \beta)}
\approx e^{J(3\alpha - \beta)} + \frac{e^{J(\alpha-\beta)}}{\eta^2}
\end{split}
\end{equation*}
Similarly,
\begin{equation*}
\sum_{n=1}^\infty \frac{e^{-n\beta}}{(e^{-n \alpha} + \eta )^2} \approx e^{J (2\alpha-\beta)} + \frac{e^{-J\beta}}{\eta^2}
\end{equation*}

\noindent substituting these approximations in \eqref{mPhi(eta) the model
  equation} and simplifying we obtain $e^{-J \alpha} \approx \epsilon^{2}$.
In other words 
\begin{equation} \label{eta = m epsilon^2}
\eta_{*}\approx e^{-J\Ga}\approx\epsilon^{2}.
\end{equation}
With this motivation let us choose $\eta = \epsilon^2$. With this and formulas
(\ref{nu1}) and (\ref{nu2}) for $\nu$ we obtain the two forms of the upper
bound (\ref{maxub}) conveniently written in terms of $u=(\CK+\Ge^{2})^{-1}p$:
\begin{equation}
  \label{UB1}
\Re(f,p)\le \frac{(u,p)}{2\|u\|}+\Ge^{2}\|u\|=\frac{\pi u(z)}{\|u\|}+\Ge^{2}\|u\|,
\end{equation}
where we have used (\ref{uplus}), and similarly
\begin{equation}
  \label{UB2}
\Re(f,p)\le \Ge\frac{\pi u(z)}{\|u\|_{L^{2}(\GG)}}+\Ge\|u\|_{L^{2}(\GG)},
\end{equation}
By (\ref{Pyth})
\[
\Ge^{2}\|u\|\le\frac{2\pi u(z)}{\|u\|},\qquad
\|u\|_{L^{2}(\GG)}\le\frac{2\pi u(z)}{\|u\|_{L^{2}(\GG)}}.
\]
Therefore, we have both
\[
|f(z)|=\nth{2\pi}\Re(f,p)\le\frac{3}{2}\frac{u(z)}{\|u\|},\qquad
|f(z)|\le\frac{3\Ge}{2}\frac{u(z)}{\|u\|_{L^{2}(\GG)}}.
\]
Inequality (\ref{main bound}) is now proved. 

We remark that equation (\ref{Phi(eta) def and equation}) for the optimal
choice $\eta_{*}(\Ge)$ can be written as
$\|u\|_{L^{2}(\GG)}=\Ge\|u\|_{H^{2}}$, in which case solution of (\ref{Ku
  +eps^2 u = kz}) with $\eta_{*}(\Ge)$ in place of $\Ge$ would satisfy
\[
\frac{\Ge u_{\Ge,z}(z)}{\|u\|_{L^{2}(\GG)}}=\frac{u_{\Ge,z}(z)}{\|u\|_{H^{2}}}=M_{\Ge,z}.
\]
Moreover $M_{\Ge,z}$ would be an exact upper bound for $|f(z)|$ achieved by
both $\Ge u_{\Ge,z}(\Gz)/\|u\|_{L^{2}(\GG)}$ and
$u_{\Ge,z}(\Gz)/\|u\|_{H^{2}}$. In the absence of exact asymptotics of
$\eta_{*}(\Ge)$ we have obtained only a marginally weaker bound, differing
from the optimal by at most a small constant multiplicative factor.

\section{Proof of Theorem~\ref{main boundary THM}} \label{extrapolation from boundary SECT}
\setcounter{equation}{0}

\subsection{The integral equation}

Let us first establish an analogous result to Theorem~\ref{upper bound THM}, i.e. below we formulate the upper bound in the case $\Gamma = [-1,1]$ via the solution to an integral equation.

\begin{theorem}\label{upper bound boundary THM}
Let $z \in \HH_+$ and $\epsilon > 0$. Assume $f \in H^2$ is such that $\|f\|_{H^2} \leq 1$ and $\|f\|_{L^2(-1,1)} \leq \epsilon$, then

\begin{equation} \label{main bound boundary}
|f(z)| \leq \frac{3}{2} \epsilon \frac{u_{\epsilon, z}(z)}{\|u_{\epsilon, z}\|_{L^2(-1,1)}}
\end{equation}

\noindent where $u_{\epsilon, z}$ solves the integral equation

\begin{equation} \label{Ku +(1+eps^2) u = p}
\hf(K u + u)+\epsilon^2 u = p_z, \qquad \text{on} \ (-1,1)
\end{equation}

\noindent with

\begin{equation} \label{K truncated HT}
K u (x) = \frac{i}{\pi} \int_{-1}^1 \frac{ u(y)}{x - y} \D y , \qquad \qquad p_z(\xi) = \frac{i}{\xi- \overline{z}},
\end{equation}
where the integral is understood in the principal value sense.
\end{theorem}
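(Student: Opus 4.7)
The idea is to obtain Theorem~\ref{upper bound boundary THM} as a limiting case of Theorem~\ref{upper bound THM}, applied to the family of interior curves $\Gamma_h = [-1,1] + ih$ as $h \downarrow 0^+$. Fix $f \in H^2$ with $\|f\|_{H^2}\leq 1$ and $\|f\|_{L^2(-1,1)}\leq \epsilon$, and set $\tilde\epsilon_h := \|f\|_{L^2(\Gamma_h)}$. Since the shifted values $f(\cdot+ih)$ of an $H^2$ function converge to the boundary trace in $L^2(\RR)$ as $h \downarrow 0$, we have $\tilde\epsilon_h \to \|f\|_{L^2(-1,1)} \leq \epsilon$. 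Applying Theorem~\ref{upper bound THM} to $\Gamma_h$ with tolerance $\tilde\epsilon_h$ produces $u_h := u_{\tilde\epsilon_h, z}$ (relative to $\Gamma_h$) solving $\CK_h u_h + \tilde\epsilon_h^2\, u_h = p_z$ on $\Gamma_h$, together with
\[
|f(z)| \leq \tfrac{3}{2}\tilde\epsilon_h \frac{u_h(z)}{\|u_h\|_{L^2(\Gamma_h)}}.
\]

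\textbf{Identifying the limit equation.} Parametrize $\Gamma_h$ by $y+ih$, $y\in[-1,1]$, and write $\tilde u_h(y) := u_h(y+ih)$. For $x\in(-1,1)$,
\[
(\CK_h u_h)(x+ih) \;=\; \frac{i}{2\pi}\int_{-1}^{1}\frac{\tilde u_h(y)}{x-y+2ih}\,\D y.
\]
The Sokhotski--Plemelj formula gives $\tfrac{1}{x-y+2ih} \to \mathrm{p.v.}\tfrac{1}{x-y} - i\pi\,\delta(x-y)$ as $h \downarrow 0^+$, so $(\CK_h u_h)(x+ih) \to \tfrac12(u(x) + (Ku)(x))$ whenever $\tilde u_h \to u$ in $L^2(-1,1)$. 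The integral equation for $u_h$ therefore formally passes, as $h \downarrow 0^+$, into
\[
\tfrac12(u + Ku) + \epsilon^2 u \;=\; p_z \quad \text{on } (-1,1),
\]
with $K$ the principal-value operator of \eqref{K truncated HT} and $p_z(x) = i/(x-\bar z)$. Pointwise convergence $u_h(z) \to u_{\epsilon,z}(z)$ at the interior evaluation point $z\in\HH_+$ follows as in Remark~\ref{remark}: both sides of the equation define analytic extensions away from the curve, and $u_h(z)$ is expressed via $\epsilon^{-2}(p_z(z) - (\CK_h u_h)(z))$ as an integral of $L^2$-convergent data against a kernel uniformly bounded in a neighborhood of $z$.

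\textbf{Main obstacle.} The principal difficulty is that the limit operator $\tfrac12(I+K)$ on $L^2(-1,1)$ is bounded but non-compact---a Cauchy-type singular integral operator---so the Fredholm-theoretic arguments underlying Theorem~\ref{upper bound THM} do not transplant directly. Existence and uniqueness of $u_{\epsilon,z}\in L^2(-1,1)$ will be established by the explicit inversion of the limit equation carried out in the subsequent subsections, which also yields the closed-form maximizer \eqref{maximizer boundary} and the exponent $\gamma(z)$. Uniform $L^2(\Gamma_h)$ control on $u_h$ sufficient to pass to the limit is furnished by the interior identity \eqref{Pyth}, namely $2\pi u_h(z) = \|u_h\|_{L^2(\Gamma_h)}^2 + \tilde\epsilon_h^2\,\|u_h\|_{H^2}^2$, together with a bound on $u_h(z)$ derived from the Cauchy representation of $\CK_h u_h$. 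With these ingredients, a weak-compactness extraction produces a subsequential limit satisfying the limit equation, uniqueness upgrades this to convergence of the full family, and taking $h \downarrow 0^+$ in the interior bound delivers \eqref{main bound boundary}.
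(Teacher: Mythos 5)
Your overall strategy --- representing $\Gamma=[-1,1]$ as the limit of $\Gamma_h = [-1,1]+ih$ and pushing Theorem~\ref{upper bound THM} to the boundary --- is exactly the paper's strategy, and your identification of the limit operator $\tfrac12(K+1)$ via Sokhotski--Plemelj is correct. However, there are three substantive problems with the way you propose to execute it.

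First, by applying the interior theorem with the $h$-dependent tolerance $\tilde\epsilon_h=\|f\|_{L^2(\Gamma_h)}$, the integral equation you obtain in the limit has the parameter $\tilde\epsilon:=\|f\|_{L^2(-1,1)}$ in place of $\epsilon$, and hence you produce the bound $\frac{3}{2}\tilde\epsilon\, u_{\tilde\epsilon,z}(z)/\|u_{\tilde\epsilon,z}\|_{L^2(-1,1)}$. This is \emph{not} the statement of the theorem, which involves $u_{\epsilon,z}$ defined with the prescribed $\epsilon$; since $u_{\tilde\epsilon,z}$ and $u_{\epsilon,z}$ solve different equations, there is no a priori monotonicity allowing you to replace one by the other. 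The paper avoids this by first reducing to the case $\|f\|_{L^2(-1,1)}<\epsilon$ (via $f^n=(1-1/n)f$), which gives $\|f\|_{L^2(\Gamma_h)}\le\epsilon$ for all small $h$, and then applying the interior theorem with the \emph{fixed} tolerance $\epsilon$.

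Second, the non-compactness of $\tfrac12(I+K)$ is not actually the obstacle you make it out to be: existence and uniqueness of $u_{\epsilon,z}$ only require that $\tfrac12(K+1)+\epsilon^2$ be boundedly invertible on $L^2(-1,1)$, which follows at once from its being a bounded positive operator (positivity being inherited from the $\CK_h$, each positive) plus the strictly positive shift $\epsilon^2$; one does not need the explicit inversion for this.

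Third, and most seriously, the weak-compactness extraction you sketch will not close: passing to the limit in the principal-value operator $K$ applied to a merely \emph{weakly} convergent sequence $\tilde u_h$ is not legitimate (singular integral operators are not weak-to-weak continuous against the kind of test you need, and a uniform $L^2$ bound on $\tilde u_h$ is itself not established --- your proposed use of \eqref{Pyth} is circular, since $u_h(z)$, $\|u_h\|_{L^2(\Gamma_h)}$ and $\|u_h\|_{H^2}$ all scale together). The paper instead proves \emph{strong} convergence $\CK_h g\to \tfrac12(K+1)g$ in $L^2(-1,1)$ for every fixed $g$ (via a Fourier-side argument and Vitali's theorem, Lemma~\ref{lem:Kh20}), observes the uniform operator bound $\|\CK_h\|\le 1$, and then uses the resolvent identity to upgrade to $(\CK_h+\epsilon^2)^{-1}p_h\to(\tfrac12(K+1)+\epsilon^2)^{-1}p$ strongly. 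This direct operator-convergence route is both rigorous and considerably simpler than a compactness/uniqueness argument; you would do well to adopt it.
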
  
\begin{proof}
It is enough to prove the inequality \eqref{main bound boundary} for $\|f\|_{H^2} \leq 1$ and $\|f\|_{L^2(-1,1)} < \epsilon$, because when $\|f\|_{L^2(-1,1)} = \epsilon$ we can consider the sequence $f^n:=(1-\frac{1}{n}) f$ and take limits in the inequality for $f^n$ as $n \to \infty$.

Since $f(\cdot+ih) \to f$ as $h \downarrow 0$ in $L^2(-1,1)$ (a well-known property of $H^2$ functions, see \cite{koos98}), the assumption $\|f\|_{L^2(-1,1)} < \epsilon$ implies that $\|f(\cdot + ih)\|_{L^2(-1,1)} \leq \epsilon$ for $h$ small enough. In other words $\|f\|_{L^2(\Gamma_h)} \leq \epsilon$, where $\Gamma_h = [-1,1] + ih$, so we can apply Theorem~\ref{upper bound THM} and conclude

\begin{equation*}
|f(z)| \leq \frac{3}{2} \epsilon \frac{u_h(z)}{\|u_h\|_{L^2(\Gamma_h)}}, \qquad \qquad \forall h \ \text{small enough}
\end{equation*}

\noindent where $u_h$ solves the integral equation

\begin{equation*}
\CK u(\zeta) +\epsilon^2 u(\zeta) = \frac{i}{\zeta- \overline{z}}, \qquad \qquad \zeta \in \Gamma_h
\end{equation*}

\noindent Let us set $v(x)=u(x+ih)$, then the above integral equation can be rewritten as

\begin{equation} \label{integral eqn for v}
\CK_h v(x) + \epsilon^2 v(x) = p_h(x), \qquad \qquad x \in [-1,1]
\end{equation}

\noindent with

\begin{equation} \label{K_h}
\CK_h v (x) = \frac{1}{2\pi} \int_{-1}^1 \frac{i v(y) \D y}{x - y + 2ih},
\qquad \qquad
p_h(x) = \frac{i}{x+ih- \overline{z}}
\end{equation}

\noindent again $\CK_h$ is a positive operator on $L^2(-1,1)$, $\CK_hv$ has analytic extension to the upper half-plane hence the solution $v$ of \eqref{integral eqn for v} is also analytic in $\HH_+$. Let us denote this solution by $v_h$ to indicate its dependence on the small parameter $h$, namely $v_h = \left( \CK_h + \epsilon^2 \right)^{-1} p_h$. Then the upper bound on $f$ becomes

\begin{equation} \label{upper bound on f with v_h}
|f(z)| \leq \frac{3}{2} \epsilon \frac{v_h(z-ih)}{\|v_h\|_{L^2(-1,1)}},\qquad \forall h \ \text{small enough}
\end{equation}

\noindent Our goal is to take limits in this upper bound as $h \downarrow 0$. 
\begin{lemma}
  \label{lem:Kh20}
Let $\CK_h$ and $K$ be defined by \eqref{K_h} and \eqref{K truncated HT}, respectively. Then any $g \in L^2(-1,1)$
\begin{equation}\label{K_h to K+1}
\CK_h g \to \tfrac{1}{2} (K+1)g, \qquad \qquad \text{as} \ h \downarrow 0, \ \text{in} \ L^2(-1,1).
\end{equation}
\end{lemma}
\begin{proof}~\\
  \noindent $\bullet$ $\{\CK_h\}_{h>0}$ is uniformly bounded in the operator norm on $L^2(-1,1)$. To prove this we observe that $\CK_h g = k * \chi_1 g $, where $\chi_1:=\chi_{(-1,1)}$ and

\begin{equation*}
k(t) = \frac{i}{2\pi(t+2ih)}
\end{equation*}

\noindent with the definition $\widehat{f}(\zeta) = \int_{\RR} f(x) e^{-i\zeta
  x} \D x$ we can compute $\widehat{k}(\zeta) = e^{-2h \zeta}
\chi_{>0}(\zeta)$, where $\chi_{>0}(\zeta)=\chi_{(0,+\infty)}(\zeta)$. In
particular $|\widehat{k}| \leq 1$, but then

\begin{equation*}
\begin{split}
\|\CK_h g\|_{L^2(-1,1)} &\leq \|\CK_h g\|_{L^2(\RR)} = \tfrac{1}{\sqrt{2\pi}} \|\widehat{k} \cdot \widehat{\chi_1 g}\|_{L^2(\RR)}
\leq \tfrac{1}{\sqrt{2\pi}} \|\widehat{\chi_1 g}\|_{L^2(\RR)} =
\\
&=  \|\chi_1 g\|_{L^2(\RR)}  = \|g\|_{L^2(-1,1)}
\end{split}
\end{equation*}

\noindent which immediately implies $\|\CK_h\| \leq 1$  for any $h>0$.

\vspace{.1in}

\noindent $\bullet$ By uniform boundedness of $\|\CK_{h}\|$, it is enough to
show convergence $\CK_h g \to \tfrac{1}{2} (K+1)g$ in $L^2(-1,1)$ for a dense
set of functions $g$. We will now show convergence for all $g \in
C_0^\infty(-1,1)$. Since by Sokhotski-Plemelj formula this convergence holds
a.e. in $(-1,1)$, to achieve the desired conclusion it is enough to show that
the family of functions $|\CK_h g|^2$ is equiintegrable in $(-1,1)$. Vitali
convergence theorem \cite[p. 133, exercise 10(b)]{rudin87} then implies
convergence of $\CK_h g$ in $L^2(-1,1)$. We recall the definition of
equiintegrability:
\begin{equation} \label{equi}
\sup_{|A| \leq \delta} \sup_{h>0} \int_{A} |\CK_h g(x)|^2 \D x \to 0, \qquad \qquad \text{as} \ \delta \to 0,
\end{equation}
where the first supremum is taken over measurable subsets $A \subset (-1,1)$.
\noindent We compute

\begin{equation*}
\int_{A} |\CK_h g(x)|^2 \D x = \|\chi_A \CK_h g\|_{L^2(\RR)}^2 = \|\widehat{\chi_A} * \widehat{\CK_h g}\|_{L^2(\RR)}^2 
\leq \|\widehat{\chi_A}\|_{L^2(\RR)}^2 \|\widehat{\CK_h g}\|_{L^1(\RR)}^2 
\end{equation*}

\noindent where we have used Young's inequality. Now \eqref{equi} follows from uniform boundedness of $\|\widehat{\CK_h g}\|_{L^1(\RR)}$. We compute

\begin{equation*}
\widehat{\CK_h g}(\xi) = e^{-2h\xi} \chi_{>0}(\xi) \widehat{\chi_1 g}(\xi)
\end{equation*}

\noindent hence

\begin{equation*}
\|\widehat{\CK_h g}\|_{L^1(\RR)} \leq \|\widehat{\chi_1 g}\|_{L^1(\RR)} = \|\widehat{g}\|_{L^1(\RR)} < \infty
\end{equation*}

\noindent since for $g \in C_0^\infty(-1,1)$ we have $\widehat{\chi_1 g} = \widehat{g} \in L^1(\RR)$. Thus,

\begin{equation*}
\int_{A} |\CK_h g(x)|^2 \D x \leq  \|\widehat{\chi_A}\|_{L^2(\RR)}^2 \|\widehat{g}\|_{L^1(\RR)}^2 = |A| \|\widehat{g}\|_{L^1(\RR)}^2 \to 0, \qquad \qquad \text{as} \ \delta \to 0
\end{equation*}
\end{proof}

Since $\CK_h$ is a positive operator for any $h$, we see that so is $K+1$ and hence the inverse of $\tfrac{1}{2} (K+1) + \epsilon^2$ is well-defined on $L^2(-1,1)$. We now see that, as $h \downarrow 0$

\begin{equation}
\label{Khconv}
v_h = \left( \CK_h + \epsilon^2 \right)^{-1} p_h \longrightarrow \left( \tfrac{1}{2} (K+1) + \epsilon^2 \right)^{-1} p =:w, \qquad \qquad \text{in} \ L^2(-1,1)
\end{equation}

\noindent where $p(x) = \frac{i}{x-\overline{z}}$. Using the resolvent identity
\[
\left( \CK_h + \epsilon^2 \right)^{-1}-\left( \CK_0 + \epsilon^2 \right)^{-1}=
\left( \CK_h + \epsilon^2 \right)^{-1}(\CK_{0}-\CK_{h})\left( \CK_0 + \epsilon^2
\right)^{-1},
\]
where $\CK_{0}=\frac{1}{2}(K+1)$, we conclude that
\[
\left( \CK_h + \epsilon^2 \right)^{-1}g\to\left( \CK_0 + \epsilon^2 \right)^{-1}g
\]
for any $g\in L^{2}(-1,1)$, since all operators above are uniformly
bounded as $h\to 0$. Relation (\ref{Khconv}) then easily follows.

We now observe that because of the convergence \eqref{K_h to K+1} $w \in L^2(-1,1)$ represents boundary values of an analytic function in the upper half-plane (in fact an $H^2$ function), hence we can extend $w$ to $\HH_+$, more specifically

\begin{equation*}
\epsilon^2 w(\zeta):= p(\zeta) - \frac{i}{2\pi} \int_{-1}^1 \frac{w(y)}{\zeta-y} \D y, \qquad \qquad \zeta \in \HH_+ 
\end{equation*}

\noindent defines the extension. But then, from the integral equation for $v_h$ we see that

\begin{equation*}
\epsilon^2 v_h(z-ih) = \frac{i}{z-\overline{z}} - \frac{i}{2\pi} \int_{-1}^1 \frac{v_h(y)}{z-y+ih} \D y \longrightarrow \epsilon^2 w(z)
\end{equation*}

\noindent and thus we conclude

\begin{equation*}
|f(z)| \leq \frac{3}{2} \epsilon \frac{w(z)}{\|w\|_{L^2(-1,1)}}
\end{equation*}

\noindent It remains to relabel $w$ by $u_{\epsilon, z}$ and conclude the proof.  
\end{proof}

\subsection{Solution of the integral equation}

The goal of this section is to find the function $u$ appearing in the upper bound \eqref{main bound boundary}. Recall that $u$ solves the integral equation

\begin{equation*}
K u +\lambda u = 2p, \qquad \text{on} \ (-1,1)
\end{equation*}

\noindent where $\lambda = 1+2\epsilon^2$, $K$ is the truncated Hilbert transform given by \eqref{K truncated HT}, and for fixed $z \in \HH_+$

\begin{equation*}
p(x) = \frac{i}{x - \overline{z}}
\end{equation*}

\noindent The reason that makes it possible to solve this integral equation, is the spectral representation of $K$ obtained in \cite{kopi59}. Below we state the results of \cite{kopi59}. For $x, \zeta \in (-1,1)$ let

\begin{equation}
\sigma(x, \zeta) = \frac{\text{exp} \left\{ \tfrac{i}{2\pi} L(x) L(\zeta) \right\}}{\pi \sqrt{(1-x^2)(1-\zeta^2)}} , \qquad \qquad L(x) = \ln\left( \frac{1+x}{1-x}\right)
\end{equation}

\begin{theorem}
The formulae

\begin{equation*}
\begin{split}
f(x) =  \int_{-1}^1 g(\zeta) \sigma(x, \zeta) \D \zeta,
\qquad \qquad
g(\zeta) =  \int_{-1}^1 f(x) \overline{\sigma(x, \zeta)} \D x
\end{split}
\end{equation*}

\noindent are inversion formulae which represent isometries from the space $L^2(-1,1)$ to itself. 
\end{theorem}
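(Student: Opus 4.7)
My plan is to reduce both integral transforms to the one-dimensional Fourier transform on $\RR$, for which Plancherel's theorem and Fourier inversion are classical. The key observation is that $L(x)=\ln\frac{1+x}{1-x}$ is a diffeomorphism from $(-1,1)$ onto $\RR$ with inverse $x=\tanh(u/2)$, and the weight $\sqrt{1-x^{2}}=\sech(u/2)$ combines with the Jacobian $dx=\tfrac{1}{2}\sech^{2}(u/2)\,du$ in a way that is perfectly matched to the kernel $\sigma$ once a suitable change of variables and rescaling are performed.

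First I introduce the unitary map $U:L^{2}(-1,1)\to L^{2}(\RR)$ defined by $(Uf)(u)=f(\tanh(u/2))/[\sqrt{2}\cosh(u/2)]$; a direct change of variables shows $\|Uf\|_{L^{2}(\RR)}=\|f\|_{L^{2}(-1,1)}$ and that $U$ is surjective. Next I substitute $x=\tanh(u/2)$, $\zeta=\tanh(v/2)$ into the formula $f(x)=\int_{-1}^{1}g(\zeta)\sigma(x,\zeta)\,d\zeta$. A Jacobian bookkeeping calculation shows that the prefactor $\pi^{-1}(1-x^{2})^{-1/2}(1-\zeta^{2})^{-1/2}$ from $\sigma$ combines with the Jacobian $\tfrac{1}{2}\sech^{2}(v/2)\,dv$ for $d\zeta$ and the weights coming from the definition of $U$ to yield the clean identity
\begin{equation*}
(Uf)(u)=\int_{-\infty}^{\infty}(Ug)(v)\,\frac{e^{iuv/(2\pi)}}{2\pi}\,dv.
\end{equation*}
A further unitary rescaling $u=\sqrt{2\pi}\,s$, $v=\sqrt{2\pi}\,t$ converts this into the $L^{2}$-normalized inverse Fourier transform $\widehat f(s)=(2\pi)^{-1/2}\int_{\RR}\widehat g(t)\,e^{ist}\,dt$. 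A parallel computation shows that $g(\zeta)=\int_{-1}^{1}f(x)\,\overline{\sigma(x,\zeta)}\,dx$ becomes, under the same conjugation, the forward Fourier transform.

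The conclusion of the theorem then follows at once from Plancherel's theorem (so both transforms preserve $L^{2}$ norms) and Fourier inversion (so the two transforms are mutual inverses), transferred from $L^{2}(\RR)$ back to $L^{2}(-1,1)$ by conjugation with the unitary $U^{-1}$. The main obstacle I foresee is purely the Jacobian bookkeeping that produces the clean kernel $e^{iuv/(2\pi)}/(2\pi)$: the exponent $1/(2\pi)$ appearing in $\sigma$ is precisely calibrated to match the scaling factor $\sqrt{2\pi}$ in the change of variables $u=\sqrt{2\pi}\,s$, so any miscounting of half-powers of $\cosh(u/2)$ or of the normalization $1/\pi$ will break the identification with the Fourier transform. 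To handle integrability issues for the oscillatory integrals, I would first establish the identities on a dense class (such as $C^{\infty}_{c}(-1,1)$), and then extend to all of $L^{2}(-1,1)$ by continuity, using the isometry property itself.
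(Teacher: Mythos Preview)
Your argument is correct: the substitution $x=\tanh(u/2)$, $\zeta=\tanh(v/2)$ does convert the $\sigma$-transform into the Fourier transform conjugated by the unitary $U$, and all the Jacobian factors balance exactly as you describe. The density-and-extension strategy for handling the oscillatory integrals is also the right way to make the formal calculation rigorous.

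However, there is nothing to compare against: the paper does not prove this theorem. It is quoted verbatim as a result of Koppelman and Pincus (the reference \cite{kopi59}), introduced by ``Below we state the results of \cite{kopi59}'', and then used as a black box to diagonalize the operator $K$ in the subsequent computation of $u$. So your proposal is not an alternative to the paper's proof but rather a self-contained proof of a cited result. In that sense it goes beyond what the paper does; your reduction to Plancherel via the $\tanh$ change of variables is in fact essentially the idea behind the original Koppelman--Pincus derivation.
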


\begin{theorem}
If $f(x)$ corresponds to $g(\zeta)$, then $Kf(x)$ corresponds to $\zeta g(\zeta)$ (w.r.t. the above transformation).
\end{theorem}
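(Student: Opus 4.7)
The plan is to reduce the operator statement to the generalized eigenfunction identity
\[
K\sigma(\cdot,\zeta)(x) \;=\; \zeta\,\sigma(x,\zeta), \qquad x\in(-1,1),
\]
for each fixed $\zeta\in(-1,1)$. Once this is in hand, the theorem follows by representing $f(x)=\int_{-1}^1 g(\zeta)\sigma(x,\zeta)\,\D\zeta$ and interchanging $K$ with the $\zeta$-integral; the interchange is straightforward for $g\in C_c(-1,1)$, and by the $L^2$ isometry of the first theorem together with the $L^2$-boundedness of $K$ on $(-1,1)$, the general case follows by density. So the whole proof reduces to the eigenfunction identity.

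To prove that identity, I would use a Riemann--Hilbert argument based on the Sokhotski--Plemelj formulas. For $f\in L^2(-1,1)$ set $F(z)=\frac{1}{2\pi i}\int_{-1}^1 f(y)/(y-z)\D y$, analytic on $\mathbb{C}\setminus[-1,1]$ and vanishing at infinity. A direct calculation using $\tfrac{1}{y-x\mp i0}=\mathrm{p.v.}\tfrac{1}{y-x}\pm i\pi\delta(y-x)$ gives $F_+-F_-=f$ and $F_++F_-=Kf$, where $K$ is the operator in \eqref{K truncated HT}. Thus it suffices to exhibit a sectionally analytic $\Phi_\zeta(z)$ on $\mathbb{C}\setminus[-1,1]$, vanishing at infinity, with boundary values satisfying
\[
\Phi_+(x)-\Phi_-(x)=\sigma(x,\zeta), \qquad \Phi_+(x)+\Phi_-(x)=\zeta\,\sigma(x,\zeta),
\]
since by uniqueness (the difference with the Cauchy integral of $\sigma_\zeta$ is entire, bounded and zero at $\infty$) such a $\Phi_\zeta$ must coincide with the Cauchy integral of $\sigma_\zeta$, and the second identity is exactly $K\sigma_\zeta=\zeta\sigma_\zeta$.

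The candidate is
\[
\Phi_\zeta(z) \;=\; \frac{i}{2\pi\sqrt{z^2-1}}\left(\frac{z-1}{z+1}\right)^{-iL(\zeta)/(2\pi)},
\]
with both multi-valued factors made single-valued off $[-1,1]$ by taking $\sqrt{z^2-1}\sim z$ at infinity and the principal branch of $((z-1)/(z+1))^\alpha$. As $z\to\infty$ one has $\Phi_\zeta(z)\sim i/(2\pi z)\to 0$, so the asymptotic hypothesis is met. On the cut, $\sqrt{z^2-1}\big|_\pm=\pm i\sqrt{1-x^2}$ and $\arg((z-1)/(z+1))\big|_\pm=\pm\pi$, so with $\alpha=-iL(\zeta)/(2\pi)$ a short computation gives
\[
\Phi_\pm(x) \;=\; \pm\frac{e^{iL(x)L(\zeta)/(2\pi)}}{2\pi\sqrt{1-x^2}}\,e^{\pm L(\zeta)/2}.
\]
Using $\cosh(L(\zeta)/2)=1/\sqrt{1-\zeta^2}$ and $\sinh(L(\zeta)/2)=\zeta/\sqrt{1-\zeta^2}$, one then finds $\Phi_+-\Phi_-=\sigma(x,\zeta)$ and $\Phi_++\Phi_-=\zeta\,\sigma(x,\zeta)$, as required.

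The bookkeeping of the two branch factors is the main technical obstacle: one must fix the branches consistently, track their jumps across the cut, and pass to boundary values in the Plemelj sense. A secondary issue is justifying the interchange of $K$ with the $\zeta$-integral in the inversion formula $f=\int g(\zeta)\sigma(\cdot,\zeta)\D\zeta$; for $g\in C_c(-1,1)$ this follows from Fubini applied to the Cauchy integral of $Tg$, and for general $g\in L^2(-1,1)$ one extends by continuity using the isometry of Theorem~6.4 and boundedness of $K$ on $L^2(-1,1)$.
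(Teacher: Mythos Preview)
The paper does not prove this statement at all: both Theorems~6.4 and~6.5 are quoted verbatim from Koppelman--Pincus \cite{kopi59} (``Below we state the results of \cite{kopi59}''), so there is nothing in the paper to compare your argument against.

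Your argument is correct. The Sokhotski--Plemelj computation $F_+-F_-=f$, $F_++F_-=Kf$ is right with the paper's normalisation of $K$, and your explicit sectionally analytic function $\Phi_\zeta$ does the job: the boundary-value computation you sketch indeed yields $\Phi_+-\Phi_-=\sigma(\cdot,\zeta)$ and $\Phi_++\Phi_-=\zeta\,\sigma(\cdot,\zeta)$ once one uses $\cosh(L(\zeta)/2)=(1-\zeta^2)^{-1/2}$ and $\sinh(L(\zeta)/2)=\zeta(1-\zeta^2)^{-1/2}$. Two small points worth making explicit when you write it up: (i) $\sigma(\cdot,\zeta)\notin L^2(-1,1)$, so the ``eigenfunction identity'' is a pointwise statement for the principal-value integral, not an $L^2$ one---this is fine since $\sigma$ is smooth on $(-1,1)$ and only $(1-x^2)^{-1/2}$-singular at the endpoints; (ii) for the Liouville uniqueness step you need that both $\Phi_\zeta$ and the Cauchy integral of $\sigma_\zeta$ are $O(|z\mp1|^{-1/2})$ near $\pm1$, so the possible isolated singularities of their difference at $\pm1$ are removable. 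Both are routine. The density argument for passing from $g\in C_c(-1,1)$ to $g\in L^2(-1,1)$ via the isometry of Theorem~6.4 and the $L^2$-boundedness of $K$ is standard and correctly outlined.
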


\begin{remark}
Integrals are understood in a limiting sense as the Fourier transform of an $L^2$ function, namely as the limit of $\int_{-1+\delta}^{1-\delta}$ when $\delta \downarrow 0$ in the sense of $L^2(-1,1)$.
\end{remark}

\noindent Let $(\cdot, \cdot)$ denote the inner product of $L^2(-1,1)$, using the stated result we can write

\begin{equation*}
u(x) = \int_{-1}^1 \left(u, \sigma(\cdot, \zeta) \right) \sigma(x, \zeta) \D \zeta, \qquad \qquad 
p(x) = \int_{-1}^1 \left(p, \sigma(\cdot, \zeta) \right) \sigma(x, \zeta) \D \zeta
\end{equation*}

\begin{equation*}
Ku(x) = \int_{-1}^1 \zeta \left(u, \sigma(\cdot, \zeta) \right) \sigma(x, \zeta) \D \zeta
\end{equation*}

\noindent then the integral equation gives

\begin{equation*}
(\lambda + \zeta) \left(u, \sigma(\cdot, \zeta) \right) = 2 \left(p, \sigma(\cdot, \zeta) \right)
\end{equation*}

\noindent and therefore

\begin{equation} \label{u kopelman pincus}
u(x) = \int_{-1}^1 \frac{2\left(p, \sigma(\cdot, \zeta) \right) \sigma (x,\zeta)}{\lambda + \zeta} \D \zeta
\end{equation}

\noindent Let us compute $\left(p, \sigma(\cdot, \zeta) \right)$ explicitly by changing variables $y = \tanh(t)$, in which case $L(y) = 2t$. We obtain

\begin{equation*}
\left(p, \sigma(\cdot, \zeta) \right) = \frac{i}{\pi \sqrt{1-\zeta^2}} \int_{\RR} \frac{e^{-i t L(\zeta)/\pi}}{\sinh t - \overline{z} \cosh t} \D t
\end{equation*}

\noindent let $\alpha \in \bb{C}$ be such that $\coth \alpha = \overline{z}$, then

\begin{equation*}
\left(p, \sigma(\cdot, \zeta) \right) = -\frac{i \sinh \alpha}{\pi \sqrt{1-\zeta^2}} \int_{\RR} \frac{e^{-i t L(\zeta)/\pi}}{\cosh(t-\alpha)} \D t
\end{equation*}

\noindent We observe that

\begin{equation*}
\coth \alpha = \frac{e^{2\alpha} +1}{e^{2\alpha}-1} = \frac{w+1}{w-1}, \qquad \qquad w = e^{2\alpha}
\end{equation*}

\noindent The fractional linear map $w \mapsto \frac{w+1}{w-1}$ maps lower half-plane into the upper half-plane and therefore, $w = w(\overline{z})$ is in the upper half-plane. Hence, $\Im \alpha \in (0, \pi / 2)$. It follows that there are no zeros of $\cosh (t-\alpha)$ in the strip bounded by $\RR$ and $\Im t = \Im \alpha$. Taking into account that

\begin{equation*}
\lim_{R \to \infty} \int_0^{\Im \alpha} \frac{e^{-i(i\tau \pm R) L(\zeta) / \pi}}{\cosh (i\tau \pm R - \alpha)} i \D \tau = 0
\end{equation*}

\noindent we conclude that

\begin{equation*}
\left(p, \sigma(\cdot, \zeta) \right) = -\frac{i e^{-i \alpha L(\zeta)/\pi} \sinh \alpha}{\pi \sqrt{1-\zeta^2}} \int_{\RR} \frac{e^{-i t L(\zeta)/\pi}}{\cosh(t)} \D t
=
-\frac{i e^{-i \alpha L(\zeta)/\pi} \sinh \alpha}{\sqrt{1-\zeta^2} \cosh(L(\zeta) / 2)}
\end{equation*}

\noindent simplifying we obtain

\begin{equation*}
\left(p, \sigma(\cdot, \zeta) \right)
=
-i e^{-i \alpha L(\zeta)/\pi} \sinh \alpha
\end{equation*}

\noindent We now use this formula in \eqref{u kopelman pincus}.

\begin{equation*}
u(x) = -\frac{2i \sinh \alpha}{\pi \sqrt{1-x^2}} \int_{-1}^1 \frac{e^{i L(\zeta)[L(x)-2\alpha] / 2\pi}}{(\lambda + \zeta) \sqrt{1-\zeta^2}} \D \zeta
\end{equation*}

\noindent once again changing the variables $\zeta = \tanh s$ we obtain

\begin{equation*}
u(x) = -\frac{2i \sinh \alpha}{\pi \sqrt{1-x^2}} \int_\RR \frac{e^{i s[L(x)-2\alpha] / \pi}}{\sinh s + \lambda \cosh s} \D s
\end{equation*}

\noindent Let $\beta = \beta(\lambda)$ be such that $\coth \beta = \lambda$, then $\beta(\lambda) > 0$ and $\beta(\lambda) \to + \infty$, as $\lambda \to 1$. Now

\begin{equation*}
u(x) = -\frac{2i \sinh \alpha \sinh \beta}{\pi \sqrt{1-x^2}} \int_\RR \frac{e^{i s[L(x)-2\alpha] / \pi}}{\cosh (s+\beta)} \D s 
=
-\frac{2i \sinh \alpha \sinh \beta}{\sqrt{1-x^2}} \frac{e^{-i \beta [L(x)-2\alpha]/\pi}}{\cosh(L(x)/2 - \alpha)} 
\end{equation*}

\noindent Next we simplify

\begin{equation*}
\cosh\left( \tfrac{L(x)}{2} - \alpha \right) = \cosh\left( \tfrac{L(x)}{2} \right) \cosh \alpha - \sinh \left( \tfrac{L(x)}{2}\right) \sinh \alpha = \frac{\cosh \alpha - x \sinh \alpha}{\sqrt{1-x^2}}
\end{equation*}

\noindent Thus we obtain the final answer

\begin{equation} \label{u final answer}
u(x) = \frac{2i \sinh \beta}{x-\overline{z}} e^{-i \frac{\beta}{\pi} [L(x)-2\alpha]} = 2p(x) \sinh (\beta) e^{-i \frac{\beta}{\pi} [L(x)-2\alpha]}
\end{equation}

\noindent where (with $\ln$ denoting the principal branch of logarithm)

\begin{equation*}
\beta = \tfrac{1}{2} \ln \left(1 + \epsilon^{-2} \right),
\qquad \qquad \alpha = \tfrac{1}{2} \ln \frac{\overline{z}+1}{\overline{z} - 1}
\end{equation*}

\noindent We see that

\begin{equation*}
\|u\|_{L^2(-1,1)} = 2 \|p\|_{L^2(-1,1)} \sinh(\beta) e^{-2\frac{\beta}{\pi} \Im \alpha}
\end{equation*}

\noindent Because $\Re L(z) = 2 \Re \alpha$ and $e^\beta \sim \epsilon^{-1}$ as $\epsilon \to 0$, we find that

\begin{equation} \label{upper bound asymp boundary case}
\epsilon\frac{u(z)}{\|u\|_{L^2(-1,1)}} = \epsilon\frac{p(z) e^{\frac{\beta}{\pi} \Im L(z)}}{\|p\|_{L^2(-1,1)}} \sim \frac{p(z) \epsilon^{\frac{1}{\pi} [\pi - \Im L(z)]}}{\|p\|_{L^2(-1,1)}} =:B, \qquad \qquad \text{as} \ \epsilon \to 0
\end{equation}

\noindent Since $\frac{1+z}{1-z} \in \HH_+ $ we see that $\pi - \arg \frac{1+z}{1-z} = - \arg \frac{z+1}{z-1}$ and with $z = z_r + i z_i$ we obtain

\begin{equation}
B = \frac{\epsilon^{-\frac{1}{\pi} \arg \frac{z+1}{z-1}}}{2\sqrt{z_i} \sqrt{\arctan \frac{z_r + 1}{z_i} - \arctan \frac{z_r - 1}{z_i}}}
\end{equation}

\noindent when $\epsilon < 1$ we can replace the asymptotic equivalence to $B$ in \eqref{upper bound asymp boundary case} by $\leq \sqrt{2} B$ and conclude the proof of \eqref{main bound boundary explicit}. To prove the optimality of this upper bound we consider the function   

\begin{equation*}
W(\zeta) = \epsilon \frac{p(\zeta)}{\|p\|_{L^2(-1,1)}} e^{\frac{i \ln \epsilon}{\pi} \ln \frac{1+\zeta}{1-\zeta}}, \qquad \qquad \zeta \in \HH_+
\end{equation*}

\noindent clearly this is an analytic function in the upper half-plane and belongs to $H^2$, $\|W\|_{L^2(-1,1)} = \epsilon$ and 

\begin{equation*}
\|W\|_{H^2}^2 = \epsilon^2 + \frac{\|p\|_{L^2((-1,1)^C)}^2}{\|p\|_{L^2(-1,1)}^2} = \epsilon^2 - 1 + \frac{\pi}{\arctan \frac{z_r+1}{z_i} - \arctan \frac{z_r-1}{z_i}} \leq C
\end{equation*}

\noindent where $C>0$ is independent of $\epsilon$, therefore $W$ is an admissible function. Further,

\begin{equation*}
|W(z)| = B
\end{equation*}

\noindent that is, $W(\zeta)$ attains the bound \eqref{main bound boundary explicit} up to a constant independent of $\epsilon$.

\vspace{.2in}

\medskip

\noindent\textbf{Acknowledgments.}
The authors are grateful for the hospitality of Courant Institute, where part
of the work was done, while YG was a visiting member in the Spring 2018
semester. We have greatly benefited from long discussions with Percy Deift and
Bob Kohn about different approaches to the subject. Leslie Greengard provided
the quadruple precision FORTRAN code that enabled us to probe this otherwise
very ill-conditioned problem numerically. The authors thank Georg Stadler for
suggestions of related work on ill-posed problems. The authors also wish to
thank Alex Townsend for sharing his insights during his visit to Temple
University. Last, but not least, the authors are indebted to Mihai Putinar for
a lot of enlightening discussions about asymptotics of eigenvalues of integral
operators arising in analytic continuation problems, and for directing us to a
large trove of relevant literature. This material is based upon work supported
by the National Science Foundation under Grant No. DMS-1714287.


\bibliographystyle{abbrv}
\bibliography{refs}

\def\cprime{$'$} \ifx \cedla \undefined \let \cedla = \c \fi\ifx \cyr
  \undefined \let \cyr = \relax \fi\ifx \cprime \undefined \def \cprime
  {$\mathsurround=0pt '$}\fi\ifx \prime \undefined \def \prime {'}
  \fi\def\Ya{Ya}
\begin{thebibliography}{10}

\bibitem{aize12}
L.~A. Aizenberg.
\newblock {\em Carleman's formulas in complex analysis: theory and
  applications}, volume 244.
\newblock Springer Science \& Business Media, 2012.

\bibitem{akhi90}
N.~I. Akhiezer.
\newblock {\em Elements of the theory of elliptic functions}, volume~79.
\newblock American Mathematical Soc., 1990.

\bibitem{beto17}
B.~Beckermann and A.~Townsend.
\newblock On the {S}ingular {V}alues of {M}atrices with {D}isplacement
  {S}tructure.
\newblock {\em SIAM J. Matrix Anal. Appl.}, 38(4):1227--1248, 2017.

\bibitem{berg78}
D.~J. Bergman.
\newblock The dielectric constant of a composite material --- {A} problem in
  classical physics.
\newblock {\em Phys. Rep.}, 43:377--407, 1978.

\bibitem{cami65}
J.~Cannon and K.~Miller.
\newblock Some problems in numerical analytic continuation.
\newblock {\em Journal of the Society for Industrial and Applied Mathematics,
  Series B: Numerical Analysis}, 2(1):87--98, 1965.

\bibitem{capr74}
I.~Caprini.
\newblock On the best representation of scattering data by analytic functions
  in ${L}\sb{2}$-norm with positivity constraints.
\newblock {\em Nuovo Cimento A (11)}, 21:236--248, 1974.

\bibitem{capr79}
I.~Caprini.
\newblock Integral equations for the analytic extrapolation of scattering
  amplitudes with positivity constraints.
\newblock {\em Nuovo Cimento A (11)}, 49(3):307--325, 1979.

\bibitem{carl26}
T.~Carleman.
\newblock {\em Les Fonctions quasi analytiques: le{\c{c}}ons profess{\'e}es au
  Coll{\`e}ge de France}.
\newblock Gauthier-Villars et Cie, 1926.

\bibitem{cami17}
M.~Cassier and G.~W. Milton.
\newblock Bounds on herglotz functions and fundamental limits of broadband
  passive quasistatic cloaking.
\newblock {\em Journal of Mathematical Physics}, 58(7):071504, 2017.

\bibitem{dlp17}
P.~Degond, J.-G. Liu, and R.~L. Pego.
\newblock Coagulation--fragmentation model for animal group-size statistics.
\newblock {\em Journal of Nonlinear Science}, 27(2):379--424, Apr 2017.

\bibitem{deto18}
L.~Demanet and A.~Townsend.
\newblock Stable extrapolation of analytic functions.
\newblock {\em Foundations of Computational Mathematics}, 19(2):297--331, Apr
  2019.

\bibitem{digr01}
A.~Dienstfrey and L.~Greengard.
\newblock Analytic continuation, singular-value expansions, and kramers-kronig
  analysis.
\newblock {\em Inverse Problems}, 17(5):1307, 2001.

\bibitem{ekte76}
I.~Ekeland and R.~Temam.
\newblock {\em Convex analysis and variational problems}.
\newblock North-Holland Publishing Co., Amsterdam, 1976.
\newblock Translated from the French, Studies in Mathematics and its
  Applications, Vol. 1.

\bibitem{epst08}
C.~L. Epstein.
\newblock {\em Introduction to the mathematics of medical imaging}, volume 102.
\newblock Siam, 2008.

\bibitem{feyn64}
R.~P. Feynman, R.~B. Leighton, and M.~Sands.
\newblock {\em The {F}eynman lectures on physics. {V}ol. 2: {M}ainly
  electromagnetism and matter}.
\newblock Addison-Wesley Publishing Co., Inc., Reading, Mass.-London, 1964.

\bibitem{fran90}
J.~Franklin.
\newblock Analytic continuation by the fast fourier transform.
\newblock {\em SIAM journal on scientific and statistical computing},
  11(1):112--122, 1990.

\bibitem{fdfd09}
C.-L. Fu, Z.-L. Deng, X.-L. Feng, and F.-F. Dou.
\newblock A modified tikhonov regularization for stable analytic continuation.
\newblock {\em SIAM Journal on Numerical Analysis}, 47(4):2982--3000, 2009.

\bibitem{gesh64}
I.~M. Gelfand and G.~E. Shilov.
\newblock {\em Generalized functions, Vol. 4: applications of harmonic
  analysis}.
\newblock Academic Press, 1964.

\bibitem{gokr33}
G.~Goluzin and V.~Krylov.
\newblock A generalized carleman formula and its application to analytic
  continuation of functions.
\newblock {\em Mat. Sb}, 40(2):144--149, 1933.

\bibitem{gonc69}
A.~A. Gonchar.
\newblock Zolotarev problems connected with rational functions.
\newblock {\em Matematicheskii Sbornik}, 120(4):640--654, 1969.

\bibitem{grho-CEMP}
Y.~Grabovsky and N.~Hovsepyan.
\newblock On feasibility of extrapolation of the complex electromagnetic
  permittivity function using {K}ramer-{K}ronig relations.
\newblock in preparation.

\bibitem{grho-annulus}
Y.~Grabovsky and N.~Hovsepyan.
\newblock Explicit power laws in analytic continuation problems via reproducing
  kernel hilbert spaces.
\newblock {\em Inverse Problems}, 36(3):035001, feb 2020.

\bibitem{gps03}
B.~Gustafsson, M.~Putinar, and H.~S. Shapiro.
\newblock Restriction operators, balayage and doubly orthogonal systems of
  analytic functions.
\newblock {\em Journal of Functional Analysis}, 199(2):332--378, 2003.

\bibitem{koos98}
P.~Koosis.
\newblock {\em Introduction to $H_p$ spaces}.
\newblock Number 115 in Cambridge Tracts in Mathematics. Cambridge University
  Press, 1998.

\bibitem{kopi59}
W.~Koppelman and J.~Pincus.
\newblock Spectral representations for finite hilbert transformations.
\newblock {\em Mathematische Zeitschrift}, 71(1):399--407, 1959.

\bibitem{lali60:8}
L.~D. Landau and E.~M. Lifshitz.
\newblock {\em Electrodynamics of continuous media}, volume~8.
\newblock Pergamon, New York, 1960.
\newblock Translated from the Russian by J. B. Sykes and J. S. Bell.

\bibitem{lipt01a}
R.~Lipton.
\newblock An isoperimetric inequality for gradients of solutions of elliptic
  equations in divergence form with applicatuion to the design of two-phase
  heat conductors.
\newblock to appear in SIAM J. Math. Anal.

\bibitem{lipt01}
R.~Lipton.
\newblock Optimal inequalities for gradients of solutions of elliptic equations
  occuring in two-phase heat conductors.
\newblock preprint.

\bibitem{mill70}
K.~Miller.
\newblock Least squares methods for ill-posed problems with a prescribed bound.
\newblock {\em SIAM Journal on Mathematical Analysis}, 1(1):52--74, 1970.

\bibitem{mi80}
G.~W. Milton.
\newblock Bounds on complex dielectric constant of a composite material.
\newblock {\em Appl. Phys. Lett.}, 37(3):300--302, 1980.

\bibitem{parf78}
O.~Parfenov.
\newblock The asymptotic behavior of singular numbers of integral operators
  with cauchy kernel, and its consequences.
\newblock {\em Manuscript No. 2405-78 deposited at VINITI}, 1978.

\bibitem{parfenov}
O.~G. Parfenov.
\newblock Asymptotics of singular numbers of imbedding operators for certain
  classes of analytic functions.
\newblock {\em Mathematics of the {USSR}-Sbornik}, 43(4):563--571, apr 1982.

\bibitem{part97}
J.~R. Partington et~al.
\newblock {\em Interpolation, identification, and sampling}.
\newblock Number~17 in London Mathematical Society monographs (new series).
  Oxford University Press, 1997.

\bibitem{payne75}
L.~E. Payne.
\newblock {\em Improperly posed problems in partial differential equations},
  volume~22.
\newblock Siam, 1975.

\bibitem{rudin87}
W.~Rudin.
\newblock {\em Real and complex analysis}.
\newblock McGraw-Hill Book Co., New York, third edition, 1987.

\bibitem{sfjm18}
H.~Shim, L.~Fan, S.~G. Johnson, and O.~D. Miller.
\newblock Sum rules and power bandwidth limits to near-field optical response
  (conference presentation).
\newblock In {\em Active Photonic Platforms X}, volume 10721, page 107211I.
  International Society for Optics and Photonics, 2018.

\bibitem{trefe19}
L.~N. Trefethen.
\newblock Quantifying the ill-conditioning of analytic continuation.
\newblock {\em SIAM J. Numer. Anal.,}, 2019.

\bibitem{vese99}
S.~Vessella.
\newblock A continuous dependence result in the analytic continuation problem.
\newblock {\em Forum Mathematicum}, 11(6):695--703, 1999.

\bibitem{widom64}
H.~Widom.
\newblock Asymptotic behavior of the eigenvalues of certain integral equations.
  ii.
\newblock {\em Archive for Rational Mechanics and Analysis}, 17(3):215--229,
  1964.

\bibitem{zolo1877}
E.~Zolotarev.
\newblock Application of elliptic functions to questions of functions deviating
  least and most from zero.
\newblock {\em Zap. Imp. Akad. Nauk. St. Petersburg}, 30(5):1--59, 1877.

\end{thebibliography}
\end{document}